% user docs for compositio.cls
 % date this file last revised
    % version of compositio.cls documented

\documentclass{compositio}
% If you have the AMSLaTeX distribution installed on your system,
% please delete the "[noams]" option above.

\input{Dynkin.sty}

\usepackage{amsmath,amsfonts,amssymb,amsthm,amscd}
\usepackage{dsfont, mathrsfs,enumerate}
\usepackage{xspace}

\usepackage{eucal}

\usepackage[english]{babel}

\usepackage{xy, xypic}
\usepackage{color}
\usepackage{graphicx}
\usepackage{psfrag}

\usepackage{xcolor}
\definecolor{rouge}{rgb}{0.9,0.1,0}
\definecolor{bleu}{rgb}{0.1,0,0.9}
\definecolor{violet}{rgb}{0.7,0,0.8}
\usepackage[colorlinks=true,linkcolor=bleu,urlcolor=violet,citecolor=rouge]{hyperref}
\usepackage{breakurl}
\usepackage{xspace}

\newtheorem{thm}{Theorem}[section]
\newtheorem{lemma}[thm]{Lemma}

\newtheorem{prop}[thm]{Proposition}
\newtheorem{cor}[thm]{Corollary}
\theoremstyle{definition}
\newtheorem{defi}[thm]{Definition}
\newtheorem{rem}[thm]{Remark}
\newtheorem{question}[thm]{Question}
\newtheorem{ex}[thm]{Example}
\newtheorem{conj}[thm]{Conjecture}

\def\le{\leqslant}
\def\ge{\geqslant}

\def\F{\mathcal{F}}     % color

\def\L{\mathbb L}
\def\Z{\mathbb Z}
\def\Q{\mathbb Q}
\def\N{\mathbb N}
\def\R{\mathbb R}
\def\C{\mathbb C}

\def\A{\mathbb A}
\def\P{\mathbb P}
\def\O{\mathcal  O}      % \C [[ t ]]
\def\K{\mathcal K}      % \C ((t))

\def\D{\mathcal{D}}    % G-stable divisors
\def\M{\mathcal{M}_{\C}}   % localization of K...
\def\E{\mathcal{E}}    % stringy volume
\def\V{{\rm Var}_{\mathbb C}}    % category of cplx. alg. var.

\DeclareMathOperator{\Spec}{Spec}

\def\ord{\mathrm{ord}}   % order function

\def\root{\mathcal{R}}

\begin{document}

\title{The arc space of horospherical varieties and
motivic integration}

\author[Victor Batyrev]{Victor Batyrev}
\email{victor.batyrev@uni-tuebingen.de}
\address{Victor Batyrev, Mathematisches Institut, Universit\"at T\"ubingen,
 72076 T\"ubingen, Germany}

\author[Anne Moreau]{Anne Moreau}
\email{anne.moreau@math.univ-poitiers.fr}
\address{Anne Moreau, Laboratoire de Math{\'e}matiques et Applications,
 Universit{\'e} de Poitiers, France}

%
%\dedication{A dedication can be included here.}
\classification{14L30,14M27}
%least one subject code is required. Please refer to
%\url{http://www.ams.org/msc/} for a list of codes.}
\keywords{Horospherical variety, arc space, motivic integration, stringy invariant}

\begin{abstract}

For an arbitrary connected reductive group $G$ we consider the
motivic integral over the arc space of an arbitrary  $\Q$-Gorenstein
horospherical $G$-variety $X_\Sigma$ associated with
a colored fan $\Sigma$ and prove a formula for
the stringy $E$-function of $X_\Sigma$
which generalizes the one for toric varieties.
We remark that  in contrast to toric varieties the stringy $E$-function of
a Gorenstein horospherical variety $X_\Sigma$ may be not a polynomial
if some cones in $\Sigma$ have nonempty sets  of colors.
Using the stringy $E$-function, we can formulate and prove
a new smoothness criterion for locally factorial horospherical
varieties.
We expect that this smoothness criterion holds for
arbitrary spherical varieties.

\end{abstract}

\maketitle

\section*{Introduction}

Throughout the paper, we consider
algebraic varieties and algebraic groups
over the ground field $\C$.

\smallskip

Let $G$ be a connected reductive group and $H \subseteq G$ a closed subgroup.
The homogeneous space $G/H$ is called {\em horospherical} if $H$ contains
a maximal unipotent subgroup $U \subseteq G$.
In this case, the normalizer
$N_G(H)$ is a parabolic subgroup $P \subseteq G$ and $P/H$ is an algebraic
torus $T$.
The horospherical homogeneous space $G/H$ can be described
as a principal torus bundle with the fiber $T$ over the projective
homogeneous space  $G/P$.
The dimension $r$ of the torus $T$ is called
the {\it rank} of the horospherical homogeneous space $G/H$.
Let $M$ be the
lattice of characters of the torus $T$, and $N= {\rm Hom}(M, \Z)$ the
dual lattice.
According to the
Luna-Vust theory \cite{LV83}, any
$G$-equivariant embedding $G/H \hookrightarrow X$ of a
horospherical homogeneous
space $G/H$ can be described combinatorially by  a colored fan $\Sigma$ in
the $r$-dimensional vector space $N_\R:= N \otimes_\Z \R$.
In the case $H=U$,
$G$-equivariant embeddings of $G/U$ have been considered independently  by
Pauer \cite{Pau81,Pau83}.
Equivariant embeddings of horospherical homogeneous
spaces are generalizations of the well-known toric varieties which are
torus embeddings $T \hookrightarrow X$ ($G=T$, $H= \{e\}$).

Our paper is motivated by some known formulas for
stringy invariants of toric varieties.
Let $X$ be a $\Q$-Gorenstein toric variety defined by a fan
$\Sigma \subset N_\R$ and denote
by $|\Sigma| \subset N_\R$ its support.
Then there is a  piecewise linear function
$\omega_X\, : \, | \Sigma |  \to \R$ such that its restriction to every cone
$\sigma \in \Sigma$ is linear and $\omega_X$ has value $-1$ on all
primitive lattice generators of $1$-dimensional faces of $\sigma$.
It was shown in \cite{Ba98} that the
stringy $E$-function of the toric variety $X$ can be computed by the
formula
\begin{eqnarray} \label{Est-intro}
	E_{\rm st} (X ; u,v ) := \big( uv -1 \big)^r \sum_{n \in |\Sigma| \cap N} (uv)^{\omega_X(n)} .
\end{eqnarray}
If $X$ is smooth and projective, then the stringy $E$-function
%$E_{\rm st} (X ; u,v )$
of $X$ coincides with the usual $E$-function,
$$
	E(X ; u,v ) = \sum_{i = 1}^{ r} b_{2i} (X) (uv)^{i} \, ,
$$
where $b_{2i}(X)$ is the $2i$-th Betti number of $X$.
Using the decomposition of $X$ into torus orbits,
we can compute $E(X ; u,v)$ by the formula,
$$
	E (X ; u,v) = \sum\limits_{\sigma \in \Sigma} (uv - 1)^{r -\dim \sigma}
		 = \big(uv -1 \big)^r \sum\limits_{\sigma \in \Sigma}
		 	\displaystyle{\frac{(-1)^{\dim \sigma}}{(1 -uv)^{\dim \sigma}}} \, .
$$
Hence,
$$
	\sum_{n \in N} (uv)^{\omega_X(n)}
		= \sum\limits_{\sigma \in \Sigma}
		 	\displaystyle{\frac{(-1)^{\dim \sigma}}{(1 -uv)^{\dim \sigma}}}
		= (-1)^r P( R_\Sigma, uv)
		= (-1)^r \displaystyle{\frac{ \sum_{i = 1}^{ r} b_{2i} (X) (uv)^{i}}{(1 -uv)^r}} \, ,
$$
where $P (R_\Sigma, t) = \sum_{i \ge 0} \dim R_\Sigma^{i}\, t^{i}$
is the Poincar\'e series of the graded Stanley-Reisner ring
$R_\Sigma = \bigoplus_{i \ge 0} R_\Sigma^{i}$
associated with the fan $\Sigma$.

\smallskip

Recall the definition of the Stanley-Reisner ring
$R_\Sigma$.
Let $e_1,\ldots,e_s$ be the primitive integral generators
of all $1$-dimensional cones in $\Sigma$.
We consider the polynomial
ring $\C[z_1,\ldots,z_s]$ whose
the variables $z_1,\ldots,z_s$ are in
bijection to lattice vectors $e_1,\ldots,e_s$.
Then the {\em Stanley-Reisner ring} $R_\Sigma$ is
the quotient of $\C[z_1,\ldots,z_s]$
by the ideal generated by those square free monomials
$z_{i_1} \ldots z_{i_k}$
such that the lattice vectors
$e_{i_1} \ldots e_{i_k}$ do not generate any
$k$-dimensional cone in $\Sigma$.
The cohomology ring $H^\ast (X, \C)$ of
the smooth projective toric variety $X$ associated with $\Sigma$
is isomorphic to the quotient of $R_\Sigma$
modulo the ideal generated by a regular sequence $f_1, \ldots, f_r$
in $R_\Sigma^{1}$ (see e.g.\,\cite[Theorem 10.8]{D}).

\smallskip

In this paper, we prove a similar to (\ref{Est-intro}) formula
for any $\Q$-Gorenstein horospherical variety $X$ defined
by a colored fan $\Sigma$:
\begin{eqnarray} \label{Est-2}
	E_{\rm st} (X ; u,v ) := E(G/H ; u,v) \sum_{n \in |\Sigma| \cap N} (uv)^{\omega_X(n)} \, ,
\end{eqnarray}
where $\omega_X : |\Sigma| \to \R$ is a certain $\Sigma$-piecewise linear function
(cf.\,Theorem \ref{t:main}).
Let $X$ be a complete and locally factorial horospherical variety
defined by a colored cone $\Sigma$.
Let $e_1,\ldots,e_s$ be the primitive integral generators
of all $1$-dimensional cones in $\Sigma$.
Consider the positive integers $a_i := - \omega_X(e_i)$
for $i \in \{Ê1,\ldots, s\}$
and define the \emph{weighted Stanley-Reisner ring}
$R_\Sigma^w$
corresponding to the colored fan $\Sigma$
by putting $\deg z_i = a_i$
in the standard Stanley-Reisner ring $R_\Sigma$
(here we consider $\Sigma$ as an uncolored fan).
In Proposition \ref{p:SR}, we prove that
$$
	\sum_{n \in N} (uv)^{\omega_X(n)}
	= (-1)^r P(R_\Sigma^w, uv)
	= (-1)^r  \sum\limits_{\sigma \in \Sigma}
		\displaystyle{ \frac{(-1)^{\dim \sigma}}{ \prod_{e_i \in \sigma} \big(1 - (uv)^{a_i} \big)} } \, ,
$$
where $P(R_\Sigma^w, t)$ is the Poincar\'e series associated with
the weighted Stanley-Reisner ring $R_\Sigma^w$.
So we get
$$
	E_{\rm st} (X ; u,v ) = (-1)^r  E(G/H ; u,v) P(R_\Sigma^w, uv)  \, .
$$

In contrast to toric varieties, the stringy $E$-function of a locally factorial horospherical
variety $X$ needs not be a polynomial.
If $X$ is smooth, then $E_{\rm st}(X ; u,v) = E(X ; u,v)$
is polynomial and in particular the \emph{stringy
Euler number},  $e_{\rm st}(X) : = E_{\rm st} (X ; 1,1)$,
is equal to the usual Euler number $e(X) : = E(X ; 1,1)$.
If $X$ is a locally factorial horospherical variety
whose closed orbits are projective, then we show that
$e_{\rm st}(X) \ge e(X)$ and that the equality
 holds if and only if $X$ is smooth (cf.\,Theorem \ref{t:smo}).
We conjecture  that the equality
$$
	e_{\rm st}(X) =  e(X)
$$
can be used as a smoothness criterion for arbitrary
locally factorial spherical varieties (cf. Conjecture \ref{c:smo}).

\medskip

The key idea behind the formula (\ref{Est-2})
for toric varieties is the isomorphism
$$
	T({\mathcal K}) / T({\mathcal O}) \simeq N,
$$
where ${\mathcal O} := \C[[t]]$, ${\mathcal K} := \C((t))$ and
$T({\mathcal O})$ (resp.\,$T({\mathcal K})$) denotes the
set of ${\mathcal O}$-valued (resp.\,${\mathcal K}$-valued) points
in $T$.
We remark that the \emph{stringy motivic integral} over the arc space $X({\mathcal O})$
of a toric variety $X$ is equal to its restriction to the arc space
$T({\mathcal K})$.
The latter contains countably many
orbits of the maximal compact subgroup
$T({\mathcal O}) \subset T({\mathcal K})$ that are
parametrized by the elements $n$ of the lattice $N$.
The stringy motivic integral over a $T({\mathcal O})$-orbit corresponding
to an element $n \in N$ is equal to $(\L-1)^r \L^{\omega_X(n)}$
where $(\L-1)^r$ is the stringy motivic volume of the torus $T$
and $\L$ is the class of the affine line
in the Grothendieck ring K$_0({\rm Var}_\C)$ of algebraic varieties.
Our approach in the proof of the formula (\ref{Est-2})
is to use a more general bijection
$$
	G({\mathcal O}) \setminus (G/H)({\mathcal K}) \simeq N
 $$
which holds for any horospherical homogeneous space $G/H$,
see \cite{LV83} and \cite{GN10}.

\medskip

The paper is organized as follows.

\medskip

Section \ref{S:Mot}
contains a review of known facts about the spaces of arcs
of algebraic
varieties and their relation to motivic integrals
and stringy $E$-functions.
In Section \ref{S:Hor}, we collect basic results on horospherical
embeddings.
In Section \ref{S:Arc}, we prove that there is a bijection
between the quotient by $G(\O)$ of the intersection $X(\O) \cap  (G/H)(\K)$
and  the set of lattice points $|\Sigma| \cap  N$
for any horospherical $G/H$-embedding (cf.\,Theorem \ref{t:lattX}).
Section\,\ref{S:Est} is devoted to the formula
which expresses the stringy motivic volume
of any $\Q$-Gorenstein horospherical variety
as a sum over lattice points $n \in N \cap |\Sigma|$ (cf.\,Theorem \ref{t:main}).
We use this formula to obtain a smoothness criterion for locally factorial horospherical embeddings
in Section \ref{S:Smo}  (Theorem \ref{t:smo}).
Section \ref{S:App} contains some applications,
examples, open questions and a conjecture related to our results.

\medskip

\noindent
{\bf Acknowledgments}:
We would like to thank
M. Brion, B. Pasquier and D. Timashev for useful discussions and A. Szenes for his comments.
We are also indebted to the referee for his numerous and judicious comments
and his careful attention to our paper.

Our work was partially supported
by the DFG-project  "Geometrie und Kombinatorik von Toruswirkungen auf
algebraischen Variet\"{a}ten"
and by the ANR-project
10-BLAN-0110.

%%%%%%%%%%%%%%%%%%%%%%%%%%%%%%
%%%%%%%%%%%%%%%%%%%%%%%%%%%%%%
%%%%
%%%%
\section{Arc spaces, motivic integration and stringy motivic volumes}      \label{S:Mot}
%%%%
%%%%
%%%%%%%%%%%%%%%%%%%%%%%%%%%%%%
%%%%%%%%%%%%%%%%%%%%%%%%%%%%%%

Interesting invariants of a singular algebraic variety
$X$ can be obtained via
the nonarchimedean motivic integration over its space
of arcs  $\mathcal{J}_{\infty}(X)$.

\medskip

Here we recall the basic definitions on the arc space of an algebraic
variety and
refer the reader to \cite{DL99}, \cite{M01} or \cite{EM09} for
more details concerning  this topic.
Let $X$ be an algebraic variety  over $\C$.
For any $m \ge 0$, we denote by
$\mathcal{J}_m(X)$ the {\em $m\textrm{-}{\rm th}$ jet scheme} of $X$ over $\C$
whose $\C$-valued points are all morphisms of  schemes
$\Spec \C [ t ]/ (t^{m+1}) \to X.$
One has  $\mathcal{J}_0(X) = X$ and   $\mathcal{J}_1(X) = {\rm T}X$ is
the total space
of the tangent bundle over  $X$.
For $m \ge n$, the natural ring homomorphism
$\C[t]/ (t^{m+1})  \to \C[t]/(t ^{n+1})$
induces truncation morphisms
$$Ê\pi_{m,n} \, : \,  \mathcal{J}_m(X) \longrightarrow \mathcal{J}_n(X) . $$
The truncation morphisms
form a projective system
whose projective limit is an infinite dimensional
scheme $\mathcal{J}_\infty(X)$ over $\C$.
The scheme $\mathcal{J}_\infty(X)$ is called the {\em arc space} of $X$,
and the $\C$-valued points of $\mathcal{J}_\infty(X)$ are all morphisms
$\Spec \C [ [ t ]  ]  \to X .$
For each $m$, there is a natural  morphism
$$Ê\pi_m \, : \mathcal{J}_\infty(X) \longrightarrow \mathcal{J}_m(X) $$
induced by the ring homomorphism
$\C [ [ t ]  ]  \to \C[ [ t ] ]/ (t^{m+1}) \simeq
\C[ t ] /(t^{m+1}).$

\medskip

The motivic integration over the
arc space of a smooth variety is due to Kontsevich \cite{Ko95}.
One of its generalizations for singular varieties
was suggested by Denef and Loeser in \cite{DL99}.
Another generalization motivated by stringy invariants
was proposed in \cite{Ba98}; see also \cite{Cr} and~\cite{V}.

Let $\V$ be the category of complex algebraic varieties
and denote by K$_{0}(\V)$ the Grothendieck ring of $\V$.
For  an element $X$ in $\V$
we denote by $[X]$ its  class in K$_{0}(\V)$.
The symbol $\L$ stands for the class of the affine line $\A^1$
and we denote by $1$ the class of $\Spec \C$.
For example,
$$
	[Ê\mathbb{P}^n ] = \L^n + \L^{n-1} + \cdots + \L + 1  .
$$
The map $X \mapsto [X]$ naturally
extends to the category of constructible algebraic sets.
There is a natural function, $\dim : {\rm K}_0(\V) \to \Z \cup \{Ê\infty \}$,
which can be extended to the localization $\M := {\rm K}_{0}(\V)[\L^{-1}]$
of K$_{0}(\V)$ with respect to $\L$ simply
by setting $\dim(\L^{-1}) := -1$.
For any $m \in \Z$, set $F^m \M : = \{ \tau \in \M \ | \ \dim \tau \le m\}$.
Then $\{ F^m \M\}_{m \in \Z}$ is a decreasing filtration
of $\M$ and we denote by $\hat{\M}$ the separated completion of $\M$
with respect
to this filtration.

\medskip

Let  $X$ be a $d$-dimensional smooth variety.

\begin{defi}  \label{d:cyl}

A subset $C$ in $\mathcal{J}_\infty(X)$ is called {\it a cylinder} if
there are $m \in \N$
and a constructible subset $B_m \subseteq \mathcal{J}_m(X)$
such that $C = \pi_m^{-1}( B_m)$.
Such a set $B_m$ is called a {\em $m$-base} of $C$.

If $C \subseteq \mathcal{J}_\infty(X)$ is a cylinder
with $m$-base $B_m \subseteq \mathcal{J}_m(X)$,
we define its {\em motivic measure} $\mu_X(C)$ by
$$
	\mu _{X}(C) : = [ B_m ] \L^{- m d}
		= [\pi_m(C) ] \L^{- md} \ \in \, {\rm K}_0(\V).
$$

\end{defi}

This definition does not depends on $m$:
Indeed, because $X$ is smooth, the map
$$
	\pi_{n,m} \, : \,  \pi_{n}(C) \to \pi_m(C)
$$
is a locally trivial $\A^{(n-m)d}$-bundle for any $n \ge m$.
The collection
of cylinders forms an algebra of sets which means that $\mathcal{J}_\infty(X)$
is a cylinder and if $C,C'$ are cylinders, then also are
$\mathcal{J}_\infty(X) \smallsetminus  C$
and $C \cap C'$.
On the set on cylinders, the measure $\mu_X$ is additive on
finite disjoint unions.
Furthermore, for cylinders $C \subseteq C'$, one has
$\dim \mu_X(C) \le \dim \mu_X(C')$.

\begin{defi}

A subset $C \subset \mathcal{J}_\infty(X)$ is called {\em measurable}
if for all $n \in \N$ there is a cylinder $C_n$ and cylinders
$D_{n,i}$ for $i \in \N$ such that
$$
	C \bigtriangleup C_n \subseteq \bigcup_{i \in \N} D_{n,i}
$$
and $\dim \mu_X(D_{n,i}) \le - n$ for all $i$.
Here $C \bigtriangleup C_n = (C\smallsetminus C_n) \cup (C_n \smallsetminus C)$
denotes the symmetric difference of two sets.

If $C$ is measurable, we define its {\em motivic measure} $Ê\mu_X(C)$  by
$$
	\mu_X(C) :=	\lim_{n \to \infty} \mu_X(C_n) .
$$
This limit converges in $\hat{\M}$
and is independent of the $C_n$'s, cf.~\cite[Theorem 6.18]{Ba98}.

\end{defi}

\begin{prop} [{\cite[Prop.\,6.19 and 6.22]{Ba98}}]    \label{p:mot}

{\rm (i)} The measurable sets form an algebra of sets
and the motivic measure $\mu_X$ is additive on finite disjoint unions.
If $(C_i)_{i \in \N}$ is a disjoint sequence of measurable sets such that
$\lim_{i \to \infty} \mu_X(C_i) = 0$,
then $C: = \bigcup_{i\in\N} C_i$ is measurable and
$$\mu_X(C) = \sum_{i \in \N} \mu_X(C_i) .
$$

\smallskip

{\rm (ii)} Let $Y \subseteq X$ be a locally closed subvariety.
Then $\mathcal{J}_\infty(Y)$ is a measurable subset of $\mathcal{J}_\infty(X)$
and if $\dim Y < \dim X$ then $\mu_X(\mathcal{J}_\infty(Y))=0$.

\end{prop}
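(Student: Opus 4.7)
The plan is to reduce everything to manipulations with cylinders and the topology of $\hat{\M}$ given by the $\L$-adic dimension filtration, working separately on the two parts.

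For (i), the algebra of measurable sets: if $C, C' \subseteq \mathcal{J}_\infty(X)$ are measurable with cylindrical $n$-approximations $C_n, C'_n$, then $\mathcal{J}_\infty(X) \setminus C$ is approximated by $\mathcal{J}_\infty(X) \setminus C_n$ (itself a cylinder, since cylinders are closed under complement) with the same small error cylinders, and $C \cap C'$ is approximated by $C_n \cap C'_n$ using the set-theoretic inclusion $(C \cap C') \bigtriangleup (C_n \cap C'_n) \subseteq (C \bigtriangleup C_n) \cup (C' \bigtriangleup C'_n)$ to inherit the error bounds. Finite additivity on disjoint unions then follows from the corresponding property for $\mu_X$ on cylinders combined with continuity in $\hat{\M}$.

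For the countable clause, given disjoint measurable $C_i$ with $\mu_X(C_i) \to 0$, I would fix $n$ and choose $N_n$ so that $\dim \mu_X(C_i) \le -n$ for every $i \ge N_n$. By finite additivity, $U_n := \bigsqcup_{i < N_n} C_i$ is measurable with some cylindrical $n$-approximation $C_n^{\ast}$; the very same $C_n^{\ast}$ serves as an $n$-approximation of $C := \bigsqcup_i C_i$, since the additional symmetric difference $\bigsqcup_{i \ge N_n} C_i$ is covered by the countable union of the small cylinders absorbing each individual $C_i$ with $i \ge N_n$. The partial sums $\sum_{i < N} \mu_X(C_i)$ then form a Cauchy sequence in $\hat{\M}$ converging to $\mu_X(C)$.

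For (ii), I would split into the cases $Y$ open and $Y$ closed in $X$, recombining for general locally closed $Y$ via (i). If $Y$ is open, the set-theoretic identity $\mathcal{J}_\infty(Y) = \pi_0^{-1}(Y)$ (an arc in $X$ whose closed point lies in the open $Y$ automatically has its generic point in $Y$) exhibits $\mathcal{J}_\infty(Y)$ as a cylinder. If $Y$ is closed, write $\mathcal{J}_\infty(Y) = \bigcap_{m \ge 0} B_m$ where $B_m := \pi_m^{-1}(\mathcal{J}_m(Y))$ is a cylinder with $\mu_X(B_m) = [\mathcal{J}_m(Y)] \L^{-md}$. The dimension estimate $\dim \mathcal{J}_m(Y) \le (m+1) \dim Y$, valid for any reduced $Y$, then yields $\dim \mu_X(B_m) \le (m+1)\dim Y - md$, which tends to $-\infty$ as $m \to \infty$ whenever $\dim Y < d = \dim X$. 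Hence the $B_m$ realize $\mathcal{J}_\infty(Y)$ as a subset of arbitrarily small cylinders, establishing both its measurability and the vanishing $\mu_X(\mathcal{J}_\infty(Y)) = 0$. The main obstacle is the dimension bound $\dim \mathcal{J}_m(Y) \le (m+1)\dim Y$ for possibly singular reduced $Y$, which I would establish by Noetherian induction, stratifying $Y$ into finitely many smooth locally closed pieces $Y_i$ on which the truncation $\mathcal{J}_m(Y_i) \to Y_i$ is a Zariski-locally trivial $\A^{m \dim Y_i}$-bundle, and summing the resulting dimension contributions.
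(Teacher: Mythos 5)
Part (i) of your proposal is essentially the standard argument (the one in the cited source [Ba98, Prop.~6.19]) and is fine, modulo the routine point that comparing $\mu_X(C_n^\ast)$ with $\mu_X(U_n)$ and covering each tail piece $C_i$ by cylinders of dimension $\le -n$ both rest on the well-definedness machinery of [Ba98, Thm.~6.18], which the paper grants you.

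Part (ii), however, has a genuine gap in the closed case: the key inequality you rely on, $\dim \mathcal{J}_m(Y) \le (m+1)\dim Y$ for every reduced $Y$, is false for singular $Y$. Take $Y \subset X=\A^3$ the cone over a smooth plane quartic, $Y=\{f_4(x,y,z)=0\}$, so $\dim Y =2$. Any $m$-jet $(x(t),y(t),z(t))$ all of whose coordinates vanish to order $\ge \lceil (m+1)/4\rceil$ automatically satisfies $f_4\equiv 0 \bmod t^{m+1}$, and these jets form a family of dimension roughly $\tfrac94(m+1) > 2(m+1)$. (By Musta\c{t}\u{a}'s results, the bound you want characterizes log canonical singularities among l.c.i.\ varieties, so it cannot hold unconditionally.) Your proposed proof of the bound is flawed for the same underlying reason: for a stratification $Y=\bigsqcup_i Y_i$ one does \emph{not} have $\mathcal{J}_m(Y)=\bigcup_i \mathcal{J}_m(Y_i)$ --- an $m$-jet of $Y$ centered at a singular point is in general a jet of no stratum (e.g.\ a jet of a nodal curve at the node tangent to a branch is not a jet of the $0$-dimensional singular stratum), so summing bundle dimensions over strata does not bound $\dim\mathcal{J}_m(Y)$. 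The standard repair, and the route taken in [Ba98] and [DL99], is to replace $\mathcal{J}_m(Y)$ by the image $\pi_m(\mathcal{J}_\infty(Y))$ of the arc space, which is constructible by Greenberg's theorem; for this set the inequality $\dim \pi_m(\mathcal{J}_\infty(Y)) \le (m+1)\dim Y$ \emph{is} true, proved by Noetherian induction: arcs contained in $Y_{\rm sing}$ are handled by induction, while arcs not contained in $Y_{\rm sing}$ lift to a resolution $Y'\to Y$ by the valuative criterion, and $\dim\pi_m(\mathcal{J}_\infty(Y'))=(m+1)\dim Y$ since $Y'$ is smooth, images under morphisms not raising dimension. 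Since $\mathcal{J}_\infty(Y)\subseteq \pi_m^{-1}\bigl(\pi_m(\mathcal{J}_\infty(Y))\bigr)$ for all $m$, these cylinders have measures of dimension $\le (m+1)\dim Y - md \to -\infty$ when $\dim Y<\dim X$, which yields both the measurability of $\mathcal{J}_\infty(Y)$ and $\mu_X(\mathcal{J}_\infty(Y))=0$; your treatment of the open case and the reduction of the locally closed case via (i) are correct as written. (Whether your weaker needed statement $\dim\mathcal{J}_m(Y)-m\dim X\to-\infty$ holds in general is not addressed by your argument once the claimed lemma fails, so the proof as written does not go through.)
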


\begin{defi}

A function $F :  \mathcal{J}_\infty(X)  \to \Z \cup \{+\infty \}$
is called {\em measurable}
 if $F^{-1}(s)$ is measurable for all $s \in \Z \cup \{+\infty \}$.

\end{defi}
Let $A \subseteq \mathcal{J}_\infty(X)$ be a measurable set and
$F : \mathcal{J}_\infty(X) \to \Z \cup \{+\infty\}$ a measurable function
such that $\mu_X(F^{-1}(+\infty)) = 0$.
Then we set
$$	
	\int_{A} \L ^{-F} {\rm d}\mu_X :=
	\sum\limits_{s \in \Z} \mu_X( A \cap F^{-1}(s)) \L^{-s}
$$
%\qA{Why do we define the integral like that,
%$\L^F$, and not simply $F$?}
in $\hat {\M}$ whenever the right hand side converges in $\hat{\M}$.
In this case, we say that $\L^{-F}$ is {\em integrable on} $A$.
To any subvariety $Y$ of $X$, one associates the {\em order function}
$$Ê
	\ord_Y \, : \,  \mathcal{J}_\infty(X) \to \N \cup \{\infty \}
$$
sending an arc $\nu \in \mathcal{J}_\infty(X)$ to the order of
vanishing of $\nu$ along $Y$.
An important example of an integrable function is the  function
$\L^{-\ord_Y}$ where $Y$ is
a  smooth hypersurface in $X$.

\medskip

We consider now the case where $X$ is a singular
normal irreducible variety.
Let $K_X$ be a canonical divisor of $X$.
Assume that $X$ is $\Q$-Gorenstein,
that is $m K_X$ is Cartier for some $m \in \N$.
Let $f :  X' \to X$ be a resolution of singularities of $X$
such that the exceptional locus of $f$ is a divisor 	
whose irreducible components $D_1,\ldots,D_l$ are smooth divisors
with only normal crossings,
and set
$$Ê
	K_{X'/X} := K_{X'} - f^*K_X = \sum\limits_{i=1}^l \nu_i D_i,
$$
where the rational numbers $\nu_i$ $(1 \leq i \leq l)$ are called
the discrepancies of $f$.
The rational numbers $\nu_i$  $(1 \leq i \leq l)$ can be computed  as follows.
Since $m K_X$ is Cartier,
we can consider $f^* (m K_X)$ as a pullback of the Cartier divisor
and write
$$
	m K_{X'} - f^* (m K_X) = \sum\limits_{i=1}^l n_i D_i
$$
with $n_i \in \Z$ for all $i$.
Then $K_{X'/X}$ can be viewed as an abbreviation of
the $\Q$-divisor $\sum\limits_{i=1}^l \nu_i D_i$
where $\nu_i := \frac{n_i}{m} $ for all $i$.
Assume further that $X$ has at worst log-terminal singularities,
that is $\nu_i > -1$ for all $i$ (cf.\,\cite{KMM}).
Set $I :=\{1,\ldots, l \}$ and for any subset $J \subseteq I$,
$$
	D_J : =
		\left\lbrace
		\begin{array}{ll}
			\bigcap_{j\in J} D_J & \textrm{if } J\not=\varnothing \\
			Y  & \textrm{if } J=\varnothing
		 \end{array}
		\right.
	\quad \textrm{ and  }\quad
	D_J^0 : = D_J \smallsetminus \bigcup \limits_{j \in I \smallsetminus J} D_j.
$$

\begin{defi}    \label{d:Est}

We define the {\em stringy motivic volume} $\E_{\rm st}(X)$
of $X$ by
$$
	 \E_{\rm st}(X) := \sum\limits_{J \subseteq \{1, \ldots , l \}}
[D_J^0] \prod\limits_{j \in J}
		\, \displaystyle{\frac{  \L - 1 }{ \L^{\nu_j+1} - 1}}
\, \in \, \hat{\M} (\L^{\frac{1}{m}}) .
$$
(In \cite{V}, the element $\E_{\rm st}(X)$ is also called the
{\em stringy $\E$-invariant} of $X$.)

\end{defi}

\medskip

The inequality $\nu_i > -1$ for any $i$ implies that
the function $\ord_{K_{X'/X}} : = \sum_{i=1}^l \nu_i \, \ord_{D_i}$
is integrable on $\mathcal{J}_\infty(X')$, see \cite[Theorem 6.28]{Ba98}.
So we can express $\E_{\rm st}(X)$ as a motivic integral:

\begin{prop}      \label{p:Est}

$$
	\E_{\rm st}(X) =
		\int_{\mathcal{J}_\infty(X')} \L^{-\ord_{K_{X'/X}}} \, {\rm d}\mu_{X'}
		\, \in \, \hat{\M} (\L^{\frac{1}{m}}) .
$$

\end{prop}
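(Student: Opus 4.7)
The plan is to reduce the integral to a manifest sum by stratifying $\mathcal{J}_\infty(X')$ according to the multi-order of vanishing along the exceptional divisors $D_1,\ldots,D_l$. For each $\mathbf{k} = (k_1,\ldots,k_l) \in \N^l$, set $J(\mathbf{k}) := \{i \in I : k_i \ge 1\}$ and
$$
C_{\mathbf{k}} := \{\nu \in \mathcal{J}_\infty(X') \mid \ord_{D_i}(\nu) = k_i \text{ for all } i \in I\}.
$$
The first step is to show that each $C_{\mathbf{k}}$ is a cylinder (as soon as the truncation level $n$ satisfies $n \ge \max_i k_i$) and to compute its motivic measure.

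This is where the simple normal crossings structure of $\sum D_i$ is essential. Working locally in coordinates $x_1,\ldots,x_d$ around a point $p \in D_{J(\mathbf{k})}^0$ in which $D_j = \{x_j = 0\}$ for $j \in J(\mathbf{k})$, one notes that since $p$ lies in no $D_i$ for $i \notin J(\mathbf{k})$, the conditions $\ord_{D_i}(\nu) = 0$ are automatic. An $n$-jet in $\pi_n(C_{\mathbf{k}})$ is then described by $(\L - 1)\L^{n - k_j}$ free parameters for each coordinate $x_j$ with $j \in J(\mathbf{k})$ and $\L^n$ free parameters for each transverse coordinate. Globalising this local count over $D_{J(\mathbf{k})}^0$ and dividing by $\L^{nd}$ yields
$$
\mu_{X'}(C_{\mathbf{k}}) \;=\; [D_{J(\mathbf{k})}^0]\,(\L - 1)^{|J(\mathbf{k})|}\,\L^{-\sum_{j \in J(\mathbf{k})} k_j}.
$$
Since $\ord_{K_{X'/X}} = \sum_i \nu_i \ord_{D_i}$ is constant on $C_{\mathbf{k}}$ with value $\sum_i \nu_i k_i$, and the log-terminality hypothesis $\nu_i > -1$ forces $\dim\bigl(\mu_{X'}(C_{\mathbf{k}})\,\L^{-\sum_i \nu_i k_i}\bigr) \to -\infty$ as $\sum_i k_i \to \infty$, Proposition \ref{p:mot}\,(i) gives
$$
\int_{\mathcal{J}_\infty(X')}\L^{-\ord_{K_{X'/X}}}\,{\rm d}\mu_{X'} \;=\; \sum_{\mathbf{k} \in \N^l} \mu_{X'}(C_{\mathbf{k}})\,\L^{-\sum_i \nu_i k_i}
$$
with convergence in $\hat{\M}(\L^{1/m})$. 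Regrouping by $J = J(\mathbf{k})$ and summing the geometric series
$$
\sum_{k \ge 1}(\L - 1)\,\L^{-k(\nu_j + 1)} \;=\; \frac{\L - 1}{\L^{\nu_j + 1} - 1}
$$
over each $j \in J$ produces exactly $\sum_{J \subseteq I}[D_J^0]\,\prod_{j \in J}(\L - 1)/(\L^{\nu_j + 1} - 1) = \E_{\rm st}(X)$ from Definition \ref{d:Est}.

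The main technical point is the local measure computation on $C_{\mathbf{k}}$: one must verify that for $n \ge \max_j k_j$, the truncation $\pi_n(C_{\mathbf{k}})$ is a piecewise-trivial fibration over $D_{J(\mathbf{k})}^0$ whose class in $\mathrm{K}_0(\V)$ has the asserted form. The smoothness of $X'$ and the simple normal crossings property enter precisely at this step, ensuring that local étale coordinates with $D_j = \{x_j = 0\}$ exist and that the parameter count is uniform in $p$; everything subsequent is the formal structure of the motivic integral together with the geometric-series identity.
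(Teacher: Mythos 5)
Your proof is correct and is essentially the paper's argument: the paper simply delegates this computation to the cited reference \cite[Theorem 6.28]{Ba98}, whose proof is exactly your stratification of $\mathcal{J}_\infty(X')$ by the multi-order of contact with the simple normal crossings divisors, the measure computation $\mu_{X'}(C_{\mathbf{k}})=[D_{J(\mathbf{k})}^0](\L-1)^{|J(\mathbf{k})|}\L^{-\sum_{j}k_j}$, and the geometric series made convergent by $\nu_j>-1$. The only point worth adding is that arcs with $\ord_{D_i}=+\infty$ for some $i$ (i.e.\ arcs lying in $\mathcal{J}_\infty(D_i)$) are not covered by your strata, but they form a measure-zero set by Proposition \ref{p:mot}\,(ii), so they do not affect the integral.
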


The crucial point is that the above expressions of $\E_{\rm st}(X)$
do not depend on the chosen resolution,
see \cite[Theorem 3.4]{Ba98}.
This relevant fact essentially comes from the transformation rule
for motivic integrals, see \cite{DL99}.

\medskip

Recall that the \emph{$E$-polynomial} of an arbitrary
$d$-dimensional complex
algebraic variety $Z$ is defined by
$$
	E(Z ; u,v) := \sum_{ p,q=0 }^d \sum_{i=0}^{2d}
		(-1)^{i} h^{p,q} ({\rm H}^{i}_{c}(Z ; {\C})) u^{p}v^{q} \, ,
$$
where $h^{p,q} ({\rm H}^{i}_{c}(Z ; {\C}))$ ($0 \le i \le 2d$)
is the dimension
of the $(p,q)$-type Hodge component in the $i$-th cohomology group
${\rm H}^{i}_{c}(Z ; {\C})$ with compact support.
The polynomial $E$ has properties similar to the ones of the usual Euler characteristic.
In particular, the map $Z \mapsto E(Z ; u,v)$ factors through the ring K$_{0}(\V)$.
The map $Z \mapsto E(Z ; u,v)$ extends to $\M$ by setting $E(\L^{-1} ; u,v) := (uv)^{-1}$.
So, we get a map from $\M$ to $\Z[u,v, (uv)^{-1}]$
which uniquely extends to  $\hat{\M}$. %(see \cite[Prop.\,3.3]{Cr}).
This extension will be again denoted by $E$.

\smallskip

\begin{defi}    \label{d:Est2}

The {\em stringy $E$-function} of $X$ is given by (cf.\,\cite{Ba98}):
$$
	E_{\rm st}(X ; u , v)
 		:=  \sum\limits_{J \subseteq \{1, \ldots , l \}} E(D_J^0;u,v) \prod\limits_{j \in J}
		\displaystyle{\frac{  uv-1 }{ (uv)^{\nu_j+1} -1 }}  \, .
$$

\end{defi}

\noindent
Note that  $E_{\rm st}(X;u,v) = E(\E_{\rm st}(X) ; u,v )$.

\medskip

\begin{rem}

Whenever $X$ is smooth, then $\E_{\rm st}(X)=\mu_X(\mathcal{J}_\infty(X))=[X]$
and $E_{\rm st}(X ; u,v ) = E(X ; u,v )$.

\end{rem}

\bigskip

%%%%%%%%%%%%%%%%%%%%%%%%%%%%%%
%%%%%%%%%%%%%%%%%%%%%%%%%%%%%%
%%%%
%%%%
\section{Horospherical varieties}          \label{S:Hor}
%%%%
%%%%
%%%%%%%%%%%%%%%%%%%%%%%%%%%%%%
%%%%%%%%%%%%%%%%%%%%%%%%%%%%%%

In this section, we use our notations from the introduction: $G$ is a connected reductive group over $\C$,
$H \subset G$ is a closed horospherical subgroup, $G/H$ is the corresponding
horospherical homogeneous space,   $U$ is a maximal unipotent subgroup in
$G$ such that $U \subseteq H$, $B := N_G(U)$ is the corresponding Borel subgroup of $G$,
$P:=N_G(H)$ is a parabolic subgroup, $T:=P/H$ a $r$-dimensional algebraic torus, $M$ is
the group of characters of $T$, and $N:= {\rm Hom}(M, \Z)$.
%
%\begin{defi}
%
%Recall %from the Introduction
%that a closed subgroup $H \subset G$ is called {\em horospherical} if
%$H$ contains a maximal unipotent subgroup of $G$.
%If so, the corresponding homogeneous space .
%
%\end{defi}
%
%Choose a Borel subgroup $B$ of $G$ with  (horospherical) closed subgroup of $G$ containing $U$.
%Then, adopt the related notations as in the Introduction.

\smallskip

Let $S$ be the set of simple roots of $(G,B)$
with respect to a maximal torus of $B$.
There is a bijective map $I \mapsto P_I$
sending a subset $I$ of $S$ to the parabolic subgroup $P_I$ of $G$ containing $B$
such that $P_I = B W_I B$, where $W_I \subseteq W$ is the subgroup of the
Weyl group $W=W_S$ generated by the reflections $s_\alpha$ ($\alpha \in I)$. In particular,
 one has $P_\varnothing = B$ and $P_S=G$.
From now on, we denote by $I$ the subset of $S$ corresponding to $P:=N_G(H)$.
Let $U_0 \subset G/P$ be the open dense $B$-orbit.
Then $U_0$ is isomorphic to an affine space and the Picard
group of $G/P$ is free generated by
the classes $[\varGamma_\alpha]$
of irreducible
components $\{\varGamma_\alpha \ | \  \alpha \in S\smallsetminus I \}$
in the complement $(G/P) \smallsetminus U_0$. The space of global sections
$H^0( G/P, {\mathcal O}( \varGamma_\alpha))$ is an irreducible
representation of the universal cover of the semisimple group $G':= [G,G]$ corresponding to the fundamental weight $\varpi_\alpha$ associated with $\alpha \in S\smallsetminus I$.
Let $\phi \, : \, G/H \to G/P$ be the canonical surjective morphism whose fibers are isomorphic to
the torus $T$.
Then the divisors $\varDelta_\alpha := \phi^{-1}(\varGamma_\alpha)$,
for $\alpha \in S\smallsetminus I$,
are exactly the irreducible components in the complement to the
open dense $B$-orbit $\widetilde{U}_0 \simeq U_0 \times T$ in $G/H$.
The lattice $M$ can be identified with the group
$\C[\widetilde{U}_0]^* / \C^*$
of invertible
regular functions over $\widetilde{U}_0$ modulo nonzero constant functions.

\begin{defi}
A normal $G$-variety $X$ is said to be {\em horospherical}
if $G$ has an open orbit in $X$ isomorphic to the horospherical homogeneous space $G/H$.
In that case, $X$ is also called a {\em $G/H$-embedding}.
\end{defi}

Horospherical varieties are special examples of
spherical varieties.
%This fact results from the Bruhat decomposition.
%
According to the Luna-Vust theory \cite{LV83}, any
%$G$-equivariant
$G/H$-embedding $X$
can be described by  a colored fan $\Sigma$ in
the $r$-dimensional vector space $N_\R:= N \otimes_\Z \R$.
Our basic reference for spherical varieties is \cite{K91}.
For recent accounts about horospherical varieties,
see also \cite[Chap.\,1]{Pa07} or \cite[Chap.\,5]{Ti10}.

Let $X$ be a horospherical $G/H$-embedding.
Each irreducible divisor $D$ in $X$ defines a valuation $v_D : \C(X)^* \to \Z$
on the function field $\C(X)$ which vanishes on $\C^*$.
The restriction of $v_D$ to the lattice $M \simeq \C[\widetilde{U}_0]^* / \C^*$
yields an element $\varrho_D$ of the dual lattice $N$.

Let ${\mathcal X}(P)$ be the character group of the parabolic subgroup
$P=P_I$. This group
can be identified with the set of all characters $\chi \in {\mathcal X}(B)$ of the Borel
subgroup $B$
such that $\langle \chi, \check{\alpha} \rangle =0$
for all $\alpha \in I$ where $\check{\alpha} \in {\rm Hom}({\mathcal X}(B), \Z)$ denotes the coroot corresponding to $\alpha$.
Since every character of $P$ induces a line bundle
over $G/P$,  we get a homomorphism ${\mathcal X}(P) \to
{\rm Pic}(G/P)$.
Its composition with the monomorphism of character
groups $M \to {\mathcal X}(P)$,
induced by the epimorphism $P \to T = P/H$,
gives a homomorphism
$\delta \, : \, M  \to {\rm Pic}(G/P)$.
Let $\delta^* \, : \, {\rm Pic}(G/P)^* \to  N$ be the dual map.
Then, the lattice points
$\{\varrho_{\varDelta_\alpha} \ | \ \alpha \in S \smallsetminus I \} \subset N$
corresponding to the divisors $\varDelta_\alpha \subset X$,
$\alpha \in S \smallsetminus I$,
are exactly the $\delta^*$-images of the dual basis to $\{[\varGamma_\alpha] \ | \  \alpha \in S \smallsetminus I \}$
in $ {\rm Pic}(G/P)^*$.
For simplicity, we set $\varrho_\alpha := \varrho_{\varDelta_\alpha}$
for any $\alpha \in S \smallsetminus I$. We note that $\varrho_\alpha$ is equal to the restriction
to the sublattice $M \subseteq {\mathcal X}(B)$ of the corresponding
coroot $\check{\alpha}$.

Let  $\D_X = \{D_1, \ldots, D_t \}$ be the set of $G$-stable irreducible divisors of $X$.
For any divisor $D_i$, we denote by $\varrho_i$ the lattice point $\varrho_{D_i} \in N$.
Thus, we get a map
$$
	\varrho  \, : \,  \{\varDelta_\alpha \ | \ \alpha \inÊS \smallsetminus I \}
		\cup \D_X \to N
$$
which sends $\varDelta_\alpha$ ($\alpha \in S \smallsetminus I$)  to $\varrho_\alpha$
and $D_i \in \D_X$ ($1 \le i \le t$) to $\varrho_i$.
The restriction of $\varrho$ to $\D_X$ is injective,
but in general the restriction of $\varrho$ to $\{Ê\varDelta_\alpha \ | \ \alpha \inÊS \smallsetminus I \}$ is not injective.

\medskip

Let $Z$ be a $G$-orbit in $X$.
Denote by $X_Z$ the union of all $G$-orbits in $X$ which contain $Z$ in their closure.
Then $X_Z$ is open in $X$. Moreover, $X_Z$ is  a $G/H$-embedding having $Z$ as a unique
closed $G$-orbit. Such a  $G/H$-embedding is called {\em simple}.
It is well-known that any simple embedding is quasi-projective.
This fact follows from a result of Sumihiro \cite[Lemma 8]{Su74} which
states that
any normal $G$-variety is covered by $G$-invariant
quasi-projective open subsets  (if $X$ is a simple embedding
of $G/H$ with closed $G$-orbit $Y$, then any $G$-stable open neighborhood
of $Y$ in $X$ is the whole $X$).
The {\em colored cone corresponding to }$Z$ is the pair $(\sigma_{Z},\F_{Z})$ where
$\F_{Z}$ is the set $\{ \alpha \in S \smallsetminus I \ | \
\overline{\varDelta_\alpha} \supset Z\}$
and $\sigma_{Z}$ is the convex cone in  $N_\R$ generated by
$\{Ê\varrho_\alpha \ | \ \alpha \in \F_{Z} \}$ and
$\{ \varrho_i \ | \ D_i \supset Z \}$.
The {\em colored fan $\Sigma$ of $X$} is
the collection of the colored cones $(\sigma_{Z} , \F_{Z})$
where $Z$ runs through the set of $G$-orbits of $X$.
We call $\F : = \bigcup \, \F_{Z}$ the set of {\em colors of $X$}.

The set of colored cones in the colored fan $\Sigma$
is a partially ordered set:
We write $(\sigma',\F') \leq (\sigma,\F)$
and call $(\sigma',\F')$ a {\em face} of $(\sigma,\F)$
if $\sigma'$ is a face of $\sigma$
and $\F'   = \{ \alpha \in \F \ | \ \varrho_\alpha \in \sigma' \}$.
On the other hand, we have a partial order on the set of orbits,
$\big( Z \leq  Z' \iff Z \subseteq \overline{Z'} \big)$,
and the map $Z \mapsto (\sigma_{Z},\F_{Z})$ is an order-reversing bijection
between the set of orbits of $X$ and
the set of colored cones,~\cite{K91}.
Denote by $Z_{\sigma,\F}$ the $G$-orbit of $X$ corresponding
to $(\sigma,\F)$.
The open orbit $G/H$ corresponds to the cone $(0,\varnothing)$.

A arbitrary  pair $(\sigma,\F)$ consisting of a convex rational polyhedral
cone $\sigma \subset N_\R$
and a subset $\F \subset S \smallsetminus I$
is said to be a {\em strictly convex colored cone}
if $\sigma$ is strictly convex (i.e. $-\sigma \cap \sigma$ =0)
and if $\varrho_\alpha$ is a nonzero element in $\sigma$ for any $\alpha \in \F$.
A {\em colored fan} $\Sigma \subset N_\R$ is a collection of strictly convex
colored cones such that all faces of any colored cone $(\sigma, \F) \in
\Sigma$ belong to $\Sigma$
and the intersection of two colored cones is a common face of both cones, \cite[Section 3]{K91}.
The following result
was proved by Luna-Vust
in a more general context, \cite[Proposition 8.10]{LV83}
(see also \cite[Theorem 3.3]{K91}):

\begin{thm}   \label{t:bij}

The correspondence $X \to \Sigma$
is a bijection between $G$-equivariant isomorphism classes of $G/H$-embeddings
$X$ and colored fans $\Sigma$ in $N_\R$.

\end{thm}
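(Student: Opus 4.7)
The plan is to reduce to the case of simple embeddings, handle that case via the ring of $B$-semi-invariant rational functions on $G/H$, and then assemble the general statement by a gluing argument. For the reduction, I would observe that for any $G/H$-embedding $X$ with colored fan $\Sigma$, the open subsets $\{X_Z\}$ indexed by the $G$-orbits of $X$ form an open cover by simple embeddings whose colored cones are exactly the colored cones of $\Sigma$, with the face order on $\Sigma$ matching the inclusion order among the $X_Z$. Conversely, starting from a colored fan $\Sigma$, I would produce a simple embedding for each $(\sigma,\F) \in \Sigma$ and glue them along the open immersions coming from face relations; the condition that any two colored cones intersect in a common face is exactly what makes the gluing data satisfy the cocycle condition and yield a separated $G$-variety.

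For the simple case, the key input is that, since $G/H \to G/P$ is a principal $T$-bundle and $\widetilde{U}_0 \simeq U_0 \times T$ is open and $B$-stable, every nonzero $B$-semi-invariant in $\C(G/H)$ is, up to a scalar, of the form $f_m$ for some $m \in M$, and any $G$-invariant discrete valuation of $\C(G/H)$ is determined by the map $m \mapsto v(f_m) \in \Z$, i.e.\ by an element of $N$. Given a simple $G/H$-embedding $X$ with closed orbit $Z$, Sumihiro's lemma combined with the uniqueness of the closed orbit produces a canonical $B$-stable affine open subset $X_0 \subset X$ obtained by removing all $B$-stable prime divisors that do not meet $Z$; its algebra of regular functions admits a $B$-semi-invariant basis $\{f_m \mid m \in \sigma_Z^{\vee} \cap M\}$, where $\sigma_Z$ is the cone spanned by the $\varrho_\alpha$ for $\alpha \in \F_Z$ and the $\varrho_i$ for $D_i \supset Z$. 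Since $X = G \cdot X_0$, this description recovers $X$ from $(\sigma_Z,\F_Z)$ up to $G$-equivariant isomorphism, which gives injectivity.

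For surjectivity, starting from a strictly convex colored cone $(\sigma,\F)$, I would construct a $P$-equivariant affine scheme $X_0$ whose coordinate ring is the $P$-stable $M$-graded subalgebra of $\C[M]$ selected by $\sigma$, twisted by the line bundles on $G/P$ corresponding to the colors in $\F$, and then define $X$ as the image of the natural map from $G \times^P X_0$. The main obstacle will be verifying that this construction produces a normal irreducible $G$-variety with exactly the expected orbit stratification and with colored cone $(\sigma,\F)$: strict convexity of $\sigma$ ensures that $X_0$ has a unique $T$-fixed point, hence $X$ has a unique closed $G$-orbit, while the requirement $\varrho_\alpha \in \sigma$ for $\alpha \in \F$ forces $\overline{\varDelta_\alpha}$ to pass through that closed orbit, which identifies $\F$ as the set of colors of the resulting simple embedding. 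Once both directions are established for simple embeddings, the gluing procedure from the first paragraph completes the proof.
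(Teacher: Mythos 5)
The paper does not actually prove Theorem \ref{t:bij}: it is quoted from Luna--Vust \cite[Proposition 8.10]{LV83} and Knop \cite[Theorem 3.3]{K91}, so there is no internal argument to compare with. Your outline follows the standard route of those references — reduction to simple embeddings, classification of simple embeddings via $B$-semi-invariant functions, valuations and a $B$-stable affine open $X_0$, then gluing along faces; and your remark that the fan condition alone gives separatedness is indeed correct in the horospherical case, because the valuation cone of $G/H$ is all of $N_\R$.

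However, two steps contain genuine gaps. First, in the injectivity argument the claim that $\C[X_0]$ ``admits a $B$-semi-invariant basis $\{f_m \mid m\in\sigma_Z^\vee\cap M\}$'' is false: already for $G=SL_2$, $H=U$, $X=X_0=\A^2$ the ring $\C[x,y]$ is not spanned by its $B$-eigenvectors $x^d$. What is true is only that the $B$-semi-invariant rational functions regular on $X_0$ are, up to scalar, the $f_m$ with $m\in\sigma_Z^\vee\cap M$; to get injectivity you must show that the \emph{whole} algebra $\C[X_0]$ (and hence $X=G\cdot X_0$) is determined by $(\sigma_Z,\F_Z)$, for instance by identifying $\C[X_0]$ with the intersection of the valuation rings attached to the $B$-stable prime divisors of $X_0$ inside $\C(G/H)$ — this is the actual content of the Luna--Vust/Knop argument and is missing from your sketch. (A smaller slip: $X_0$ is obtained by removing the $B$-stable prime divisors that do not \emph{contain} $Z$, not those that do not meet $Z$; a color outside $\F_Z$ can still meet a projective closed orbit.) Second, the surjectivity step is not carried out: ``twisting by the line bundles corresponding to the colors'' is not yet a construction, and verifying that it produces a normal simple $G/H$-embedding whose colored cone is exactly $(\sigma,\F)$ — which you yourself flag as the main obstacle — is precisely the nontrivial half of the existence statement. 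As written, the proposal is a correct road map for the known proof rather than a proof.
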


We denote by $X_\Sigma$ the $G$-equivariant $G/H$-embedding
corresponding to a colored fan $\Sigma \subset N_{\R}$.
For simplicity, we denote $X_\Sigma$ by  $X_{\sigma,\F}$
whenever $\Sigma$ has only one maximal colored cone $(\sigma,\F)$.
The latter happens if and only if $X$ has a unique closed $G$-orbit,
i.e., $X$ is simple.

A horospherical $G/H$-embedding $X$
whose fan $\Sigma$ has no colors is said to be {\em toroidal}.
There is a simple method to construct a toroidal horospherical
variety associated with the (uncolored) fan $\Sigma$.
One considers the toric $T$-embedding
$Y_\Sigma$ with fan $\Sigma$.
Using the canonical epimorphism $P \to T$
we can consider $Y_\Sigma$ as a $P$-variety.
Then $X_\Sigma$ is isomorphic to the quotient
space $(G \times Y_\Sigma )/P$
where the action of $P$
on $G \times Y_\Sigma$
is given by
$p(g,y) := (g p^{-1}, py)$
for any $p \in P$, $g \in G$ and $y \in Y_\Sigma$.
One has a natural surjective morphism
$\phi : X_\Sigma \to G/P$
whose fibers are isomorphic to the toric variety $Y_\Sigma$
and $X \simeq X_\Sigma$.
Over the open dense $B$-orbit $U_0$ in $G/P$ the fibration
$\phi : \phi^{-1}(U_0) \to U_0$ is trivial.
Every toroidal horospherical variety is obtained as $(G_\Sigma \times Y)/P$
for a unique toric variety $Y_\Sigma$.
Moreover, $X_\Sigma$ is simple if and only if $Y_\Sigma$ is affine.

\smallskip

Each horospherical variety is dominated by a toroidal variety
in the following sense \cite[$\negmedspace$\S3.3]{Br91}:

\begin{prop}    \label{p:dom}

For any horospherical $G$-variety $X$,
there is a toroidal $G$-variety $\tilde{X}$ and a proper birational $G$-equivariant morphism
$$f \, :  \, \tilde{X}  \to X  .$$

\end{prop}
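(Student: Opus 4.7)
The plan is to construct $\tilde X$ explicitly as the toroidal $G/H$-embedding associated with an uncolored subdivision of the colored fan of $X$, and then to verify that the induced morphism has the required properties via Theorem \ref{t:bij}. Denote by $\Sigma = \{(\sigma_k,\F_k)\}$ the colored fan of $X = X_\Sigma$ and by $\F := \bigcup_k \F_k \subseteq S \smallsetminus I$ its set of colors.

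The first step is to build an (uncolored) fan $\tilde\Sigma$ in $N_\R$ with the following three properties: (a) $|\tilde\Sigma| = \bigcup_k \sigma_k$; (b) every cone of $\tilde\Sigma$ is contained in some $\sigma_k$; (c) for each $\alpha \in \F$, the ray $\R_{\ge 0}\varrho_\alpha$ is an edge of every cone of $\tilde\Sigma$ whose relative interior meets $\R_{> 0}\varrho_\alpha$, so that no $\varrho_\alpha$ lies in the relative interior of a cone of $\tilde\Sigma$. Such a subdivision can be produced by successive star subdivisions of each $\sigma_k$ along the rays $\R_{\ge 0}\varrho_\alpha$ with $\alpha \in \F_k$, followed by any further simplicial refinement if desired; these are elementary operations in convex geometry and preserve the support.

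The second step is to regard $\tilde\Sigma$ as a colored fan all of whose color sets are empty. By Theorem \ref{t:bij} this colored fan defines a toroidal $G/H$-embedding $\tilde X := X_{\tilde\Sigma}$. Condition (b) is exactly the combinatorial requirement for the existence of a morphism of colored fans from $\tilde\Sigma$ to $\Sigma$ in the Luna--Vust framework (cf.\,\cite{K91}); condition (c) guarantees that removing the colors is compatible with this morphism, because no color $\alpha \in \F_k$ is forced upon the preimage of a cone. Thus the identity on the open orbit $G/H$ extends to a $G$-equivariant morphism $f : \tilde X \to X$.

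The third step is to check the remaining two properties of $f$. Birationality is immediate, since both $\tilde\Sigma$ and $\Sigma$ contain the trivial cone $(0,\varnothing)$, corresponding in both cases to the open $G$-orbit $G/H$, on which $f$ is the identity. Properness follows from the equality $|\tilde\Sigma| = |\Sigma|$ in (a), together with the valuative characterization of proper morphisms of spherical embeddings in terms of supports of colored fans (\cite[Section 4]{K91}). The main technical point is the construction in step one, and more specifically the justification in step two that discarding the colors is admissible once their rays have been pushed onto the boundary of the cones: this is the content of the morphism criterion for colored fans, and is the step where one must appeal in earnest to the Luna--Vust correspondence rather than to bare combinatorics.
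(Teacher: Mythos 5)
Your construction is correct, but it is more elaborate than what the paper does, and the extra work is driven by a condition that the Luna--Vust formalism does not actually require. The paper (citing \cite[\S3.3]{Br91}) takes $\tilde{X}$ to be the embedding defined by the \emph{same} cones of $\Sigma$ with all colors simply deleted, no subdivision at all: since $G/H$ is horospherical, the valuation cone is all of $N_\R$, so any strictly convex (uncolored) cone is admissible, and the decolorized collection is already a colored fan; the morphism criterion of \cite{K91} then applies because a colored cone $(\sigma',\varnothing)$ maps to $(\sigma_k,\F_k)$ as soon as $\sigma'\subseteq\sigma_k$ --- the color condition $\varnothing\subseteq\F_k$ is vacuous. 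This is why your condition (c), and the star subdivisions along the rays through the $\varrho_\alpha$, are superfluous: nothing about the position of the $\varrho_\alpha$ relative to the cones of $\tilde\Sigma$ enters the existence of $f$, and your own argument really only uses (a) and (b). Your appeal to (a) plus Knop's support criterion for properness, and to the common open orbit for birationality, is fine (again using that the valuation cone is everything, so supports need not be intersected with it). What the paper's minimal choice buys is that $\tilde\Sigma$ and $\Sigma$ have literally the same support and cones, and the concrete description $\tilde{X}=(G\times Y)/P$ with $Y=\overline{T}\subset X$, which is what is used later (e.g.\ in the proofs of Theorem \ref{t:lattX} and Proposition \ref{p:orb}); your subdivided version is closer in spirit to the resolution $X'\to X$ introduced after Proposition \ref{p:dom}, where one does remove colors \emph{and} subdivide, but for the present statement the subdivision, and the claim that it is what makes discarding the colors admissible, should be dropped.
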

To obtain this toroidal variety $\tilde{X}$,
we just need to remove all colors from all colored cones in the fan of $X$.
It is worth mentioning that $\tilde{X} = (G \times Y )/P$, where $Y$ denotes the closure of $T$ in $X$.

In general, the toroidal variety $\tilde{X}$ is not smooth, but its singularities are locally isomorphic
to toric singularities.
In the sequel, it will useful to use a resolution of singularities
$f'  \, :  \, X'  \to X$, where $f'$
if a proper birational $G$-equivariant morphism
and where $X'$ is  a smooth toroidal $G$-equivariant embedding
with (uncolored) fan $\Sigma'$
obtained from $\Sigma$ by removing colors in all colored cones of $\Sigma$
and subdividing them into subcones generated by parts of $\Z$-bases of the lattice  $N$.
Note that the fans $\Sigma'$ and $\Sigma$ share the same support $|\Sigma|$.

\medskip

Next proposition describes the stabilizer of $G$-orbits
$Z_{\sigma,\F}$ in the horospherical case:

\begin{prop}    \label{p:orb}
Let $X$ be a horospherical $G/H$-embedding where $P:=N_G(H) = P_I$ is
the parabolic subgroup corresponding to a subset $I \subseteq S$.
Consider a colored cone  $(\sigma,\F) \in \Sigma$  $(\F \subseteq S \setminus I)$.
Define the sublattice $M_\sigma  : = M \cap \sigma^\perp$ consisted of
all elements in $M$ that are orthogonal to $\sigma \subset N_\R$.
Then every element $m \in M_\sigma$ defines a character $\chi_m$ of the
parabolic subgroup $P_{I \cup \F}$, and
the closed $G$-orbit $Z_{\sigma,\F} \subseteq X_{\sigma,\F}$
is isomorphic to $G/ H_{\sigma,\F}$
where
\[ H_{\sigma,\F} := \{ g \in P_{I \cup \F} \; | \;
\chi_m(g) =1 \;\; \forall m \in M_\sigma \}. \]
In particular, one has:
$$ \dim Z_{\sigma,\F} = {\rm rk} \, M_\sigma + \dim G/P_{I \cup \F}.$$
\end{prop}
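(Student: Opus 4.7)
The plan is to first verify that each $m \in M_\sigma$ yields a character of $P_{I \cup \F}$, and then to identify the closed orbit $Z_{\sigma, \F}$ with $G/H_{\sigma, \F}$ by reducing the general colored case to the toroidal case via the dominating morphism of Proposition~\ref{p:dom}.

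For the character extension, the quotient $P_I \twoheadrightarrow T = P_I/H$ induces an embedding $M = \mathcal{X}(T) \hookrightarrow \mathcal{X}(P_I) \subseteq \mathcal{X}(B)$. Recall from the text that $\mathcal{X}(P_J) = \{\chi \in \mathcal{X}(B) : \langle \chi, \check{\alpha}\rangle = 0 \text{ for all } \alpha \in J\}$. For $m \in M$, the vanishing on $\check{\alpha}$ with $\alpha \in I$ is automatic. For $\alpha \in \F$, I use the identity $\varrho_\alpha = \check{\alpha}|_M$ (recalled earlier in Section~\ref{S:Hor}) together with $\varrho_\alpha \in \sigma$, which follows from the very definition of the colored cone $(\sigma,\F)$. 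Hence $m \in \sigma^\perp$ gives $\langle m, \check{\alpha}\rangle = \langle m, \varrho_\alpha\rangle = 0$, so that $\chi_m$ extends to a character of $P_{I \cup \F}$.

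To identify the closed orbit, I first settle the toroidal case $\F = \varnothing$ using the construction $\tilde{X}_\sigma = (G \times Y_\sigma)/P_I$ recalled in Section~\ref{S:Hor}, where $Y_\sigma$ is the affine toric $T$-variety with cone $\sigma$. Its closed $T$-orbit is $\Spec \C[M \cap \sigma^\perp] \simeq T/T_0$ with $T_0 = \{t \in T : \chi_m(t) = 1 \text{ for all } m \in M_\sigma\}$, so the closed $G$-orbit of $\tilde{X}_\sigma$ is $G/\tilde{H}_\sigma$ where $\tilde{H}_\sigma \subset P_I$ is the preimage of $T_0$; this is exactly $H_{\sigma, \varnothing}$.

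For the general colored case, I would use the dominating morphism $f: \tilde{X}_\sigma \to X_{\sigma, \F}$ from Proposition~\ref{p:dom}. The key observation is that for each $\alpha \in \F$ the divisor $\Delta_\alpha$ does not contain the closed orbit of $\tilde{X}_\sigma$ but does contain $Z_{\sigma, \F}$, so that contracting these $B$-stable divisors enlarges the stabilizer of the closed orbit from a subgroup of $P_I$ to a subgroup of $P_{I \cup \F}$. Making this precise via a local structure theorem for horospherical embeddings --- namely, that a $B$-stable affine open neighborhood of $Z_{\sigma, \F}$ is stable under the parabolic opposite to $P_{I \cup \F}$ --- one sees that the stabilizer of a point of $Z_{\sigma, \F}$ lies in $P_{I \cup \F}$ and, combined with the toroidal computation, is exactly $H_{\sigma, \F}$. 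The dimension formula then follows since $P_{I \cup \F}/H_{\sigma, \F} \simeq \Spec \C[M_\sigma]$ is a torus of rank $\mathrm{rk}\, M_\sigma$. The main obstacle is the rigorous control of how the stabilizer is enlarged along the colored divisors: the dominating morphism $f$ fails to be an isomorphism precisely there, so one needs the local structure theorem to transfer stabilizer information from $\tilde{X}_\sigma$ to $X_{\sigma, \F}$.
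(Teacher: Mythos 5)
Your first two steps are fine and coincide with the paper's: the extension of $\chi_m$ to $P_{I\cup\F}$ via $\varrho_\alpha=\check\alpha|_M\in\sigma$, and the toroidal case via $(G\times Y_\sigma)/P_I$ with closed $T$-orbit $T/T_0$, giving $Z_{\sigma,\varnothing}\cong G/H_{\sigma,\varnothing}$. The problem is the colored case, which is the actual content of the proposition, and there your argument has a genuine gap that you yourself flag as ``the main obstacle''. You assert that a local structure theorem gives a $B$-stable affine neighborhood of $Z_{\sigma,\F}$ stable under the parabolic opposite to $P_{I\cup\F}$, and that from this ``one sees'' the stabilizer is exactly $H_{\sigma,\F}$. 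But identifying the parabolic attached to the closed orbit as $P_{I\cup\F}$ is essentially equivalent to the statement being proved (it is exactly the point where the colors enter), so as written the key step is assumed rather than established. Moreover, even granting that the stabilizer $H$ of a point of $Z_{\sigma,\F}$ has normalizer $P_{I\cup\F}$, you still must show that the sublattice of $\mathcal X(P_{I\cup\F})$ cutting out $H$ is precisely $M_\sigma$ and not something larger or smaller; ``combined with the toroidal computation'' does not do this, because the fibers of $Z_{\sigma,\varnothing}\to Z_{\sigma,\F}$ have to be controlled.

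For comparison, the paper closes exactly this gap without any local structure theorem, by exploiting properness of the decolorization $f\colon X_{\sigma,\varnothing}\to X_{\sigma,\F}$: the induced map $Z_{\sigma,\varnothing}\to Z_{\sigma,\F}$ is proper with connected fibers isomorphic to $H_{\sigma,\F}/H_{\sigma,\varnothing}$; setting $H':=H_{\sigma,\F}\cap P$, the quotient $H'/H_{\sigma,\varnothing}$ is a diagonalizable subgroup of the torus $P/H_{\sigma,\varnothing}$ which is connected and proper, hence trivial, so $H_{\sigma,\F}\cap P=H_{\sigma,\varnothing}$; the correspondence between sublattices of $\mathcal X(P)$ and closed subgroups of $P$ containing $[P,P]$ then identifies the character lattice of $N_G(H_{\sigma,\F})/H_{\sigma,\F}$ with $M_\sigma$; finally $N_G(H_{\sigma,\F})=P_J$ for some $J\supseteq I$, and $J=I\cup\F$ is determined by checking, from the definition of colors, that $\overline{\Delta_\alpha}\supseteq Z_{\sigma,\F}$ if and only if $\alpha\in J$ (using that $Z_{\sigma,\F}$ is a torus fibration over $G/P_J$). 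If you want to salvage your route, you would need to state the local structure theorem precisely and actually compute with it the pair (parabolic, character lattice) attached to $Z_{\sigma,\F}$; otherwise the argument along the lines of the paper is the more economical way to make the ``enlargement of the stabilizer along the colored divisors'' rigorous.
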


\begin{proof}
First of all we recall that the nonzero elements
$\varrho_\alpha$ $(\alpha \in {\mathcal F})$
are the restrictions of the coroots $\check{\alpha}$ to the sublattice
$M \subseteq {\mathcal X}(B)$.
Since  $\varrho_\alpha \in \sigma$ for all $\alpha
\in {\mathcal F}$, the restriction of the coroot $ \check{\alpha}$ to $M_\sigma$
is zero for all $\alpha \in {\mathcal F}$.
The inclusions $M_\sigma \subseteq M \subseteq {\mathcal X}(P_I)$ imply
that the restriction of the coroot $\check{\alpha}$ to $M_\sigma$
is zero for all $\alpha \in I$, too.
Hence, we can consider the elements of $M_\sigma$ as characters of $B$ that
extend to the parabolic subgroup $P_{I \cup \F}$.

Without loss of generality, we can assume that $X = X_{\sigma,\F}$ is the simple
horospherical $G/H$-embedding corresponding to a colored
cone $(\sigma,\F)$.
Consider the  proper birational $G$-equivariant morphism
$f : X_{\sigma,\varnothing} \to X_{\sigma,\F}$
where $X_{\sigma,\varnothing}$ is the simple  toroidal variety
associated with the uncolored cone $({\sigma,\varnothing})$,
i.e.,  $X_{\sigma,\varnothing}$ is exactly the variety
$\widetilde{X_{\sigma,\F}}$ in the notations of Proposition \ref{p:dom}.

Then the toroidal simple horospherical variety $X_{\sigma,\varnothing}$
is a fibration
over $G/P$ with the affine toric fiber $Y_\sigma$. We remark
that  $f$ induces a bijection
between the set of $G$-orbits in $X_{\sigma,\varnothing}$ and
 the set of $G$-orbits in $X_{\sigma,\F}$.
It immediately follows
from the theory of toric varieties that the closed $T$-orbit
$Z_\sigma$ in $Y_\sigma$
is isomorphic to $T/T_{\sigma}$ where the subtorus $T_\sigma
\subseteq T$ is the kernel
of characters of $T$ in the sublattice $M_\sigma = M \cap \sigma^\perp$
of $M$.
Moreover, $Z_{\sigma, \varnothing}:= f^{-1}(Z_{\sigma, \F})$ is
 the closed $G$-orbit in $X_{\sigma,\varnothing}$ which is
isomorphic to $G \times_P (T/T_\sigma) $.  
%
% $(G \times_P (T/T_\sigma)$
%%%%%%% I prefer to use this above notation as before.
%
This implies that
 the closed $G$-orbit
 $Z_{\sigma, \varnothing}$ is
isomorphic to $G/H_{\sigma,\varnothing}$ where
\[ H_{\sigma,\varnothing} := \{ g \in P=P_{I} \; | \;
\chi_m(g) =1 \;\; \forall m \in M_\sigma \}. \]

Let $z_0 \in Z_{\sigma, \varnothing}$ be a point with the stabilizer
$H_{\sigma,\varnothing}$. Then the stabilizer of $f(z_0) \in Z_{\sigma,\F}$
is a subgroup $H_{\sigma,\F} \subseteq G$ containing $H_{\sigma,\varnothing}$ so that we 
have the isomorphism $Z_{\sigma,\F} \cong
G/H_{\sigma,\F}$. 
We remark that all fibers of the proper birational $G$-equivariant morphism
$f : X_{\sigma,\varnothing} \to X_{\sigma,\F}$ are  connected
and proper.  In particular, $f$ induces a proper $G$-equivariant surjective
morphism of the $G$-orbits  $Z_{\sigma, \varnothing} \to  Z_{\sigma,\F}$
whose fibers are connected proper algebraic varieties isomorphic to
$H_{\sigma,\F}/H_{\sigma,\varnothing}$. 
Since the horospherical subgroup $H_{\sigma,\F}$ contains the horospherical subgroup $H_{\sigma,\varnothing}$, 
the normalizer $N_G( H_{\sigma,\F}) = : P_1$  contains the normalizer $N_G(H_{\sigma,\varnothing})=P$. 
%
%$H_{\sigma,\varnothing}$ is contained in the horospherical subgroup $H_{\sigma,\F}$, the normalizer $N_G(H_{\sigma,\varnothing})=P$ is contained
%in the normalizer $N_G( H_{\sigma,\F}) =P_1$ 
%(we give a proof of this fact at the end). 
%
Indeed, we have that $H_{\sigma,\varnothing} \supseteq [P,P]$ since $P/H_{\sigma,\varnothing}$ is commutative. 
It follows that $P_1 = B[P_1,P_1] = B H_{\sigma,\F} = P H_{\sigma,\F} \supseteq P$. 
Let $H'$ be the
intersection  $H_{\sigma,\F} \cap P$. The inclusions
\[H_{\sigma,\varnothing}  \subseteq H' \subseteq H_{\sigma,\F} \]
enable to decompose the proper morphism $f \, : \, G/H_{\sigma,\varnothing} \to
G/H_{\sigma,\F}$ into the composition of two proper morphisms with
connected fibers:
\[ f_1 \, : \, G/H_{\sigma,\varnothing} \to
G/H', \;\;    \;\; f_2 \, : \,G/H' \to  G/H_{\sigma,\F}. \]
The inclusions
$$[P,P] \subseteq H_{\sigma,\varnothing}  \subseteq H' \subset P$$
imply that the fibers of $f_1$ are isomorphic to a diagonalisable
subgroup $H'/H_{\sigma,\varnothing}$ in the torus $P/H_{\sigma,\varnothing}$.
But $H'/H_{\sigma,\varnothing}$ is connected and proper only if it consists
of one point, i.e., we get $H' :=H_{\sigma,\F} \cap P =
H_{\sigma,\varnothing}$. 
Let $M_1 \subset {\mathcal X}(P_1)$
be the sublattice of all characters of $P_1$  that vanish on
$H_{\sigma,\F}$. Since $P/[P,P]$ is a torus with the group 
of characters  ${\mathcal X}(P)$, it follows from the properties 
of diagonalisable groups that there exists one-to-one correspondence 
between the sublattices in  the group of characters ${\mathcal X}(P)$ 
and the closed subgroups in $P$ containing $[P,P]$. Therefore, 
the equality $H_{\sigma,\F} \cap P =
H_{\sigma,\varnothing}$ and the injectivity of the restriction map ${\mathcal X}(P_1) \to
{\mathcal X}(P)$  imply that   $M_1$ is also the
sublattice of all characters of $P$  that vanish on
$H_{\sigma,\varnothing}$, i.e., we get the equality $M_1 =M_\sigma$.
%
%The injectivity of the restriction map ${\mathcal X}(P_1) \to
%{\mathcal X}(P)$ 
%and the equality of quotient groups
%$$P/H_{\sigma,\varnothing} = P/P\cap H_{\sigma,\F} 
%= P H_{\sigma,\F}/ H_{\sigma,\F} = P_1 / H_{\sigma,\F}, $$
%where the final equality holds since $P_1 = PH_{\sigma,\F}$, 
%implies that $M_1 =M_\sigma$. 

It remains to show that $P_1 = {P_{I \cup \F}}$. Since $P_1$ contains $P=P_I$, 
we get $P_1=P_J$ for some subset $J \subseteq S$ containing 
$I$. 
Let $\alpha \in S \smallsetminus I$. 
By the definition  of the set of colors $\F$,
the simple root $\alpha$ belongs to $\F$
if and only if the closure of the $B$-invariant divisor
$\Delta_\alpha:= \phi^{-1}(\varGamma_\alpha) \subset G/H$ 
in  $X_{\sigma, \F}$
contains the closed orbit $Z_{\sigma, \F} \subseteq X_{\sigma, \F}$.
On the other hand, the horospherical homogeneous $G$-space
$Z_{\sigma, \F}$ is a torus fibration over $G/P_J$,
and the intersection
of a closed $B$-invariant divisor
$\overline{\Delta_\alpha} \subset X_{\sigma, \F}$
with the closed $G$-orbit $Z_{\sigma, \F}$ is either a closed
$B$-invariant divisor  in  $Z_{\sigma, \F}$ (which projects to
a $B$-invariant divisor  in $G/P_J$), or the whole $G$-orbit
$Z_{\sigma, \F}$. 
The latter implies that $\overline{\Delta_\alpha}$ contains 
$Z_{\sigma, \F}$ (i.e., $\alpha \in \F)$ 
if and only if $\alpha \in J$. So we obtain  
$J = I  \cup \F$. 

%in $G/P_1$ is the set $\{ \varGamma_\alpha \, | \, \alpha \in S \setminus ( I
%\cup \F) \}$, hence $P_1 =  P_{I \cup \F}$. 
%because we have obtained a bijection
%between 
%
%corresponds the set of all roots $\alpha \in S \setminus ( I
%\cup \F)$. 

%Finally, we have to explain why an inclusion of two horospherical
%subgroups $H_1 \subseteq H_2$ implies the inclusion
%for the corresponding parabolic subgroups $P_1:= N_G(H_1) \subseteq P_2:=
%N_G(H_2)$.  Let $U \subset H_1$ be a maximal unipotent
%subgroup contained in $H_1$, and let $B:= N_G(U)$ be its normalizer. Then
%$B \subseteq P_1$, $B \subseteq P_2$ and $[P_1,P_1]
%\subseteq H_1$, $[P_2,P_2] \subseteq H_2$ (e.g. see the proof
%of Thm. 3.2 in \cite{KK11}). So we obtain $B[P_1,P_1] \subseteq P_2$.
%Since the subgroup $B[P_1,P_1]$ contains $P_1$ we get $P_1 \subseteq P_2$.

\end{proof}

\bigskip

%%%%%%%%%%%%%%%%%%%%%%%%%%%%%%
%%%%%%%%%%%%%%%%%%%%%%%%%%%%%%
%%%%
%%%%
\section{Arcs spaces of horospherical varieties}     \label{S:Arc}
%%%%
%%%%
%%%%%%%%%%%%%%%%%%%%%%%%%%%%%%
%%%%%%%%%%%%%%%%%%%%%%%%%%%%%%

Let $\K := \C ( ( t ) )$ be the field of formal Laurent series,
and let $\O := \C [ [ t ] ]$ be the ring of formal power series.
If $X$ is a scheme of finite type over $\C$,
denote by $X(\K)$ and $X(\O)$ the sets of $\K$-valued points
and $\O$-valued points of $X$ respectively.
Remark that the set $X(\O)$ coincides
with the set of $\C$-points of the scheme $J_\infty(X)$.
If $X$ is a normal variety admitting an action of an algebraic group $A$,
then $X(\K)$ and $X(\O)$ both admit a canonical action of the group $A(\O)$
induced from the $A$-action on $X$.

\smallskip

The following result can be viewed as a generalization
in a slightly different context of \cite[\S8.2]{GN10}
(see also \cite{LV83} or \cite{Do09}):

\begin{thm}   \label{t:lattX}

Let $X$ be a horospherical $G/H$-embedding
defined by a colored fan $\Sigma$.
We consider the two sets $X(\O)$ and $(G/H)(\K)$
as subsets of $X(\K)$.
Then there is a surjective map
$$ \mathcal{V} \, : \, X(\O) \cap
(G/H)(\K)  \longrightarrow  |\Sigma| \cap  N
$$
whose fiber over any $n \in |\Sigma| \cap N$ is precisely one $G(\O)$-orbit.
In particular, we obtain
a one-to-one correspondence between the lattice points in $|\Sigma| \cap N$
and the $G(\O)$-orbits in $X(\O) \cap (G/H)(\K)$.

\end{thm}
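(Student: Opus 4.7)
The plan is to combine the horospherical Cartan decomposition with the toroidal dominating variety of Proposition \ref{p:dom} and standard toric theory.

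First, I would define the map $\mathcal{V}$. By the Cartan-type decomposition for the loop space of a horospherical homogeneous space, due to Luna-Vust \cite{LV83} and made explicit by Gaitsgory-Nadler \cite{GN10}, every $\gamma \in (G/H)(\K)$ is $G(\O)$-equivalent to an arc of the form $\lambda_n(t)\cdot e_0$ for a unique $n \in N$, where $e_0 \in G/H$ denotes the base point and $\lambda_n : \G_m \to T \hookrightarrow G/H$ is the one-parameter subgroup attached to $n$. Setting $\mathcal{V}(\gamma) := n$ then yields a well-defined surjective map $\mathcal{V}: (G/H)(\K) \to N$ whose fibers are precisely the $G(\O)$-orbits. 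Since $X$ is $G$-stable, $X(\O)$ is $G(\O)$-stable, hence $X(\O) \cap (G/H)(\K)$ is a union of $G(\O)$-orbits, so the restriction of $\mathcal{V}$ to it has fibers that are either empty or whole $G(\O)$-orbits.

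Next, it remains to identify the image as $|\Sigma| \cap N$, i.e., to show that for every $n \in N$ the standard arc $\gamma_n : \Spec \K \to G/H$, $t \mapsto \lambda_n(t) \cdot e_0$, extends to an $\O$-point of $X$ if and only if $n \in |\Sigma|$. For this I use the proper birational $G$-equivariant morphism $f: \tilde{X} \to X$ from the toroidal dominating variety (Proposition \ref{p:dom}), whose fan $\tilde{\Sigma}$ is obtained from $\Sigma$ by forgetting colors and in particular has the same support in $N_{\R}$. By the valuative criterion of properness applied to $f$ and to the $\K$-point $\gamma_n \in (G/H)(\K) \subseteq \tilde{X}(\K)$, the arc $\gamma_n$ extends across $\Spec \O$ into $X$ if and only if it extends into $\tilde{X}$. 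But $\tilde{X} = (G \times Y_{\tilde{\Sigma}})/P$ contains the toric $T$-variety $Y_{\tilde{\Sigma}}$ as the closure of $T \hookrightarrow G/H$ in $\tilde{X}$, and the arc $\gamma_n$ factors through $Y_{\tilde{\Sigma}}(\K)$. Classical toric theory then tells us that $\gamma_n$ lies in $Y_{\tilde{\Sigma}}(\O)$ iff $n \in |\tilde{\Sigma}| = |\Sigma|$, which delivers the desired surjectivity and finishes the proof.

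The principal technical obstacle is the first step, namely the Cartan decomposition for horospherical spaces. Morally it stems from the factorization $G(\K) = G(\O) \cdot T(\K) \cdot H(\K)$ together with the torus identification $T(\K)/T(\O) \simeq N$, but the cleanest formulation and proof, including the needed uniqueness up to $G(\O)$, is the one recorded in \cite{GN10}. Once that input is granted, the remainder of the argument is a tidy application of the valuative criterion and a reduction to toric geometry via the toroidal dominating variety; in particular, the presence of colors in $\Sigma$ plays no role here, since it affects only the combinatorial data attached to the cones and not their supports.
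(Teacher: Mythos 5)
Your proposal is correct and follows essentially the same route as the paper: reduce to the decolorized toroidal variety $\tilde{X}$ via the valuative criterion for the proper morphism $f:\tilde{X}\to X$ (using that $|\tilde{\Sigma}|=|\Sigma|$), identify the relevant arcs inside the toric slice $Y_{\tilde{\Sigma}}$ by Ishii's toric result, and invoke the Luna--Vust/Gaitsgory--Nadler parametrization $G(\O)\backslash (G/H)(\K)\simeq N$. The only organizational difference is that you take that parametrization (with standard representatives $\lambda_n(t)\cdot e_0$) as the starting point and merely test these representatives against $X(\O)$, whereas the paper constructs $\mathcal{V}$ explicitly via the transitivity of $G(\O)$ on $(G/P)(\K)$ and cites \cite{GN10} only for the fact that the fibers are single $G(\O)$-orbits.
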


In the special case where $X$ is a toric $T$-embedding,
Theorem \ref{t:lattX} is due to Ishii, \cite[Theorem 4.1]{Ish04}.
In more detail, by \cite[Theorem 4.1]{Ish04} (and its proof),
we have:

\begin{lemma}   \label{l:lattX}

%$\clubsuit\clubsuit\clubsuit$
Let $Y:=Y_\Sigma$ be a toric $T$-embedding
defined by a fan $\Sigma$.
For any $\K$-rational point $\lambda \in T(\K)$, we denote by  $\lambda^*$
the corresponding ring homomorphism
 $\lambda^* : \C[M] \to \K$ and
define
 the element $n_\lambda$ of the dual lattice
$N = {\rm Hom}(M,\Z)$ as the composition of
$\lambda^*|_M : M \to \K^*$ and the standard valuation map
$\ord :  \K^* \to \Z$.
Then the map
$$\nu\; : T(\K)  \rightarrow N,
\, \lambda \mapsto n_\lambda
$$
induces a canonical isomorphism $T(\K)/T(\O) \cong N$
and one obtains a surjective map
$$\nu\; : \; Y(\O) \cap  T(\K)  \rightarrow  |\Sigma| \cap  N,
\, \lambda \mapsto n_\lambda
$$
whose fiber over any $n \in |\Sigma| \cap N$ is precisely one $T(\O)$-orbit.

\end{lemma}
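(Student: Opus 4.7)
The plan is to derive the lemma from the basic affine-toric picture together with the valuative criterion of separatedness. The map $\nu$ is really the restriction to $T(\K)$ of the group-scheme description of the torus $T=\Spec\C[M]$: a point $\lambda\in T(\K)$ is the same as a group homomorphism $\lambda^\ast|_M\colon M\to\K^\ast$, and composing with $\ord\colon\K^\ast\to\Z$ produces a homomorphism $M\to\Z$, i.e.\ an element $n_\lambda\in N$. The assignment $\lambda\mapsto n_\lambda$ is then evidently a group homomorphism $\nu\colon T(\K)\to N$.

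Next I would verify the isomorphism $T(\K)/T(\O)\cong N$. Surjectivity is immediate: given $n\in N$, define $\lambda^\ast\colon\C[M]\to\K$ by $m\mapsto t^{\langle n,m\rangle}$; this produces $\lambda\in T(\K)$ with $n_\lambda=n$. For the kernel, $\lambda\in T(\O)$ means $\lambda^\ast$ factors through $\O\subset\K$, i.e.\ $\lambda^\ast(m)\in\O$ for every $m\in M$; since $M$ is a group this forces $\ord\lambda^\ast(m)=0$ for all $m$, which is precisely $n_\lambda=0$. So $\ker\nu=T(\O)$.

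The heart of the argument is the second map. Cover $Y_\Sigma$ by the affine open toric charts $Y_\sigma=\Spec\C[\sigma^\vee\cap M]$ indexed by $\sigma\in\Sigma$. For $\lambda\in T(\K)$, the $\K$-morphism $\Spec\K\to Y_\Sigma$ extends to a $\O$-morphism $\Spec\O\to Y_\Sigma$ if and only if it extends to some $Y_\sigma(\O)$: this uses the valuative criterion of separatedness, together with the fact that $\Spec\O$ has a unique closed point, so the image of the closed point lies in one of the affine opens. Now $\lambda\in Y_\sigma(\O)$ exactly when $\lambda^\ast(m)\in\O$ for every $m\in\sigma^\vee\cap M$, i.e.\ $\langle n_\lambda,m\rangle\ge 0$ for every $m\in\sigma^\vee\cap M$, which by double-duality is equivalent to $n_\lambda\in\sigma$. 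Putting these together, $\lambda\in Y_\Sigma(\O)\cap T(\K)$ if and only if $n_\lambda\in|\Sigma|$, giving a well-defined surjection $\nu\colon Y(\O)\cap T(\K)\to|\Sigma|\cap N$.

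Finally, fibers are $T(\O)$-orbits: since $\nu\colon T(\K)\to N$ is a group homomorphism with kernel $T(\O)$, its fibers are the cosets of $T(\O)$, which coincide with $T(\O)$-orbits under the natural (translation) action. The main delicate point is the extension step in the third paragraph, where one must justify that an extension to $Y_\Sigma(\O)$, if it exists, already lives in a single $Y_\sigma(\O)$; I would cite separatedness of $Y_\Sigma$ and the uniqueness of the closed point of $\Spec\O$ as the standard reduction, everything else being a direct unwinding of the combinatorics of $\sigma^\vee\cap M$.
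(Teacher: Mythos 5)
Your proof is correct. Note that the paper itself does not prove this lemma: it attributes it to Ishii (\cite[Theorem 4.1]{Ish04}) and simply quotes the statement, so your write-up is a self-contained reconstruction of the cited argument, and it is indeed the standard one. All the key points are in place: $\nu$ is a group homomorphism because $\ord$ is; surjectivity comes from the one-parameter subgroups $m\mapsto t^{\langle n,m\rangle}$; the kernel is $T(\O)$ since for $\lambda\in T(\K)$ one has $\lambda^*(m)\in\K^*$ for every $m\in M$, so ``$\lambda^*(M)\subseteq\O$'' is equivalent to ``$\lambda^*(M)\subseteq\O^*$'', i.e.\ to $n_\lambda=0$ (your phrasing covers the inclusion $T(\O)\subseteq\ker\nu$; the reverse inclusion is the equally immediate observation that $\ord\lambda^*(m)=0$ forces $\lambda^*(m)\in\O$, hence $\lambda^*(\C[M])\subseteq\O$); and the chart computation $\lambda\in Y_\sigma(\O)\iff\langle n_\lambda,m\rangle\ge 0$ for all $m\in\sigma^\vee\cap M\iff n_\lambda\in\sigma$ is exactly right, using that $\sigma^\vee$ is generated by its lattice points and $(\sigma^\vee)^\vee=\sigma$. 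One small correction of emphasis: the reduction of ``$\lambda$ extends to $Y_\Sigma(\O)$'' to ``$\lambda$ extends to some $Y_\sigma(\O)$'' does not really use the valuative criterion of separatedness; separatedness only gives uniqueness of the extension, which you do not need. What you do need is that $\Spec\O$ is a local scheme: the preimage under the extension of the chart $Y_\sigma$ containing the image of the closed point is an open subset of $\Spec\O$ containing the closed point, hence is all of $\Spec\O$, so the extension factors through $Y_\sigma$. With that said, the fibers over $|\Sigma|\cap N$ are full cosets of $T(\O)$ precisely because, by your chart criterion, membership of $\lambda\in T(\K)$ in $Y(\O)$ depends only on $n_\lambda$; this is implicit in your last paragraph and worth one explicit sentence, but the argument is complete.
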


The above lemma will be used in the proof of Theorem \ref{t:lattX}:

\begin{proof}[Proof of Theorem \ref{t:lattX}]
%
%$\clubsuit\clubsuit\clubsuit$
Consider the canonial surjective morphism $\phi \, : \, G/H \to G/P$ whose
fibers are isomorphic to the algebraic torus $T:= P/H$. 
We consider $p_0:=[P]$
as a distinguished $\C$-point of $G/P$ such that  the fiber $\phi^{-1}(p_0) =T$
is the closed subvariety $P/H \subseteq G/H$.

Since $G/P$ is a projective variety, the valuative criterion of properness
implies that the natural map $(G/P)(\O) \to (G/P)(\K)$ from
$\O$-points of $G/P$
to $\K$-points of $G/P$ is bijective. 
It follows from  the local triviality of the map $G \to G/P$ 
that $(G/P)(\O) = G(\O)/P(\O)$. 
Thus, the group $G(\O)$ 
transitively acts on $G(\O)/P(\O) = (G/P)(\O) = (G/P)(\K)$.

Let $\lambda \in
(G/H)(\K)$ be a $\K$-point of $G/H$. Then 
$\phi(\lambda) \in (G/P)(\K) = G(\O)/P(\O)$. 
So there exists an element $\gamma \in
G(\O)$ such that $\gamma(\phi(\lambda)) = p_0 \in (G/P)(\C) \subset
(G/P)(\K)$. 
Since the morphism $\phi\, : \, G/H \to G/P$ commutes with the left $G$-action, 
the equality $\gamma(\phi(\lambda)) = p_0 = [P]$ 
implies that $\gamma(\lambda) \in T(\K) = (P/H)(\K) \subset (G/H)(\K)$. 

Now we   set $n_\lambda := \nu({\gamma(\lambda)})$
where $\nu$ is
the map $T(\K) \to N = {\rm Hom}(M,\Z) \cong T(\K)/T(\O)$
defined by Lemma \ref{l:lattX}.
 It is easy to see that  the lattice point $n_\lambda$ does no depend
on the choice of the element $\gamma \in G(\O)$.
Indeed,
if $\gamma' \in G(\O)$ is another element such that
$\gamma'(\phi(\lambda)) = p_0$ then the equality 
$\gamma'(\phi(\lambda)) = \gamma(\phi(\lambda)) =p_0$ implies 
that the element $\delta:= \gamma' \gamma^{-1}$ 
belongs to  $P(\O)$ and its 
image under the homomorphism  $P \to T = P/H$
is contained in $T(\O)$. So, we obtain that  the $\K$-points  
$\gamma'(\lambda),
 \gamma(\lambda) \in T(\K)$ define the same element  $n_\lambda \in
N=T(K)/T(\O)$.
Finally,
we get a map $\mathcal{V} : (G/H)(\K) \to N, \,
\lambda \mapsto n_\lambda$
which is constant on $G(\O)$-orbits.

Denote by $\widetilde{X}$ the toroidal embedding
of $G/H$ corresponding the decolorization $\widetilde{\Sigma}$
of $\Sigma$.
Let $f  :  \widetilde{X}  \to X $
be the proper birational $G$-equivariant morphism
as in Proposition \ref{p:dom}.
The valuative criterion of properness for $f$ implies
the equality
\[ \widetilde{X}(\O) \cap (G/H)(\K) = {X}(\O) \cap (G/H)(\K). \]
Since $|\Sigma| = | \widetilde{\Sigma}|$, it remains to prove the statement
only for the toroidal horospherical variety  $\widetilde{X}$.

Let $Y_{\widetilde{\Sigma}}$ be  the closure
of the torus $T = P/H \subset G/H$ in
$\widetilde{X}$. Recall that the toroidal  horospherical variety
$\widetilde{X}$ is a
homogeneous fiber bundle $G\times_P Y_{\widetilde{\Sigma}}$ over $G/P$
with fiber isomorphic to the toric variety $Y_{\widetilde{\Sigma}}$
(see the discussion after the Theorem \ref{t:bij} for that point).
This allows to consider  the set
 $Y_{\widetilde{\Sigma}}(\O) \cap T(\K)$ as a subset
of $\widetilde{X}(\O) \cap (G/H)(\K)$.
The restriction of  $\mathcal{V}$ to  $Y_{\widetilde{\Sigma}}(\O) \cap T(\K)$
is exactly the map $\nu\, : \,Y_{\widetilde{\Sigma}}(\O) \cap T(\K) \to
|\widetilde{\Sigma}| \cap N$ from Lemma \ref{l:lattX}.
So the image of $\mathcal{V}$ contains $|\widetilde{\Sigma}|$.

In the toric fibration  
$\phi\, : \, \widetilde{X} \to G/P$  the fiber 
$\phi^{-1}(p_0) \subset  \widetilde{X}$ is exactly the toric variety 
$Y_{\widetilde{\Sigma}}$ and the intersection $Y_{\widetilde{\Sigma}} \cap G/H$ 
is exactly  the torus $T = P/H$. 
Since the group $G(\O)$ acts transitively on
$(G/P)(\O) = (G/P)(\K)$, 
 for any $\lambda \in  \widetilde{X}(\O) \cap (G/H)(\K)$
there exists an element $\gamma \in G(\O)$ such that
$\gamma(\phi(\lambda))  = p_0$. This implies that 
$\gamma(\lambda) \in Y_{\widetilde{\Sigma}}(\O) \cap T(\K)$ and
$\mathcal{V}(\lambda) = \mathcal{V}(\gamma(\lambda))$. Therefore the images
of $\nu$ and  $\mathcal{V}$ are the same.

It  remains only to show that the fibers of $\mathcal{V}$
are precisely the $G(\O)$-orbits.
The latter follows from the $G(\O)$-action on $X(\O)$ and from
the canonical isomorphism
$G(\O) \setminus (G/H)(\K) \simeq N$
induced by $\mathcal{V}$,
see e.g.\,\cite[$\negmedspace$\S8.2]{GN10} (or \cite{LV83}),
because the subset $X(\O) \cap (G/H)(\K) \subset
(G/H)(\K)$ is $G(\O)$-invariant.
\end{proof}

We assume until the end of the section that $X$ is a smooth toroidal $G/H$-embedding such that
every closed orbit in $X$ is projective. This means that
$X$ corresponds to an uncolored fan $\Sigma$ such that every maximal cone of $\Sigma$ is generated
by a $\Z$-basis of $N$.
Then  $X$ is a fibration over $G/P$ with fiber
isomorphic to the smooth toric $T$-embedding $Y:=Y_\Sigma$
and the surjective map $\phi : X \to G/P$ induces,
for $m \in \N$,
surjective morphisms $\phi_m : \mathcal{J}_m(X) \to \mathcal{J}_m(G/P)$.
For any $m \in \N$, denote by
$\pi_m  :  \mathcal{J}_\infty(X) \to \mathcal{J}_m(X)$ and
$\pi'_m :  \mathcal{J}_\infty(Y) \to \mathcal{J}_m(Y)$
the canonical projection maps.
For any $n \in |\Sigma| \cap N$,
denote by $\mathcal{C}_{X,n}$ (resp.~$\mathcal{C}_{Y,n}$) the
$G(\O)$-orbit (resp.~$T(\O)$-orbit) of $X(\O) \cap (G/H)(\K)$ (resp.~$Y(\O) \cap T(\K)$)
corresponding to $n$ (see Theorem \ref{t:lattX} and Lemma \ref{l:lattX}).
As a consequence of the above proof of Theorem \ref{t:lattX}, we get:

\begin{cor}   \label{c:lattX}

Let $n \in |\Sigma| \cap N$ and $m \in \N$.
Then the restriction to $\pi_m(\mathcal{C}_{X,n})$
of $\phi_m$ is surjective onto $\mathcal{J}_m(G/P)$
and its fiber is isomorphic to $\pi'_m(\mathcal{C}_{Y,n})$.

\end{cor}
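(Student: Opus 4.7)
The plan is to exploit the fact that, since $X$ is smooth toroidal of the form $G\times_P Y_\Sigma$, the morphism $\phi \colon X \to G/P$ is a Zariski locally trivial fibration with fibre $Y=Y_\Sigma$; applying the $m$-th jet functor then gives that $\phi_m \colon \mathcal{J}_m(X)\to \mathcal{J}_m(G/P)$ is itself a Zariski locally trivial fibration with fibre $\mathcal{J}_m(Y)$. The idea is to analyse $\pi_m(\mathcal{C}_{X,n})$ by pulling everything back to the distinguished point $p_0 = [P]\in G/P$ and then spreading it out by the $G(\O)$-action.

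For the surjectivity of $\phi_m$ restricted to $\pi_m(\mathcal{C}_{X,n})$, I would repeat verbatim the transitivity argument from the proof of Theorem \ref{t:lattX}. Since $(G/P)(\O) = (G/P)(\K) = G(\O)/P(\O)$, the group $G(\O)$ acts transitively on $\mathcal{J}_\infty(G/P)(\C)$ and thus, via truncation, on $\mathcal{J}_m(G/P)(\C)$. Given $\nu_m \in \mathcal{J}_m(G/P)$, lift it to some $\nu \in (G/P)(\O)$, pick any $\lambda_0 \in \mathcal{C}_{X,n}$, and find $\gamma \in G(\O)$ with $\gamma\cdot\phi(\lambda_0)=\nu$. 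Then $\gamma \lambda_0 \in \mathcal{C}_{X,n}$ by $G(\O)$-invariance of the orbit, and $\phi_m(\pi_m(\gamma\lambda_0)) = \nu_m$.

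The same transitivity reduces the identification of the fibre of $\phi_m\vert_{\pi_m(\mathcal{C}_{X,n})}$ over an arbitrary point of $\mathcal{J}_m(G/P)$ to that of the fibre over the image $p_{0,m}$ of the constant arc at $p_0$. To compute the latter, I would choose a Zariski local section $s\colon U\to G$ of the canonical smooth morphism $G\to G/P$ with $s(p_0)=e$. If $\lambda\in\mathcal{C}_{X,n}$ satisfies $\phi(\lambda)\equiv p_0\pmod{t^{m+1}}$, then $\gamma := s(\phi(\lambda))^{-1} \in G(\O)$ satisfies $\gamma\equiv e\pmod{t^{m+1}}$ and $\gamma\cdot\phi(\lambda)=p_0$. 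Hence $\pi_m(\gamma\lambda)=\pi_m(\lambda)$, while $\gamma\lambda \in \phi^{-1}(p_0)(\O)\cap (G/H)(\K) = Y(\O)\cap T(\K)$. Since the construction of $\mathcal{V}$ in Theorem \ref{t:lattX} uses the trivial choice $\gamma=e$ whenever $\phi(\lambda)=p_0$, we have $\mathcal{C}_{X,n}\cap T(\K)=\mathcal{C}_{Y,n}$, so $\gamma\lambda\in\mathcal{C}_{Y,n}$ and $\pi_m(\lambda)\in\pi'_m(\mathcal{C}_{Y,n})$. The reverse inclusion $\pi'_m(\mathcal{C}_{Y,n})\subseteq \pi_m(\mathcal{C}_{X,n})\cap \phi_m^{-1}(p_{0,m})$ is immediate.

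The chief technical step, and in my view the only nontrivial one, is producing the element $\gamma\in G(\O)$ that moves $\phi(\lambda)$ to the constant arc at $p_0$ while remaining congruent to the identity modulo $t^{m+1}$; this is where the existence of Zariski local sections of the smooth morphism $G \to G/P$ is essential. Once this is secured, the rest of the argument is a direct transcription of the $G(\O)$-equivariant reasoning already present in Theorem \ref{t:lattX}.
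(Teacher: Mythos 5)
Your argument is correct and is essentially the elaboration the paper intends: the corollary is stated there as a direct consequence of the proof of Theorem \ref{t:lattX}, and you reuse exactly its ingredients (transitivity of $G(\O)$ on $(G/P)(\O)=(G/P)(\K)$, the identification of the fiber $\phi^{-1}(p_0)$ with $Y$, and the fact that $\mathcal{V}$ restricts to $\nu$ on $Y(\O)\cap T(\K)$, giving $\mathcal{C}_{X,n}\cap T(\K)=\mathcal{C}_{Y,n}$). The one detail you add --- producing $\gamma\in G(\O)$ congruent to $e$ modulo $t^{m+1}$ via a section of $G\to G/P$ defined near $p_0$ (which exists Zariski-locally because of the big cell, not by smoothness alone) so that $\pi_m(\gamma\lambda)=\pi_m(\lambda)$ --- is precisely the jet-level refinement the paper leaves implicit, and it is sound.
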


We aim to calculate the motivic measure (with respect to
$\mu_X$; cf. Definition \ref{d:cyl}) of the $G(\O)$-orbits in
$X(\O) \cap (G/H)(\K)$,
the other orbits having zero measure.

Let $n \in |\Sigma| \cap N$
and let $\sigma$ be a $r$-dimensional cone of $\Sigma$
such that $n \in \sigma$.
Fix a basis  $\{ u_1 ,\ldots, u_r \}$ of the semi-group $\sigma^\vee \cap M$.

\begin{lemma}   \label{l:meas}

Let $q \ge \max (\{  \langle n , u_j \rangle \ | \ j=1 ,\ldots, r \} )$.
In the notations of Corollary \ref{c:lattX},
the set $\mathcal{C}_{Y,n}$ is a cylinder with $q$-basis
$\pi_q' (\mathcal{C}_{Y,n}) \simeq (\A \smallsetminus 0 )^{r} \times \A^{q r - \sum\limits_{j=1}^r \langle n , u_j \rangle }.$

\end{lemma}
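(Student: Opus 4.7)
\medskip

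\noindent\textbf{Proof plan for Lemma \ref{l:meas}.}
My plan is to reduce to the affine chart of $Y_\Sigma$ corresponding to the smooth cone $\sigma$, then read off the truncated jet scheme by elementary coordinate bookkeeping. Since $X$ is smooth toroidal and every maximal cone of $\Sigma$ is generated by a $\Z$-basis of $N$, the cone $\sigma$ is smooth; consequently the semigroup $\sigma^\vee \cap M$ is freely generated by the $\Z$-basis $\{u_1,\ldots,u_r\}$ of $M$ dual to the primitive generators of $\sigma$. This identifies the affine toric open subset $U_\sigma = \Spec \C[\sigma^\vee \cap M] \subseteq Y_\Sigma$ with $\A^r$ via the coordinates $u_1,\ldots,u_r$, so that $U_\sigma(\O) \cap T(\K)$ is identified with the set of tuples $(f_1,\ldots,f_r) \in \O^r$ with each $f_j \neq 0$ in $\K$.

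Next I will identify $\mathcal{C}_{Y,n}$ explicitly inside this chart. Under the isomorphism $T(\K)/T(\O) \simeq N$ of Lemma \ref{l:lattX}, an arc $\lambda = (f_1,\ldots,f_r)$ satisfies $\mathcal{V}(\lambda) = n$ if and only if $\ord(f_j) = \langle n, u_j\rangle =: c_j$ for every $j$ (this uses that $u_1,\ldots,u_r$ form a $\Z$-basis of $M$, so $n_\lambda$ is determined by its pairings with the $u_j$'s). Since $n \in \sigma$ implies $c_j \ge 0$ for all $j$, the coordinates $f_j$ are automatically in $\O$, so $\mathcal{C}_{Y,n} \subseteq U_\sigma(\O)$. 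Hence $\mathcal{C}_{Y,n}$ is exactly
$$
\{(f_1,\ldots,f_r) \in \O^r \ | \  \ord(f_j) = c_j,\ j = 1,\ldots,r\}.
$$

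Now I would compute the $q$-truncation. Under the hypothesis $q \ge \max\{c_j\}$, a $q$-jet $(\bar f_1,\ldots,\bar f_r) \in (\C[t]/(t^{q+1}))^r$ belongs to $\pi'_q(\mathcal{C}_{Y,n})$ iff each $\bar f_j$ has the form $\sum_{i=c_j}^{q} a_{j,i} t^i$ with $a_{j,c_j} \neq 0$. This gives exactly one $(\A\setminus 0)$-factor and $q-c_j$ unconstrained affine factors per coordinate, so
$$
\pi'_q(\mathcal{C}_{Y,n}) \simeq \prod_{j=1}^r \bigl( (\A\setminus 0) \times \A^{q-c_j} \bigr) \simeq (\A\setminus 0)^r \times \A^{qr - \sum_{j=1}^r \langle n,u_j\rangle},
$$
which is the asserted description of the $q$-base.

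Finally I will verify the cylinder condition $\mathcal{C}_{Y,n} = (\pi'_q)^{-1}(\pi'_q(\mathcal{C}_{Y,n}))$. Because $q \ge c_j$, the leading term of any lift $f_j \in \O$ of a truncation $\bar f_j$ with $\ord(\bar f_j) = c_j$ coincides with that of $\bar f_j$, hence $\ord(f_j) = c_j$ as well; thus any arc lifting a jet of $\pi'_q(\mathcal{C}_{Y,n})$ lies in $\mathcal{C}_{Y,n}$. No step should present a genuine obstacle; the only point requiring care is to exploit the smoothness of $\sigma$ so that the $u_j$'s form a $\Z$-basis of $M$, which is what allows one both to recognize $U_\sigma$ as an affine space and to recover $n_\lambda$ from the orders of the coordinate functions.
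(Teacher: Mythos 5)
Your proposal is correct and follows essentially the same route as the paper: both identify $\mathcal{C}_{Y,n}$ inside the smooth affine chart $Y_\sigma \simeq \A^r$ as the arcs whose coordinate functions $\lambda^*(u_j)$ have order exactly $\langle n,u_j\rangle$, read off the $q$-truncation as $(\A\smallsetminus 0)^r\times\A^{qr-\sum_j\langle n,u_j\rangle}$, and check that $\mathcal{C}_{Y,n}=\pi_q'^{-1}(\pi_q'(\mathcal{C}_{Y,n}))$ using $q\ge\max_j\langle n,u_j\rangle$. No gaps; your explicit remark that smoothness of $\sigma$ makes $u_1,\ldots,u_r$ a $\Z$-basis of $M$ is exactly the point the paper uses implicitly.
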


\begin{proof}

By our choice of $q$, for any $\nu \in \pi'_q(\mathcal{C}_{Y,n})$,
the truncated arc $\pi'_q(\nu)$
can be viewed as a $r$-tuple
$(\nu^{(1)},\ldots,\nu^{(r)})$
where
$$
	\nu^{(j)} = \nu^{(j)}_{\langle n, u_j  \rangle} t^{\langle n, u_j \rangle}
	+ \nu^{ (j) }_{\langle n, u_j  \rangle + 1} t^{\langle n, u_j  \rangle +1}
	+ \cdots + \nu^{(j)}_{q} t^{q} \,  ;  \quad j \in 1,\ldots,r  \, ,
$$
for $ \nu_{\langle n, u_j  \rangle}^{(j)} \in \C^*$ and
$(\nu^{(j)}_{\langle n, u_j  \rangle + 1}, \ldots, \nu^{(j)}_{q}) \in \C^{q- \langle n , u_j  \rangle}$.
Indeed, the orbit $\mathcal{C}_{Y,n}$ is the set of all arcs $\nu \in Y_\sigma(\O) \cap T(\K)$ such that $n_\nu = n$ (see Lemma \ref{l:lattX}).
So, the space of the truncated arcs $\pi'_q(\nu)$ is isomorphic to
$$
	(\A \smallsetminus 0 )^{r} \times \A^{\sum\limits_{j=1}^r (q- \langle n , u_j  \rangle) }
	=  (\A \smallsetminus 0 )^{r} \times \A^{qr - \sum\limits_{j=1}^r \langle n , u_j  \rangle } \, .
$$
Moreover, if $\nu \in Y(\O)$ lies in $\pi'^{-1}_{q}(\pi'_q (\mathcal{C}_{Y,n}))$
then $\nu \in \mathcal{C}_{Y,n}$.
Hence $\mathcal{C}_{Y,n}=\pi'^{-1}_{q}(\pi'_q (\mathcal{C}_{Y,n}))$
and $\mathcal{C}_{Y,n}$ is a cylinder whose $q$-basis
is the constructible set $\pi'_q(\mathcal{C}_{Y,n})$.

\end{proof}

\begin{thm}     \label{t:meas}

We have $\mu_{X} (\mathcal{C}_{X,n}) =  [ G/H ] \, \L^{ - \sum\limits_{j=1}^r \langle n , u_j \rangle }.$

\end{thm}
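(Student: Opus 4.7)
The plan is to exploit the fact that the smooth toroidal variety $X$ is a Zariski locally trivial fibration $\phi : X \to G/P$ with fiber the smooth toric variety $Y = Y_\Sigma$ of dimension $r$. Since $P$ is parabolic in the reductive group $G$, the principal $P$-bundle $G \to G/P$ admits Zariski local sections (as one sees via the Bruhat big cell), so the associated bundles $X = G\times_P Y \to G/P$ and $G/H \to G/P$ are Zariski locally trivial. In particular, $[G/H] = [G/P]\,(\L-1)^r$ in $K_0(\V)$, and for every $m$ the morphism $\phi_m : \mathcal{J}_m(X) \to \mathcal{J}_m(G/P)$ is a Zariski locally trivial $\mathcal{J}_m(Y)$-bundle. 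We also have $\dim X = \dim G/P + r$.

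Pick $q \geq \max_j \langle n, u_j\rangle$ as in Lemma \ref{l:meas}. The first step is to check that $\mathcal{C}_{X,n}$ is a cylinder with $q$-basis $\pi_q(\mathcal{C}_{X,n})$. By Corollary \ref{c:lattX}, $\phi_q$ maps $\pi_q(\mathcal{C}_{X,n})$ surjectively onto $\mathcal{J}_q(G/P)$ with fibers isomorphic to $\pi'_q(\mathcal{C}_{Y,n})$, and this fiber structure is Zariski locally trivial because the ambient fibration $\mathcal{J}_q(X)\to \mathcal{J}_q(G/P)$ is. Since $\mathcal{C}_{Y,n} = (\pi'_q)^{-1}\pi'_q(\mathcal{C}_{Y,n})$ by Lemma \ref{l:meas}, fiberwise we see that $\pi_q^{-1}\pi_q(\mathcal{C}_{X,n})$ equals the union of $G(\O)$-orbits mapping to the points of $\pi_q(\mathcal{C}_{X,n})$; invoking Theorem \ref{t:lattX} together with the fact that the lattice point $n$ can be read off from any $q$-jet in $\pi'_q(\mathcal{C}_{Y,n})$, this union is precisely $\mathcal{C}_{X,n}$. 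Hence $\mathcal{C}_{X,n} = \pi_q^{-1}\pi_q(\mathcal{C}_{X,n})$ is a cylinder.

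The second step is to compute the class of the $m$-basis for $m \geq q$. Zariski local triviality and Corollary \ref{c:lattX} give
\[
  [\pi_m(\mathcal{C}_{X,n})] = [\mathcal{J}_m(G/P)] \cdot [\pi'_m(\mathcal{C}_{Y,n})]
\]
in $K_0(\V)$. Since $G/P$ is smooth, $\mathcal{J}_m(G/P) \to G/P$ is a Zariski locally trivial $\A^{m\dim G/P}$-bundle, giving $[\mathcal{J}_m(G/P)] = [G/P]\,\L^{m\dim G/P}$. Since $Y$ is smooth of dimension $r$, the truncation $\pi'_{m,q} : \pi'_m(\mathcal{C}_{Y,n}) \to \pi'_q(\mathcal{C}_{Y,n})$ is an $\A^{(m-q)r}$-bundle, so by Lemma \ref{l:meas}
\[
  [\pi'_m(\mathcal{C}_{Y,n})] = (\L-1)^r \, \L^{\,mr - \sum_{j=1}^r \langle n, u_j\rangle}.
\]

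Combining everything and applying Definition \ref{d:cyl},
\[
  \mu_X(\mathcal{C}_{X,n}) = [\pi_m(\mathcal{C}_{X,n})]\,\L^{-m\dim X} = [G/P](\L-1)^r\,\L^{-\sum_j \langle n,u_j\rangle} = [G/H]\,\L^{-\sum_j \langle n,u_j\rangle},
\]
using $[G/H] = [G/P](\L-1)^r$ in the last equality. The main technical obstacle is the first step, namely verifying that $\mathcal{C}_{X,n}$ really is a cylinder and identifying its $q$-base; this rests on the fibration structure plus the fact (Theorem \ref{t:lattX}) that the map $\mathcal{V}$ is constant on $G(\O)$-orbits and that its value is determined by the $q$-jet of the projection to the toric fiber, together with the Zariski local triviality of $G \to G/P$ which is what allows classes of fibrations to multiply in $K_0(\V)$.
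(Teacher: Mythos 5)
Your proof is correct and follows essentially the same route as the paper: both rest on Corollary \ref{c:lattX}, Lemma \ref{l:meas}, and the multiplicativity of classes over the fibration $\phi_q$, giving $[\pi_q(\mathcal{C}_{X,n})] = [\mathcal{J}_q(G/P)]\,(\L-1)^r\,\L^{qr - \sum_j \langle n,u_j\rangle}$ and then $[\mathcal{J}_q(G/P)] = [G/P]\,\L^{q(d-r)}$ with $[G/P](\L-1)^r = [G/H]$. The extra details you supply (Zariski local triviality via the big cell, and the verification that $\mathcal{C}_{X,n} = \pi_q^{-1}\pi_q(\mathcal{C}_{X,n})$) are points the paper leaves implicit, and they are consistent with its argument.
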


\begin{proof}

By Corollary \ref{c:lattX} and Definition \ref{d:cyl},
the motivic measure of the cylinder
$\mathcal{C}_{X,n}  = \pi_q^{-1}(\pi_q (\mathcal{C}_{X,n} ))$
of  $X(\O)$, for $q \gg 0$, is expressed by the formula:
$$\mu_{X}( \mathcal{C}_{X,n} ) = [ \pi_q( \mathcal{C}_{X,n} )] \L^{-q d}
 = [ \mathcal{J}_q(G/P)  ]  \, (\L-1)^r \, \L^{ q r - \sum\limits_{j=1}^r \langle n , u_j \rangle} \L^{-q d}  \, .$$
Since $\mathcal{J}_q(G/P)$ is a locally trivial
$\A^{q (d-r)}$-bundle over $G/P$
and $[ G/P ] (\L-1)^r  = [G/H ]$, we get
$\mu_{X} (\mathcal{C}_{X,n}) =  [ G/H ] \, \L^{ - \sum\limits_{j=1}^r \langle n , u_j \rangle }$.

\end{proof}

%%%%%%%%%%%%%%%%%%%%%%%%%%%%%%%
%%%%%%%%%%%%%%%%%%%%%%%%%%%%%%%
%%%%
%%%%
 \section{The stringy motivic volume of horospherical varieties}     \label{S:Est}
%%%%
%%%%
%%%%%%%%%%%%%%%%%%%%%%%%%%%%%%%
%%%%%%%%%%%%%%%%%%%%%%%%%%%%%%%

The aim of this section is to prove a formula for $\E_{\rm st}(X)$
for any $\Q$-Gorenstein
horospherical embedding $G/H \hookrightarrow X$,
see Theorem \ref{t:main}.

\smallskip

For our purpose, we need to explain the canonical class of a horospherical variety.
Let $G/H \hookrightarrow X$ be a $\Q$-Gorenstein $d$-dimensional
horospherical embedding.
For $\alpha \in S$, denote by $\varpi_\alpha$ the corresponding fundamental weight of $S$.
Let $\rho_S$ (resp.~$\rho_I$) be the half sum of positive roots of $S$ (resp.~$I$).
Note that $\rho_S =\sum_{\alpha \in S} \varpi_\alpha$.
For any $\alpha \in S\smallsetminus I$, we define the integers $a_\alpha$ by the equality:
$$
	2(\rho_S - \rho_I ) =\sum\limits_{\alpha \in S \smallsetminus I} a_\alpha \varpi_\alpha .
$$
We refer to  \cite[$\negmedspace$\S4.1]{Br93} or \cite[Theorem 4.2]{Br97} for the following result:

\begin{prop}              \label{p:KX}

Let $X$ be a $G/H$-embedding.
Then
$$
	K_X =  \sum_{\alpha \in S\smallsetminus I}  - a_\alpha \overline{\varDelta_\alpha} +
	\sum_{ j = 1}^t -D_j \, ,
$$
where $D_1, \ldots, D_t$ are the irreducible divisors in the complement of $X$
to the dense open $G$-orbit, and $\overline{\varDelta_\alpha}$
$(\alpha \in S\smallsetminus I)$ is the closure of $\varDelta_\alpha$ in $X$.

\end{prop}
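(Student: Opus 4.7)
The strategy is to compute $K_X$ as the divisor of a well-chosen $B$-semi-invariant rational top differential form $\omega$. Since $\omega$ is $B$-semi-invariant, $\mathrm{div}(\omega)$ is a $B$-stable Weil divisor, hence a $\Z$-linear combination of $B$-stable prime divisors of $X$. On a horospherical $G/H$-embedding, the $B$-stable prime divisors are exactly the $G$-stable divisors $D_1, \ldots, D_t$ together with the closures $\overline{\varDelta_\alpha}$ ($\alpha \in S \smallsetminus I$) of the colors, so it suffices to determine the multiplicity of $\omega$ along each of these.

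The construction of $\omega$ exploits the principal $T$-bundle structure $\phi \colon G/H \to G/P$. Start from a $B$-semi-invariant rational top form $\omega_{G/P}$ on the flag variety $G/P$. The classical computation of the canonical class yields $\mathrm{div}(\omega_{G/P}) = -\sum_{\alpha \in S \smallsetminus I} a_\alpha \overline{\varGamma_\alpha}$, because $-K_{G/P}$ corresponds to the character $2(\rho_S - \rho_I) = \sum_{\alpha \in S \smallsetminus I} a_\alpha \varpi_\alpha$ under the identification $\Pic(G/P) \simeq \bigoplus_\alpha \Z\,[\overline{\varGamma_\alpha}]$ sending $\varpi_\alpha \mapsto [\overline{\varGamma_\alpha}]$. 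Using the trivialization $\phi^{-1}(U_0) \simeq U_0 \times T$ and $G$-equivariant extension, combine $\phi^*\omega_{G/P}$ with a $T$-invariant top form on the torus fibers to obtain a $B$-semi-invariant rational top form $\omega$ on $G/H$, which extends rationally to $X$.

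The coefficient of $\omega$ along each $B$-stable prime of $X$ is then computed separately. Along $\overline{\varDelta_\alpha}$: smoothness of $\phi$ and the equality $\varDelta_\alpha = \phi^{-1}(\varGamma_\alpha)$ give $\mathrm{ord}_{\overline{\varDelta_\alpha}}(\omega) = \mathrm{ord}_{\overline{\varGamma_\alpha}}(\omega_{G/P}) = -a_\alpha$. Along $D_j$: by Proposition \ref{p:dom}, a toroidal $G$-equivariant resolution $\tilde X = (G \times Y)/P$ (with $Y$ a toric $T$-embedding) is an isomorphism near the generic point of every $G$-stable prime divisor coming from $X$, so the calculation may be carried out on $\tilde X$. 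There, $\omega$ restricts on the toric factor $Y$ to a $T$-invariant top form, and the standard toric formula shows that such a form has a simple pole along each $T$-stable prime divisor of $Y$; hence $\mathrm{ord}_{D_j}(\omega) = -1$.

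The main technical obstacle is the transfer between $X$ and its toroidal resolution: decolorization introduces additional $G$-stable divisors on $\tilde X$ which are $f$-exceptional, and one must carefully check that $\omega$ extends coherently and that the local coefficient computation along a non-exceptional $G$-stable prime descends to give the same multiplicity on $X$ itself. Once this compatibility is established, summing the two types of contributions gives $\mathrm{div}(\omega) = -\sum_{\alpha \in S \smallsetminus I} a_\alpha \overline{\varDelta_\alpha} - \sum_{j=1}^{t} D_j$, which is the asserted formula for $K_X$.
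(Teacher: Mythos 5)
The paper gives no proof of Proposition \ref{p:KX} at all: it simply refers to \cite[\S4.1]{Br93} and \cite[Theorem 4.2]{Br97}, where the formula is proved for arbitrary spherical varieties. Your argument is therefore a genuinely different (and more self-contained) route, specialized to the horospherical situation: you compute the divisor of a $B$-semi-invariant rational top form built from the torus bundle $\phi \colon G/H \to G/P$, obtaining the coefficients $-a_\alpha$ along the colors from the classical anticanonical class of $G/P$, and the coefficients $-1$ along the $G$-stable divisors from the toric fibers of the toroidal model. This is sound, and it is in effect Brion's method made explicit by the fibration; what it buys is a direct proof avoiding the general spherical machinery, at the cost of the bookkeeping between $X$ and its decolorization. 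Two small points deserve to be said explicitly: the relative canonical bundle of the principal $T$-bundle $\phi$ is trivialized by a $G$-invariant relative top form $\theta$, which is why $\theta$ contributes no zero or pole along $\varDelta_\alpha = \phi^{-1}(\varGamma_\alpha)$; and since the classes $[\varGamma_\alpha]$ freely generate $\Pic(G/P)$ and $\mathrm{div}(\omega_{G/P})$ is a $B$-stable divisor supported on $\bigcup_\alpha \varGamma_\alpha$, the divisor itself (not merely its class) equals $-\sum_\alpha a_\alpha \varGamma_\alpha$, so the order computation along colors is legitimate.

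The step you flag as the main technical obstacle closes quickly and is not a real gap. The $G$-stable prime divisors of $X$ correspond to the uncolored rays of $\Sigma$; for such a ray, the divisor $D_j \subset X$ and the corresponding divisor $\tilde D_j \subset \tilde X$ in the decolorization of Proposition \ref{p:dom} induce the same $G$-invariant divisorial valuation of $\C(X)$ (the one given by the primitive generator of the ray). Since $f$ is proper and birational, $\mathcal{O}_{\tilde X, \tilde D_j}$ dominates $\mathcal{O}_{X, D_j}$, and two DVRs of $\C(X)$ with the same valuation coincide; hence the order of the rational form $\omega$ along $D_j$ equals its order along $\tilde D_j$, and no separate statement about a coherent extension of $\omega$ is needed, because all multiplicities are read off at codimension-one points. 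The extra $G$-stable divisors created by removing colors are $f$-exceptional and simply do not occur among the $D_j$, so they do not disturb the formula on $X$.
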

Let $\Sigma \subset N_\R$ be the colored fan corresponding to $X$.
The $\Q$-Gorenstein property  is equivalent to  the existence of a continuous
function
$$Ê\omega_X \, : \, | \Sigma | \to \R $$
satisfying the following conditions (cf.~\cite[Proposition 4.1]{Br93}):

(P1) \; $\omega_X(e_\tau) = -1$ for a primitive integral generator $e_\tau$
of an uncolored ray $\tau$ of $\Sigma$;

(P2) \; $\omega_X(\varrho_\alpha) = - a_\alpha$ for a colored cone $(\sigma,\F)$ of $\Sigma$
and $\alpha \in \F$;

(P3) \; $\omega_X$ is linear on each cone $\sigma \in \Sigma$.

\medskip

%Consider a resolution of singularities of $X$.
Let $f' \, :  \, X'  \to X$ be a proper birational $G$-equivariant morphism
where $X'$ is  a smooth toroidal $G$-equivariant embedding
with (uncolored) fan $\Sigma'$
obtained from $\Sigma$ by removing colors and subdividing
(see the discussion after the proposition \ref{p:dom}).
%Note that the fans $\Sigma'$ and $\Sigma$ share the same support $|\Sigma|$.
Denote by
$$
	K_{X'/X} : = K_{X'}  - {f'}^* K_X
$$
the discrepancy divisor of $f'$.

\medskip

Let $\tau_1 ',\ldots,\tau_q '$ be the rays
of $\Sigma'$ which are not rays of $\Sigma$ (this set may be empty),
$e_{\tau_1'},\ldots, e_{\tau_q '}$ the respective primitive integral generators,
and $D'_1,\ldots, D'_q$ the respective irreducible $G$-stable
divisors of $X'$.
Let also $\tau_{1},\ldots,\tau_{t}$
be the uncolored rays of $\Sigma$
and $(\tau_{t+1}, \F_{t+1}),\ldots, (\tau_s ,\F_s)$ the colored ones.
Denote by $D_1,\ldots ,D_s$
the irreducible $G$-stable divisors of $X'$
corresponding to the rays $\tau_1,\ldots ,\tau_s$ of $\Sigma'$.
Thus,
$$
	\{ D'_1,\ldots, D'_m \} \cup \{ D_1, \ldots, D_s \}
$$
is the set of irreducible $G$-stable divisors of $X'$.
Let $e_{\tau_1}, \ldots, e_{\tau_s}$ be primitive integral generators of the rays
$\tau_1,\ldots,\tau_s$ of $\Sigma'$ respectively.

\begin{prop}  \label{p:div}

Assume that $X$ is $\Q$-Gorenstein.
Then
$$
	K_{X'/X} =   \sum\limits_{i=1}^{q} (- 1 - \omega_X(e_{\tau_i'}) ) D'_i
		+ \sum\limits_{j = t+1}^s (- 1- \omega_X( e_{\tau_j}) ) D_j  .
$$
Moreover, $K_{X'/X}$ is a smooth simple normal crossings Cartier divisor
and $X$ has at worst log-terminal singularities.

\end{prop}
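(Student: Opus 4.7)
The plan is to compute $K_{X'}$ and $f'^{*}K_X$ separately as $\Q$-Weil divisors on the smooth toroidal resolution $X'$, and then take their difference. First, I would apply Proposition~\ref{p:KX} directly to $X'$ --- whose colored fan $\Sigma'$ has only empty color sets and whose full list of $G$-stable prime divisors is $\{D_1,\ldots,D_s,D'_1,\ldots,D'_q\}$ --- obtaining
$$K_{X'}=-\sum_{\alpha\in S\smallsetminus I}a_\alpha\overline{\varDelta_\alpha'}-\sum_{j=1}^{s}D_j-\sum_{i=1}^{q}D'_i,$$
where $\overline{\varDelta_\alpha'}$ denotes the closure of $\varDelta_\alpha$ in $X'$.

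For $f'^{*}K_X$, since $mK_X$ is Cartier for some integer $m>0$, the pullback $f'^{*}(mK_X)$ is Cartier on $X'$. Using the standard correspondence between $\Q$-Cartier divisors on horospherical embeddings and piecewise-linear support-function data on the colored fan (cf.~\cite{Br93}), the coefficient of $f'^{*}K_X$ along the strict transform $\overline{\varDelta_\alpha'}$ is again $-a_\alpha$, while the coefficient along the $G$-stable divisor of $X'$ attached to a ray $\tau$ of $\Sigma'$ with primitive generator $e_\tau$ is $\omega_X(e_\tau)$, where $\omega_X$ is extended by linearity on each cone of the refinement $\Sigma'$. Subtracting, the $\overline{\varDelta_\alpha'}$-contributions cancel, and by property (P1) the coefficients along $D_1,\ldots,D_t$ (originally uncolored rays of $\Sigma$) vanish as well, leaving exactly the asserted formula.

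For the ``moreover'' clause, smoothness of $X'$ makes every Weil divisor Cartier, and the description $X'\cong(G\times Y_{\Sigma'})/P$ as a fiber bundle over $G/P$ with smooth toric fiber $Y_{\Sigma'}$ shows that the union of $G$-stable divisors of $X'$ is a simple normal crossings divisor, hence so is the support of $K_{X'/X}$. Log-terminality of $X$ amounts to $\omega_X(e_\tau)<0$ for every ray $\tau$ contributing to $K_{X'/X}$: on uncolored rays of $\Sigma$ this is (P1); on colored rays it follows from (P2) together with the standard positivity $a_\alpha>0$ (since $2(\rho_S-\rho_I)$ is the sum of positive roots in the nilradical of $P_I$ and pairs positively with each coroot $\check\alpha$ for $\alpha\notin I$); and on a subdivision ray $e_{\tau_i'}$ lying in a cone $\sigma$ of $\Sigma$, linearity of $\omega_X$ on $\sigma$ expresses $\omega_X(e_{\tau_i'})$ as a nonnegative combination of strictly negative values.

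The main technical obstacle is justifying the pullback formula used in the second paragraph. I would verify it by reducing to the simple case $X=X_{\sigma,\F}$ and checking locally that $\Q$-Cartier divisors on a simple horospherical embedding are encoded by a linear form on $\sigma$ together with the coefficients along the non-$G$-stable $B$-stable divisors $\overline{\varDelta_\alpha}$; restriction of this linear form to the refinement $\Sigma'$ then produces precisely the claimed pullback, and the $\Q$-Cartier case follows by passing to the Cartier multiple $mK_X$ and dividing by $m$.
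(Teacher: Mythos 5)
Your proposal is correct and follows essentially the same route as the paper: apply Proposition \ref{p:KX} to both $X$ and $X'$, use the piecewise-linear description of the $\Q$-Cartier divisor $K_X$ (properties (P1)--(P3)) to see that $f'^*K_X$ has coefficient $\omega_X(e_\tau)$ along each $G$-stable divisor of $X'$ and $-a_\alpha$ along each $\overline{\varDelta_\alpha}$, and subtract, with the SNC statement coming from smoothness and toroidality of $X'$. The only real divergence is in the last claim: the paper simply cites \cite[Theorem 4.1]{Br93} for log-terminality, whereas you deduce it directly from the strict negativity of $\omega_X$ on the cone generators (using $a_\alpha>0$ and linearity on each cone), which is a harmless and slightly more self-contained variant.
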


\begin{proof}

Since $X'$ is smooth,
there is a continuous function, $\omega_{X'} : |\Sigma'| \to \R$,
satisfying the following conditions:

(P1$^\prime$) \; $\omega_{X'}(e_{\tau_i'}) = \omega_{X'}(e_{\tau_j}) = -1$
for all $i=1,\ldots, q$ and $j=1,\ldots,s$;

(P2$^\prime$) \; $\omega_{X'}$ is linear on each cone of $\Sigma'$.

\smallskip

\noindent
Define a function $\psi :  |\Sigma'| \to \R$
by setting $\psi  (n) : = \omega_{X'} (n) - \omega_X (n)$ for any $n \in N_\R$.
Then $\psi$ is a continuous map
which  is linear on each cone of $\Sigma'$ (use properties (P3) and (P2$^\prime$)).
By Proposition~\ref{p:KX},
$$
	K_{X'} =  \sum_{\alpha \in S\smallsetminus I}  - a_\alpha \overline{\varDelta_\alpha} +
	\sum_{ i = 1}^q -D_i' + \sum_{ j = 1}^s - D_j  \quad \textrm{ and }   \quad
    	K_{X} =  \sum_{\alpha \in S\smallsetminus I}  - a_\alpha \overline{\varDelta_\alpha} +
	\sum_{ j = 1}^t - D_j \, . $$
So, by the conditions (P1), (P2) and (P1$^\prime$), we get
$$
	K_{X'/X} = \sum\limits_{i=1}^{q} (- 1 - \omega_X(e_{\tau_i'}) ) D'_i
	+ \sum\limits_{j = t+1}^s (- 1 - \omega_X(e_{\tau_j}) ) D_j  \, .
$$
Since $X$ is $\Q$-Gorenstein, $X$ has at worst log-terminal singularities,
see \cite[Theorem 4.1]{Br93}.
%This fact can be also traced back from the above formula.
At last, $X'$ being smooth and toroidal, $K_{X'/X}$ is a smooth
simple normal crossings divisor.

\end{proof}

We are now in the position to state the main result of this section:

\begin{thm}  \label{t:main}

Let $G/H \hookrightarrow X$ be a $\Q$-Gorenstein $d$-dimensional
horospherical embedding with colored fan $\Sigma \subset N_\R$,
and $\omega_X$ as above.
Then
$$
	\E_{\rm st}(X ) = [G/H ]
			\sum \limits_{ n \in | \Sigma| \cap N } \L^{\omega_X(n)} \, .
$$

\end{thm}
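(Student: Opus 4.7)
My strategy is to reduce the integral to the smooth toroidal resolution $f': X' \to X$ of Proposition~\ref{p:dom} and then partition its arc space into the $G(\O)$-orbits described in Theorem~\ref{t:lattX}. By Proposition~\ref{p:Est} applied to $X'$ (which is legitimate since Proposition~\ref{p:div} ensures $K_{X'/X}$ is a simple normal crossings divisor with discrepancies $>-1$, and $|\Sigma'|=|\Sigma|$), one has
$$
	\E_{\rm st}(X) = \int_{\mathcal{J}_\infty(X')} \L^{-\ord_{K_{X'/X}}}\, {\rm d}\mu_{X'}.
$$
Since $X'\smallsetminus (G/H)$ is a proper closed subvariety of $X'$, its arc space has motivic measure zero by Proposition~\ref{p:mot}(ii), so the integration can be restricted to $X'(\O)\cap (G/H)(\K)$.

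Next I would apply Theorem~\ref{t:lattX} to $X'$: since $\Sigma'$ has no colors and the same support as $\Sigma$, the intersection $X'(\O)\cap (G/H)(\K)$ decomposes as a countable disjoint union of the $G(\O)$-orbits $\mathcal{C}_{X',n}$ with $n$ running through $|\Sigma|\cap N$. By countable additivity of the motivic integral (Proposition~\ref{p:mot}(i)) the problem reduces to computing, for each $n$, the contribution
$$
	\int_{\mathcal{C}_{X',n}} \L^{-\ord_{K_{X'/X}}}\, {\rm d}\mu_{X'}.
$$
The measure $\mu_{X'}(\mathcal{C}_{X',n})$ is already given by Theorem~\ref{t:meas}: for the unique $r$-dimensional cone $\sigma'\in\Sigma'$ containing $n$ with primitive generators $e_{\tau_1},\ldots,e_{\tau_r}$ forming a $\Z$-basis of $N$ and dual basis $u_1,\ldots,u_r$ of $\sigma'^{\vee}\cap M$, one has $\mu_{X'}(\mathcal{C}_{X',n}) = [G/H]\, \L^{-\sum_j \langle n, u_j\rangle}$. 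Writing $n=\sum_j c_j e_{\tau_j}$ gives $\langle n,u_j\rangle = c_j$.

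The key remaining step (and the main technical point) is to show that $\ord_{K_{X'/X}}$ is constant on $\mathcal{C}_{X',n}$ and equals $-\sum_j c_j - \omega_X(n)$. For this I would use Proposition~\ref{p:div}, which expresses $K_{X'/X}=\sum_\tau (-1-\omega_X(e_\tau))D_\tau$ summed over all rays $\tau$ of $\Sigma'$ (the rays of $\Sigma'$ that remain uncolored rays of $\Sigma$ contribute trivially by property (P1)), together with the toric computation: for an arc $\nu\in\mathcal{C}_{X',n}$ one has $\ord_{D_{\tau_j}}(\nu)=\langle n, u_j\rangle = c_j$, which is seen from Lemma~\ref{l:lattX} after reducing via the fibration $\phi$ to the toric fiber, and vanishes for rays $\tau$ not in $\sigma'$. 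Using linearity of $\omega_X$ on $\sigma'$ then yields
$$
	\ord_{K_{X'/X}}(\nu) = \sum_{j=1}^r c_j\bigl(-1-\omega_X(e_{\tau_j})\bigr) = -\sum_{j=1}^r c_j - \omega_X(n).
$$
Combining these two computations gives
$$
	\int_{\mathcal{C}_{X',n}} \L^{-\ord_{K_{X'/X}}}\, {\rm d}\mu_{X'} = [G/H]\, \L^{\omega_X(n)},
$$
and summing over $n\in |\Sigma|\cap N$ produces the desired formula. The principal obstacle is the toric identification of $\ord_{D_\tau}$ on a $G(\O)$-orbit with the corresponding coordinate of $n$; one handles this by transporting the arc via Theorem~\ref{t:lattX} into the toric fiber $Y_{\Sigma'}$ over $p_0=[P]$, where it becomes the standard toric calculation underlying Lemma~\ref{l:meas}.
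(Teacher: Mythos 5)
Your overall strategy is exactly the paper's: pass to the toroidal resolution $X'$ of Proposition \ref{p:dom}/\ref{p:div}, discard the measure-zero complement of $(G/H)(\K)$, decompose $X'(\O)\cap (G/H)(\K)$ into the $G(\O)$-orbits $\mathcal{C}_{X',n}$ of Theorem \ref{t:lattX}, apply countable additivity, and for each $n$ combine the orbit measure of Theorem \ref{t:meas} with the constancy of $\ord_{K_{X'/X}}$ on the orbit; the final bookkeeping with (P1)--(P3) is the same as in Lemma \ref{l:main2} (your device of summing over \emph{all} rays, the uncolored original rays contributing the coefficient $-1-\omega_X(e_\tau)=0$, is just a harmless streamlining of the paper's explicit case split).

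There is, however, one step that does not work as you state it: you compute $\mu_{X'}(\mathcal{C}_{X',n})$ by invoking Theorem \ref{t:meas} for ``the unique $r$-dimensional cone $\sigma'\in\Sigma'$ containing $n$''. Theorem \ref{t:meas} and Lemma \ref{l:meas} are established under the standing hypothesis that every maximal cone of the (uncolored) fan is generated by a $\Z$-basis of $N$, i.e.\ that all closed orbits of $X'$ are projective. For a general $\Q$-Gorenstein horospherical embedding the maximal cones of $\Sigma'$ may have dimension strictly less than $r$ (already for $\Sigma$ consisting of a single ray with $r\ge 2$), and then no $r$-dimensional cone of $\Sigma'$ contains $n$: neither the dual basis $u_1,\ldots,u_r$ of $(\sigma')^\vee\cap M$ nor the measure formula is available as stated. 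The paper closes exactly this point by a local reduction inside Lemma \ref{l:main2}: one first reduces to the simple variety attached to a colored cone $(\sigma,\F)$ containing $n$, and if $\dim\sigma<r$ one embeds $\sigma$ as a face of an $r$-dimensional cone $\hat\sigma$ such that $\omega_{\hat X}$ restricts to $\omega_X$ on $\sigma$ and the smooth subdivision of $\sigma$ extends to one of $\hat\sigma$, so that the full-dimensional computation applies. Your argument needs this reduction (or, alternatively, a direct extension of Lemma \ref{l:meas} and Theorem \ref{t:meas} to arcs whose associated lattice point lies in a lower-dimensional cone) in order to cover the full generality claimed in Theorem \ref{t:main}; with that supplement the proof is complete and coincides with the paper's.
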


\medskip

The remaining of the section is devoted to the proof of Theorem \ref{t:main}:
Theorem \ref{t:main} will be a straightforward consequence of Lemma \ref{l:main1}
and Lemma \ref{l:main2}.
Keep the above notations and
denote by $\mathcal{C}_{X',n}$ the
$G(\O)$-orbit in $X'(\O) \cap (G/H)(\K)$ corresponding to $n \in |\Sigma| \cap N$
(cf.\,Theorem\,\ref{t:lattX}).

\begin{lemma}   \label{l:main1}

We have:
\begin{eqnarray*}
	\E_{\rm st}(X ) =
		\sum\limits_{n \in |\Sigma| \cap N}\  \int\limits_{ \ \mathcal{C}_{X',n}}
			\L^{- {\rm ord}_{K_{X'/X}}} \, {\rm d} \mu_{X'}.
\end{eqnarray*}

\end{lemma}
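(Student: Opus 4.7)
The plan is to begin from the resolution expression of $\E_{\rm st}(X)$ given by Proposition \ref{p:Est}, to discard the part of the arc space supported on the $G$-stable boundary, and then to decompose the remaining integral as a countable sum indexed by the lattice points of $|\Sigma| \cap N$ via Theorem \ref{t:lattX}.

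First, Proposition \ref{p:Est} gives
$$
	\E_{\rm st}(X) = \int_{\mathcal{J}_\infty(X')} \L^{-\ord_{K_{X'/X}}}\, {\rm d}\mu_{X'}.
$$
Since $G/H$ is open and dense in $X'$, its complement $X' \smallsetminus G/H$ is a closed subvariety of strictly smaller dimension, so Proposition \ref{p:mot}(ii) yields $\mu_{X'}(\mathcal{J}_\infty(X'\smallsetminus G/H)) = 0$. An $\O$-arc of $X'$ lies in $X'(\O) \cap (G/H)(\K)$ precisely when its generic point lies in $G/H$, so one has the disjoint decomposition
$$
	X'(\O) = \bigl(X'(\O) \cap (G/H)(\K)\bigr) \sqcup \mathcal{J}_\infty(X' \smallsetminus G/H),
$$
and the integral may be restricted to $X'(\O) \cap (G/H)(\K)$ without changing its value.

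Next, because $X'$ is toroidal with (uncolored) fan $\Sigma'$ satisfying $|\Sigma'| = |\Sigma|$, Theorem \ref{t:lattX} partitions $X'(\O) \cap (G/H)(\K)$ into the countably many $G(\O)$-orbits $\mathcal{C}_{X',n}$ indexed by $n \in |\Sigma|\cap N$. I would then observe that each orbit is a cylinder in $\mathcal{J}_\infty(X')$: combining Lemma \ref{l:meas} on the toric fibre with the fibre-bundle description from Corollary \ref{c:lattX}, for $m$ sufficiently large the image $\pi_m(\mathcal{C}_{X',n})$ is a constructible subset of $\mathcal{J}_m(X')$ and $\mathcal{C}_{X',n} = \pi_m^{-1}(\pi_m(\mathcal{C}_{X',n}))$, so $\mathcal{C}_{X',n}$ is in particular measurable. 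Applying the countable additivity statement in Proposition \ref{p:mot}(i) then formally yields
$$
	\E_{\rm st}(X) = \sum_{n \in |\Sigma| \cap N} \int_{\mathcal{C}_{X',n}} \L^{-\ord_{K_{X'/X}}}\, {\rm d}\mu_{X'}.
$$

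The subtle point, and the one requiring the most care, is the legality of the interchange of countable sum and integral in $\hat{\M}$. The clean way is to refine the partition $\{\mathcal{C}_{X',n}\}_n$ by further intersecting each cylinder with the level sets of the (after multiplying by the Gorenstein index) integer-valued function $\ord_{K_{X'/X}}$, and to check that the dimensions of the resulting weighted measures tend to $-\infty$. This follows from the log-terminality of $X$ established in Proposition \ref{p:div}: the discrepancies satisfy $\nu_i > -1$, hence $\L^{-\ord_{K_{X'/X}}}$ is integrable on all of $\mathcal{J}_\infty(X')$ by \cite[Theorem 6.28]{Ba98}, and this global convergence forces the convergence of the orbit-wise sum in $\hat{\M}$.
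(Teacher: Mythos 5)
Your proposal is correct and follows essentially the same route as the paper's proof: restrict the motivic integral of Proposition \ref{p:Est} to $X'(\O)\cap (G/H)(\K)$ using that arcs lying in the lower-dimensional boundary have measure zero, decompose this set into the countable disjoint union of the cylinders $\mathcal{C}_{X',n}$ via Theorem \ref{t:lattX}, and conclude by Proposition \ref{p:mot}(i) together with the integrability of $\L^{-\ord_{K_{X'/X}}}$ coming from log-terminality. You merely spell out details (the measure-zero step via Proposition \ref{p:mot}(ii), cylindricity via Lemma \ref{l:meas} and Corollary \ref{c:lattX}, and the sum--integral interchange) that the paper leaves implicit.
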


\begin{proof}

Since the $G(\O)$-orbits in $X'(\O)$
which are not contained in $(G/H)(\K)$
have zero motivic measure, we get by Definition \ref{d:Est}:
$$
	\E_{\rm st}(X) =  \int\limits_{X'(\O)} \L^{- {\rm ord}_{K_{X'/X}}} \, {\rm d} \mu_{X'}
		= \int\limits_{X'(\O) \cap (G/H)(\K)} \L^{- {\rm ord}_{K_{X'/X}}} \, {\rm d} \mu_{X'} \,  .
$$
In addition, by Theorem \ref{t:lattX}, $X'(\O) \cap (G/H)(\K)$ is a countable disjoint union of $G(\O)$-orbits
and each of these $G(\O)$-orbits corresponds to a point
$n \in |\Sigma| \cap N$:
$$
	X'(\O) \cap (G/H)(\K) =
		\bigsqcup \limits_{n \in |\Sigma| \cap N } \mathcal{C}_{X',n} .
$$
All $\mathcal{C}_{X',n}$ are cylinders %(cf.\,Lemma\,\ref{l:meas})
whose union is a measurable set.
The lemma is then a consequence of Proposition \ref{p:mot}(i).

\end{proof}

\begin{lemma}   \label{l:main2}
For any lattice point $n \in |\Sigma| \cap N $,
we have
$$ \int\limits_{ \mathcal{C}_{X',n} } \L^{- {\rm ord}_{K_{X'/X}}} \, {\rm d} \mu_{X'} Ê
= [ G/H ]  \, \L^{\omega_X(n)}. $$

\end{lemma}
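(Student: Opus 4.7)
The plan is to show that $\ord_{K_{X'/X}}$ is constant on the $G(\O)$-orbit $\mathcal{C}_{X',n}$, equal to $\omega_{X'}(n)-\omega_X(n)$, where $\omega_{X'}\colon|\Sigma'|\to\R$ is the piecewise linear function attached to the smooth toroidal embedding $X'$ (taking value $-1$ on every primitive ray generator of $\Sigma'$), and then to combine this with the volume computation of Theorem \ref{t:meas}.

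The crucial local step is the order of vanishing of an arc $\nu\in\mathcal{C}_{X',n}$ along each $G$-stable prime divisor of $X'$. Let $\sigma'\in\Sigma'$ be the smallest cone containing $n$ in its relative interior. Since $X'$ is smooth, the primitive generators of $\sigma'$ extend to a $\Z$-basis $e_1,\ldots,e_r$ of $N$ given by the primitive generators of a maximal cone $\sigma\in\Sigma'$ with $\sigma'\subseteq\sigma$. Let $u_1,\ldots,u_r\in\sigma^{\vee}\cap M$ be the dual basis. The expected statement is $\ord_{D_{e_i}}(\nu)=\langle n,u_i\rangle$ for each $i=1,\ldots,r$ (automatically zero when $e_i\notin\sigma'$), and $\ord_{D_\tau}(\nu)=0$ for every other ray $\tau$ of $\Sigma'$. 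Via the toric fibration structure in Corollary \ref{c:lattX}, this reduces to the analogous statement for the $T(\O)$-orbit $\mathcal{C}_{Y,n}$ in the smooth toric fiber $Y_{\Sigma'}$, which is implicit in the explicit parametrisation appearing in the proof of Lemma \ref{l:meas}. Consequently $\ord_{K_{X'/X}}$, being a linear combination of such divisorial orders, is constant on $\mathcal{C}_{X',n}$.

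To identify that constant, insert the above into Proposition \ref{p:div}. The coefficient of a $G$-stable divisor $D_\tau$ in $K_{X'/X}$ can be written uniformly as $-1-\omega_X(e_\tau)$ (for uncolored rays of $\Sigma$ this vanishes because $\omega_X(e_\tau)=-1$, in agreement with the proposition). Thus
$$
\ord_{K_{X'/X}}(\nu) \;=\; \sum_{i=1}^{r}\bigl(-1-\omega_X(e_i)\bigr)\langle n,u_i\rangle \;=\; \omega_{X'}(n)-\omega_X(n),
$$
where the first piece gives $-\sum_i\langle n,u_i\rangle=\omega_{X'}(n)$ (using $\omega_{X'}(e_i)=-1$ and linearity of $\omega_{X'}$ on $\sigma$), and the second piece gives $\omega_X(n)$ by linearity of $\omega_X$ on $\sigma'$, itself inherited from the linearity of $\omega_X$ on the containing cone of $\Sigma$.

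Finally, since $\ord_{K_{X'/X}}$ is constant on $\mathcal{C}_{X',n}$, the integral factors as $\L^{-(\omega_{X'}(n)-\omega_X(n))}\cdot\mu_{X'}(\mathcal{C}_{X',n})$, and Theorem \ref{t:meas} evaluates $\mu_{X'}(\mathcal{C}_{X',n})=[G/H]\L^{\omega_{X'}(n)}$ (the exponent $-\sum_j\langle n,u_j\rangle$ there coincides with $\omega_{X'}(n)$). Multiplying yields $[G/H]\L^{\omega_X(n)}$. The main technical point is the divisorial orders-of-vanishing computation in the second paragraph; everything else is a bookkeeping consequence of Proposition \ref{p:div} and the piecewise linearity of $\omega_X$.
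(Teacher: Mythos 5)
Your argument follows essentially the same route as the paper's proof: constancy of $\ord_{K_{X'/X}}$ on $\mathcal{C}_{X',n}$ read off from the explicit cylinder description in Lemma \ref{l:meas}, the volume formula of Theorem \ref{t:meas}, and the piecewise-linear identity for $\omega_X$; writing the constant as $\omega_{X'}(n)-\omega_X(n)$ is just a convenient repackaging of the paper's bookkeeping with the coefficients $-1-\omega_X(e_{\tau_i'})$ and $-1-\omega_X(e_{\tau_j})$.

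There is, however, one genuine gap. You assert that the minimal cone $\sigma'\ni n$ of $\Sigma'$ is contained in a maximal cone $\sigma\in\Sigma'$ whose primitive generators form a $\Z$-basis of $N$. Smoothness of $X'$ only guarantees that each cone is generated by \emph{part} of a $\Z$-basis; the maximal cones of $\Sigma'$ need not have dimension $r$ (they do exactly when the closed orbits of $X'$ are projective), so in general no such $\sigma$ exists. This matters twice: Lemma \ref{l:meas} and Theorem \ref{t:meas} are stated and proved under the standing assumption that every maximal cone is generated by a $\Z$-basis of $N$, and your dual basis $u_1,\ldots,u_r$ of the semigroup $\sigma^\vee\cap M$ only exists for a full-dimensional smooth cone (otherwise $\sigma^\vee$ is not strictly convex and the parametrisation of $\mathcal{C}_{Y,n}$ must be modified). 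The paper closes exactly this point: since the statement is local, one reduces to the simple variety $X_{\sigma,\F}$, and if $\dim\sigma<r$ one embeds $\sigma$ as a face of an $r$-dimensional colored cone $\hat{\sigma}$ such that $\omega_{\hat{X}}$ restricts to $\omega_X$ on $\sigma$ and the smooth subdivision extends to $\hat{\sigma}$; only after this reduction are Lemma \ref{l:meas} and Theorem \ref{t:meas} applicable. With that reduction supplied, the rest of your computation is correct.
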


\begin{proof}

Let $(\sigma,\F)$ be a colored cone in $\Sigma$
such that $\sigma$ contains $n$.
We remark that the statement
of the lemma is local.
So, it is enough to prove it in the case where $X$ is
the simple horospherical variety corresponding to $(\sigma,\F)$.
%and $X'$ is defined by
%deleting  colors and by a smooth subdivision of $\sigma$.
Furthermore,
we can assume that $\sigma$ has the maximal dimension $r$ (i.e., the unique closed
$G$-orbit in $X$ is projective).
Otherwise we can embed $\sigma$ as a face into
some  $r$-dimensional cone $\hat{\sigma}$ such that the restriction of the
linear function $\omega_{\hat{X}}$ to $\sigma$ coincides with $\omega_X$ and the
smooth subdivision of $\sigma$ extends to a smooth subdivision of $\hat{\sigma}$.
Here, $\hat{X}$ is the simple horospherical $G/H$-embedding
corresponding to the $r$-dimensional colored cone
$(\widehat{\sigma},\F)$.
Thus, it is enough to consider the case where every maximal cone of $\Sigma'$
is generated by a $\Z$-basis of $N$.

For the sake of the simplicity, we set, in the notations of Proposition \ref{p:div}:
$c'_{i} : = -1 - \omega_X(e_{\tau_i'}) $, for $i \in \{1, \ldots, q\}$,
and $c_{j} : = - 1 - \omega_X (e_{\tau_j})$, for $j \in \{t +1,\ldots, s\}$.
Thus,
$$
	K_{X'/X} =  \sum\limits_{i=1}^{q} c'_{i} D'_i
			+ \sum\limits_{j = t+1}^s c_{j}  D_j \, .
$$
Let $n \in |Ê\Sigma| \cap N$.
By the definition of motivic integrals,
$$
	\int\limits_{ \mathcal{C}_{X',n} } \L^{- {\rm ord}_{K_{X'/X}}} \, {\rm d} \mu_{X'} Ê
		= \sum\limits_{\nu \in \mathbb{Q} } \mu_{X'} ( \{ \lambda \in \mathcal{C}_{X',n} \ | \
			{\rm ord}_{K_{X'/X}} (\lambda) = \nu \} )  \, \L^{ - \nu} \,  .
$$
Let $\sigma$ be a $r$-dimensional cone of $\Sigma'$ containing $n$
and generated by a basis $\{e_1,\ldots,e_{r}\}$ of $N$.
% such that $e_1,\ldots,e_{s}$ generate $\sigma$ for some $s \in \{1, \ldots,r\}$.

Its dual basis, $\{u_1,\ldots,u_r \}$,
is a basis of the semi-group $\sigma^\vee \cap M$.
Possibly renumbering the vectors $e_1,\ldots,e_{r}$,
we can assume that there exist $l \in \{ 1,\ldots, q\}$
and $k \in \{ 1 ,\ldots, s \}$ such that, in the notations of Proposition \ref{p:div},
$\{e_{1}, \ldots, e_{l}\}$ is a part of $\{e_{\tau'_1}, \ldots, e_{\tau'_q}\}$,
$\{e_{l+1}, \ldots, e_{l+k}\}$ is a part of $\{e_{\tau_1}, \ldots, e_{\tau_t}\}$
and $\{e_{l+k+1}, \ldots, e_{r}\}$ is a part of $\{e_{\tau_{t+1}}, \ldots, e_{\tau_s}\}$.

It follows from the description of $ \mathcal{C}_{X',n}$ (see the proof of Lemma\,\ref{l:meas})
that, for any $\lambda \in  \mathcal{C}_{X',n}$,
$$
	{\rm ord}_{K_{X'/X}} (\lambda) =  \sum\limits_{i=1}^{l} c'_{i} \langle n , u_i \rangle
						 + \sum\limits_{j=l+k+1}^{r} c_{j} \langle n , u_j \rangle .
$$
As a result, we get:
$$
	\int\limits_{ \mathcal{C}_{X',n} } \L^{- {\rm ord}_{K_{X'/X}}} \, {\rm d} \mu_{X'} Ê
		= \mu_{X'} (\mathcal{C}_{X',n} )
			\, \L^{ -   \sum\limits_{i=1}^{l} c'_{i} \langle n , u_i \rangle
	 		- \sum\limits_{j=l+k+1}^{r} c_{j} \langle n , u_j \rangle}.
$$
In addition, by Theorem \ref{t:meas},
$$Ê
	\mu_{X'} (\mathcal{C}_{X',n} )
		= [ G/H ] \, \L^{ - \sum\limits_{ j=1 }^{r} \langle n , u_j \rangle} \, .
$$
So, it only remains to show that
$\omega_ X ( n )= - \sum\limits_{ j=1 }^{r} \langle n , u_j \rangle
   -   \sum\limits_{i=1}^{l} c'_{i} \langle n , u_i \rangle
   -   \sum\limits_{j=l+k+1}^{r} c_{j} \langle n , u_j \rangle $.
By the properties (P1), (P2) and (P3) of $\omega_X$, one has:
$$Ê
	\omega_X ( n )  = \omega_X ( \sum\limits_{ j=1 }^{r} \langle n , u_j  \rangle e_{j})
		= \sum\limits_{i=1}^{l} \langle n , u_i \rangle  \omega_X(e_{i})
		- \sum\limits_{j=l+1}^{l+k}  \langle n , u_j \rangle
  		+ \sum\limits_{j=l+k+1}^{r}   \langle n , u_j \rangle \omega_X(e_{j})
$$
$$Ê
	\qquad \qquad \qquad = - \sum\limits_{ j=1 }^{r} \langle n , u_j \rangle
   		-   \sum\limits_{i=1}^{l} c'_{i} \langle n , u_i \rangle
   		-   \sum\limits_{j=l+k+1}^{r} c_{j} \langle n , u_j \rangle \, .
$$
Then, the expected expression for $\omega_X(n)$ follows.
%Indeed, $n \in \sigma$ forces $\langle n , u_j \rangle =0$ for any $j\in\{ s+1,\ldots,r \}$,
%so $\sum\limits_{ j=1 }^{s} \langle n , u_j \rangle  =\sum\limits_{ j=1 }^{r} \langle n , u_j \rangle$.

\end{proof}

As noticed before, Lemma \ref{l:main1} together with Lemma \ref{l:main2}
complete the proof of Theorem \ref{t:main}.

\medskip

\begin{ex}   \label{ex:Q}

Consider the case where $G=SL_3(\C)$,
$B$ is the Borel subgroup of $G$
consisted of upper triangular matrices of $G$, $S=\{\beta_1,\beta_2\}$
and $H=U$.
Then $G/H$ is a quasi-affine homogeneous horospherical variety
whose affine closure is the $5$-dimensional affine quadric
$$ Q= \{ (x_1,x_2,x_3,y_1,y_2,y_3) \in \A^6 \ | \ x_1 y_1 + x_2 y_2 + x_3 y_3= 0 \} \, ; $$
$Q$ is the affine cone over the Grassmannian $G(2,4)$.
Denote by $\check{\beta_1}$ and $\check{\beta_2}$ the coroots of $\beta_1$ and $\beta_2$ respectively.
The representation of $SL_3(\C)$ on $\A^6$ is
the sum of two fundamental $3$-dimensional
irreducible representations with the dominant weights  $\varpi_{\beta_1}$,
$\varpi_{\beta_2}$ and $Q$ has for maximal colored cone
$(\sigma, \{\beta_1, \beta_2\})$
where $\sigma$ is the cone of $N_\R$ generated by ${\check{\beta}_1}|_M$
and ${\check{\beta}_2}|_M$.
The quadric $Q$ admits four $G$-orbits:
$0$, two copies of $\A^3 \smallsetminus 0$, and the dense orbit $G/U$.
We have $[G/U] =  (\L^2-1)(\L^3 -1)$.
Using this decomposition into $G$-orbits of $Q$,
one gets $[Q] = \L^2(\L^3 + \L -1)$.
%\qA{Fullfill it!!!}.
%
On the other hand, by Theorem \ref{t:main},
$$ \E_{\rm st}(Q) = [ G/U ]\,  \left( \sum\limits_{k\ge 0} \L^{-2k} \right)^{\!\!2}
	=  \displaystyle{ \frac{ (\L^2-1)(\L^3 -1) }{ (1 - \L^{-2} )^2}}
	= \displaystyle{ \frac{\L^4  (\L^2+\L+1)  }{\L+1}} \, .
$$
	
\smallskip

Let us show how this result can be obtained
using resolutions of singularities of $Q$.
We consider two different resolutions:
the blowing-up of the point $0 \in Q$
and a decolorization of $Q$.

\smallskip

1) Let $p \, : \, \hat{Q}  \to Q$ be the blowing-up of $0 \in Q$
and $D$ the exceptional divisor.
We have $K_{\hat{Q}} - p^* K_Q = 3 D$ and
$$
	[ \hat{Q} \smallsetminus D ] = [Q] - 1 =  \L^2(\L^3 + \L -1) - 1  .
$$
On the other hand, $D \simeq G(2,4)$ and $[ D ]$ can be readily computed using the Betti numbers.
Then by Definition \ref{d:Est}, we get:
$$
	\E_{\rm st}(Q)
 = [ \hat{Q} \smallsetminus  D ] +  [ D ] \left( \displaystyle{\frac{\L-1}{\L^{4}-1}} \right)
 = \displaystyle{\frac{ \L^4 (\L^2 + \L +1) }{\L +1} } \, .
$$

\smallskip

2) Let $Q'$ be the smooth toroidal variety
corresponding to the uncolored fan obtained from $\Sigma$
and $f' : Q' \to Q$ the corresponding proper birational
$G$-morphism.
Note that $Q'$ is the homogeneous vector bundle on $G/B$
associated with the representation of $B$ on $\A^2$ with weights
the fundamental weights $\varpi_{\beta_1}$, $\varpi_{\beta_2}$.
%Here, we go directly to a smooth variety by removing colors.
The exceptional locus of $f'$ has two irreducible components,
$D_1$ and $D_2$,
and $K_{Q'/Q} = D_1 + D_2$.
The set $Q' \smallsetminus (D_1 \cup D_2)$
 is isomorphic to the open orbit $G/U$
and $D_1 \smallsetminus (D_1 \cap D_2)$ is a locally trivial fibration over
$\A^3 \smallsetminus 0$ with fiber $\P^1$.
Moreover, $D_1\cap D_2$ is the unique closed $G$-orbit
which is here isomorphic to $G/B$.
Hence, by Definition \ref{d:Est},
\begin{eqnarray*}
\E_{\rm st} (Q) = [Q' \smallsetminus (D_1 \cup D_2) ]
        + 2\, \displaystyle{\frac{ [ D_1 \smallsetminus (D_1 \cap D_2) ] }{\L+1}}
        + \displaystyle{\frac{ [ D_1 \cap D_2 ] }{(\L+1)^2}}
 =   \displaystyle{\frac{ \L^4 (\L^2+\L+1)}{\L+1}}  \, .
\end{eqnarray*}

\end{ex}

\bigskip

%%%%%%%%%%%%%%%%%%%%%%%%%%%%%%
%%%%%%%%%%%%%%%%%%%%%%%%%%%%%%
%%%%
%%%%
\section{Smoothness criterion}           \label{S:Smo}
%%%%
%%%%
%%%%%%%%%%%%%%%%%%%%%%%%%%%%%%
%%%%%%%%%%%%%%%%%%%%%%%%%%%%%%

We obtain in this section (Theorem \ref{t:smo}) a smoothness criterion for
locally factorial horospherical embeddings
in term of their stringy Euler numbers
(cf.\,Definition \ref{d:Eul}).
Since the smoothness condition is a local condition,
we can restrict our study to the case of simple horospherical embeddings.

\smallskip

Recall that a normal variety is called {\em locally factorial}
if any Weil divisor is a Cartier divisor.
The following criterion for the locally factorial condition
can be readily extracted from \cite[Proposition 3.1]{Br89} and \cite[Proposition 4.2]{Br93}:

\begin{thm}   \label{t:lf}

Let $X$ be a simple horospherical $G/H$-embedding with maximal cone $(\sigma,\F)$.
Then, $X$ is locally factorial if and only if
the following two conditions are satisfied:

{\rm (L1)}  the restriction to $\{\varDelta_\alpha \ | \ \alpha \in \F\}$ of the map $\varrho$ is injective;

{\rm (L2)} \! $\sigma$ is generated by part of a basis of $N$ which contains
all $\varrho_\alpha$ for $\alpha \in \F$.

\end{thm}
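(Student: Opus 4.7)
The plan is to translate local factoriality into the surjectivity of a single evaluation map from the character lattice $M$, and then to recognize this surjectivity as the conjunction of (L1) and (L2). Let $X = X_{\sigma,\F}$ be simple with unique closed $G$-orbit $Z := Z_{\sigma,\F}$.

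First, among the prime $B$-stable Weil divisors of $X$, those containing $Z$ are precisely the $G$-stable divisors $D_i$ attached to the rays of $\sigma$ together with the closures $\overline{\varDelta_\alpha}$ for $\alpha \in \F$; the remaining $\overline{\varDelta_\beta}$ for $\beta \in S \setminus I \setminus \F$ avoid $Z$. Since $X$ is simple, every $G$-invariant neighborhood of $Z$ equals $X$ (by Sumihiro's lemma), so $X$ is locally factorial if and only if every Weil divisor supported on $\{\overline{\varDelta_\alpha}\}_{\alpha \in \F} \cup \{D_i\}_i$ is principal modulo divisors missing $Z$. Using the standard description of principal $B$-stable divisors on horospherical embeddings from \cite{Br89,Br93}, namely
\[
\mathrm{div}(\chi^m) \;=\; \sum_{\alpha \in S \setminus I} \langle m, \varrho_\alpha\rangle \, \overline{\varDelta_\alpha} \;+\; \sum_i \langle m, \varrho_i\rangle \, D_i \qquad (m \in M),
\]
this local factoriality is equivalent to surjectivity of the evaluation map
\[
\mathrm{ev}\, : \, M \longrightarrow \Z^{\F} \oplus \Z^{\sigma(1)}, \qquad m \longmapsto \bigl((\langle m, \varrho_\alpha\rangle)_{\alpha \in \F},\, (\langle m, \varrho_i\rangle)_i\bigr).
\]

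Next, I would show that the surjectivity of $\mathrm{ev}$ is equivalent to (L1) and (L2). By duality over $\Z$, $\mathrm{ev}$ is surjective exactly when the family $\{\varrho_\alpha\}_{\alpha \in \F} \cup \{\varrho_i\}_i$ is part of a $\Z$-basis of $N$. Since the rays of $\sigma$ are by construction generated by precisely this union of vectors, this condition is exactly (L2), and it forces the $\varrho_\alpha$ to be pairwise distinct, i.e., (L1). Conversely, (L1) and (L2) together exhibit such a $\Z$-basis of $N$, whose dual basis in $M$ then surjects onto the standard basis of $\Z^{\F} \oplus \Z^{\sigma(1)}$.

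The main obstacle is the reduction in the first step: correctly extracting, from the general description of $B$-stable Cartier divisors on spherical varieties, the purely local factoriality condition at the unique closed orbit $Z$. This is precisely the content of \cite[Proposition 3.1]{Br89} and \cite[Proposition 4.2]{Br93}; once it is granted, the remainder is a linear-algebra exercise over $\Z$.
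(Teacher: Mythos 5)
Your overall route is the one the paper itself intends: the paper gives no proof of Theorem \ref{t:lf} beyond saying it is extracted from \cite[Proposition 3.1]{Br89} and \cite[Proposition 4.2]{Br93}, and your reduction of local factoriality to surjectivity of the evaluation map on $M$ (via the Cartier criterion for $B$-stable divisors at the unique closed orbit), followed by lattice duality, is exactly that extraction. The duality step itself is fine: surjectivity of $m\mapsto(\langle m,\varrho_D\rangle)_D$ over the $B$-stable prime divisors containing $Z$ is equivalent to these $\varrho_D$ being pairwise distinct and forming part of a $\Z$-basis of $N$, which, since $\sigma$ is by definition generated by $\{\varrho_\alpha\}_{\alpha\in\F}\cup\{\varrho_i\}$, is (L1) together with (L2).

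The one point that needs repair is your indexing of the evaluation map by $\Z^{\F}\oplus\Z^{\sigma(1)}$ and the accompanying phrase ``the $G$-stable divisors $D_i$ attached to the rays of $\sigma$''. The $G$-stable prime divisors of $X_{\sigma,\F}$ correspond only to those rays of $\sigma$ which contain no color $\varrho_\alpha$, $\alpha\in\F$ (a ray carrying a color corresponds, by the codimension count coming from Proposition \ref{p:orb}, to an orbit of codimension at least $2$); they do not correspond to all rays. Read literally, your map $\mathrm{ev}\colon M\to\Z^{\F}\oplus\Z^{\sigma(1)}$ double-counts every ray spanned by a color, and then both claimed equivalences fail: in the paper's Example \ref{ex:Q}, the quadric $Q$ has no $G$-stable divisors at all, both rays of $\sigma$ are spanned by the colors, $Q$ is locally factorial and satisfies (L1)--(L2), yet your literal map is not surjective. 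The fix is simply to take the index set to be $\F\sqcup\D_X$, i.e., the actual $B$-stable prime divisors containing $Z$, as in Brion's criterion. With that correction you should also record, for the converse implication, the fact that each $\varrho_i$ is the primitive generator of a colorless extremal ray of $\sigma$; this is what guarantees that under (L2) the $\varrho_i$ occur among the chosen partial basis and are automatically distinct from every $\varrho_\alpha$, $\alpha\in\F$, so that (L1) and (L2) really do exhibit the required partial $\Z$-basis and hence surjectivity.
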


Recall that the {\em usual Euler number} $e(V)$ of any complex algebraic variety $V$ is defined by
$$
	e(V) : = E (V ; 1,1) .
$$

\begin{defi}     \label{d:Eul}

Let $X$ be a $d$-dimensional normal $\Q$-Gorenstein variety.
Adopt the notations of Definition \ref{d:Est}
and define the {\em stringy Euler number} $e_{\rm st}(X)$ of $X$ by
$$ Ê
	e_{\rm st}(X) :=  \sum\limits_{J \subseteq \{1,\ldots, l \}} e(D_J^0) \, \prod\limits_{j \in J}
		\, \displaystyle{\frac{ 1 }{ \nu_j +1 }}.
$$

\end{defi}

The stringy $E$-function of $X$ was defined in Definition \ref{d:Est2}.
Note that $e_{\rm st}(X)$ is nothing but $ E_{\rm st} (X ; 1 ,1)$.
We refer to \cite{Ba98} of \cite{Ba99} for more details about the stringy Euler numbers.

\begin{thm}   \label{t:smo}

Let $X$ be a simple locally factorial horospherical $G/H$-embedding.
Assume that the maximal cone associated with $X$ has dimension $r$.
Then one has $e_{\rm st}(X) \ge e (X)$, and
the equality holds if and only if $X$ is smooth.

\end{thm}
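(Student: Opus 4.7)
My plan is to compute $e(X)$ and $e_{\rm st}(X)$ separately, reduce the statement to a purely Weyl-theoretic inequality, and then attack the converse via the Luna slice theorem.

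For $e(X)$, I decompose $X$ into its finitely many $G$-orbits. By Proposition \ref{p:orb}, each orbit $Z_{\tau,\F_\tau}$ fibers over $G/P_{I\cup\F_\tau}$ with fiber a torus of rank $r-\dim\tau$. Since the Euler number of a positive-dimensional torus vanishes, only the closed orbit $Z_{\sigma,\F}$ (for which $\dim\sigma=r$ and $M_\sigma=0$) contributes, giving $e(X)=e(G/P_{I\cup\F})$. For $e_{\rm st}(X)$, I combine Theorem \ref{t:main} with local factoriality: by Theorem \ref{t:lf}, $\sigma$ is generated by a $\Z$-basis $e_1,\ldots,e_r$ of $N$ that contains all $\varrho_\alpha$ for $\alpha\in\F$. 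Setting $a_i:=-\omega_X(e_i)$ (so $a_i=a_\alpha$ when $e_i=\varrho_\alpha$ and $a_i=1$ otherwise), every $n\in\sigma\cap N$ is uniquely $\sum c_i e_i$ with $c_i\ge 0$, hence
$$\sum_{n\in\sigma\cap N}(uv)^{\omega_X(n)}=\prod_{i=1}^r\frac{1}{1-(uv)^{-a_i}}.$$
Combined with $E(G/H;u,v)=(uv-1)^r E(G/P;u,v)$ and the limit $\lim_{t\to 1}\tfrac{t-1}{t^{a_i}-1}=\tfrac{1}{a_i}$, this yields $e_{\rm st}(X)=e(G/P)/\prod_{\alpha\in\F}a_\alpha$.

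The natural fibration $G/P\twoheadrightarrow G/P_{I\cup\F}$ has fiber $P_{I\cup\F}/P_I$, so $e(G/P)=e(G/P_{I\cup\F})\cdot e(P_{I\cup\F}/P_I)$, and combining the two computations gives
$$\frac{e_{\rm st}(X)}{e(X)}=\frac{|W_{I\cup\F}/W_I|}{\prod_{\alpha\in\F}a_\alpha}.$$
The theorem therefore reduces to the Weyl-theoretic inequality
$$|W_{I\cup\F}/W_I|\geq\prod_{\alpha\in\F}a_\alpha \qquad (\star)$$
with equality iff $X$ is smooth. The implication ``smooth $\Rightarrow$ equality'' is immediate from the Remark after Definition \ref{d:Est2}, which gives $E_{\rm st}(X;u,v)=E(X;u,v)$ when $X$ is smooth.

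For $(\star)$ and its converse, I would apply the Luna slice theorem at a point of the closed $G$-orbit. The slice is an affine horospherical variety $S$ for the Levi $L:=L_{I\cup\F}$, with the same colored cone $(\sigma,\F)$ but now $I\cup\F$ is the full simple root set of $L$. A crucial simplification is that $\rho_G-\rho_L=\tfrac12\sum_{\gamma\in\Phi_G^+\setminus\Phi_L^+}\gamma$ is $W_L$-invariant (since $W_L$ permutes $\Phi_G^+\setminus\Phi_L^+$), so $\langle\rho_G-\rho_L,\check\alpha\rangle=0$ for every $\alpha\in I\cup\F$; consequently the coefficient $a_\alpha$ computed in $G$ coincides with its analog $a_\alpha^L$ computed in $L$. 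Thus $(\star)$ for $X$ is exactly the corresponding inequality for $S$, and by Luna's theorem $X$ is smooth at the closed orbit iff $S$ is smooth iff $S$ is a linear $L$-representation whose unique closed orbit is the origin.

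The main obstacle is then proving $|W_L/W_I|\geq\prod_{\alpha\in\F}a_\alpha^L$, with equality exactly when $S$ is such a representation. My plan is to factor both sides along the connected components of the Dynkin subdiagram on $I\cup\F$ (reducing to the case where $L$ is simple) and then to invoke the classification of smooth affine simple horospherical varieties---equivalently, of prehomogeneous vector spaces whose unique closed orbit is the origin. In the smooth cases, one verifies directly an equality of the Weyl-group index with $\prod a_\alpha^L$; in the non-smooth cases, additional cosets in $W_L/W_I$ force a strict inequality.
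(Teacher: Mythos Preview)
Your computation of $e(X)$ and $e_{\rm st}(X)$ and the reduction to the Weyl-theoretic ratio
\[
\frac{e_{\rm st}(X)}{e(X)}=\frac{|W_{I\cup\F}|}{|W_I|\prod_{\alpha\in\F}a_\alpha}
\]
is correct and is exactly what the paper establishes in Proposition~\ref{p:Eul}. Your observation that ``smooth $\Rightarrow$ equality'' follows immediately from $E_{\rm st}=E$ is also fine. The Luna/local-structure reduction to the Levi $L=L_{I\cup\F}$ is valid (and your check that $\langle\rho_G-\rho_L,\check\alpha\rangle=0$ for $\alpha\in I\cup\F$, hence $a_\alpha^G=a_\alpha^L$, is correct), but it is heavier machinery than needed: since both sides of $(\star)$ depend only on the Dynkin subdiagram on $I\cup\F$, the paper simply assumes $I\cup\F=S$ connected without invoking any slice theorem.

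The genuine gap is in the proof of $(\star)$ itself. After the reduction you are left with infinitely many configurations (for each simple type, every partition $S=I\sqcup\F$), so ``invoking the classification'' and asserting that ``additional cosets force a strict inequality'' is not a proof---you still need a uniform argument. The paper supplies this in Lemma~\ref{l:eq}: order the colors so that $a_{k+1}\le\cdots\le a_\ell$, and prove two bounds. First (Lemma~\ref{l:ht}), the exponents $m_1'\le\cdots\le m_k'$ of $I$ satisfy $m_i'\le m_i$. Second (Lemma~\ref{l:root}), for each $j$ one has $a_j\le m_j+1$, with equality forcing $\varpi_{\alpha_j}$ to be minuscule in its connected component; the proof of this bound is the substantive combinatorial step, using the $\theta_{\check S}$-string and a bijection between certain sets of positive roots. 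Multiplying gives $|W_I|\prod_j a_j\le\prod_i(m_i+1)=|W_S|$, i.e.\ $(\star)$. The equality analysis then goes type by type and recovers exactly the configurations (C1)--(C3) of Pauer's criterion (Proposition~\ref{p:PP}). Your plan would need something of this kind to close.
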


Our assumption that the maximal cone associated with $X$ has dimension $r$
means that the closed orbit of $X$ is projective.
The proof of Theorem\,\ref{t:smo}
will be achieved at the end of the section.

\begin{ex}

The affine quadric $Q$ introduced in Example\,\ref{ex:Q}
yields an example of horospherical variety which is locally factorial
but not smooth,
$$
	e_{\rm st} (Q) = \frac{3}{2}
		> e(Q) = 1.
$$

\end{ex}

\begin{ex}    \label{ex:Grass}

Here we give an example of a singular horospherical variety $X$
for which the stringy $E$-function is polynomial.

Consider the case where $G=SL_4(\C)$, $B$ is the set of upper triangular matrices of $G$
and set $S = \{\beta_1,\beta_2,\beta_3\}$.
The representation of $G$ on
$\C^4 \oplus \wedge^2 \C^4 $
is the sum of two fundamental representations with
the dominant weights $\varpi_{\beta_1}$ and $\varpi_{\beta_2}$.
The stabilizer of $(e_1 , e_1 \wedge e_2) \in  \C^4 \oplus \wedge^2 \C^4$
in $G$ is the horospherical subgroup $H = P_{ \{ \beta_3 \} }  \cap (\ker \varpi_{\beta_1} \cap \ker \varpi_{\beta_2})$
where $(e_1,e_2,e_3,e_4)$ is the canonical basis of $\C^4$.
We have $\dim G/H =7$ and ${\rm rk} \, G/H = 2$.
Let $X \subset \wedge^2 \C^5 \simeq \C^4 \oplus \wedge^2 \C^4$ be the closure of the $G$-orbit
of $(e_1,e_1 \wedge e_2)$ in $\C^4 \oplus \wedge^2 \C^4$.
Then $X$ is  the affine cone over the Grassmannian
$G(2,5)$ and contains three more $G$-orbits:
$(\wedge^2 \C^4 \smallsetminus 0)$,
$(\C^4 \smallsetminus 0)$ and $0$.
From this, we get:
$[X] = \L^7 + \L^5 - \L^2$.
The maximal colored cone corresponding to $X$
is  $(\sigma,\{ \beta_1, \beta_2 \})$
where $\sigma$ is the cone of $N_{\R}$ generated by ${\check{\beta}_1}|_M$ and ${\check{\beta}_2}|_M$.
We have $a_{\beta_1} = 2$ and $a_{\beta_2} = 3$.
Hence, by Theorem\,\ref{t:main},
$$
	\E_{\rm st}(X) =
	\, \displaystyle{\frac{ (\L-1)^2  \, (\L+1) \, (\L^2+1) \, (\L^2 + \L +1) }
				{(1 - \L^{-2})(1 - \L^{-3})}}  =  \L^5 (\L^2 +1) \, .
$$
We have,
$e_{\rm st}(X) = 2 > e(X) = 1.$

\end{ex}

For $S' \subseteq S$, denote by $\Gamma_{S'}$ the Dynkin diagram corresponding to $S'$;
the vertices of $\Gamma_{S'}$ are the elements of $S'$.
In \cite[\S3.5]{Pau83}, Pauer gives a smoothness criterion
for any $G/H$-embedding in the case where $H=U$;
for the general case, see  \cite[Theorem 2.6]{Pa07} or \cite[Theorem 28.10]{Ti10}.
Recall here the criterion:

\begin{prop}   \label{p:PP}

Let $X$ be a simple locally factorial
horospherical $G/H$-embedding with maximal colored cone $(\sigma,\F)$
and let $I \subseteq S$ be such that $N_G(H) = P_I$.
Then, $X$ is smooth if and only if any connected component $\Gamma$
of $\Gamma_{I \cup \F}$ verifies one of the following conditions:

\smallskip

{\rm (C1)} \; $\Gamma$ is a Dynkin diagram of type ${\bf A}_\ell$, $\ell \ge 1$,
	and $\Gamma$ contains exactly one vertex in $\F$ which is extremal:

$
\begin{Dynkin}
	\Dbloc{\Dbullet\Deast}
	\Dbloc{\Dcirc\Dwest\Deast}
	\Dbloc{\Dcirc\Dwest\Deast}
	\Dbloc{\Ddots}
	\Dbloc{\Dcirc\Dwest\Deast}
	\Dbloc{\Dcirc\Dwest\Deast}
	\Dbloc{\Dcirc\Dwest}
\end{Dynkin}
$

{\rm (C2)} \; $\Gamma$ is a Dynkin diagram of type ${\bf C}_{\ell}$, $\ell \ge 3$,
	and $\Gamma$ contains exactly one vertex in $\F$ which is the simple extremal one:

$
\begin{Dynkin}
	\Dbloc{\Dbullet\Deast}
	\Dbloc{\Dcirc\Dwest\Deast}
	\Dbloc{\Dcirc\Dwest\Deast}
	\Dbloc{\Ddots}
	\Dbloc{\Dcirc\Dwest\Deast}
	\Dbloc{\Dcirc\Dwest\Ddoubleeast}
	\Dleftarrow
	\Dbloc{\Dcirc\Ddoublewest}
\end{Dynkin}
$

{\rm (C3)} \; $\Gamma$ is any Dynkin diagram whose vertices are all in $I$.

\end{prop}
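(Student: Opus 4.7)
The plan is to compute $e_{\rm st}(X)$ and $e(X)$ explicitly and reduce the inequality to a Weyl-theoretic statement that can be verified by Dynkin-diagram case analysis. Since $X$ is locally factorial and simple with maximal cone $(\sigma,\F)$ of dimension $r$, Theorem \ref{t:lf}(L2) forces $\sigma$ to be generated by a $\Z$-basis of $N$: exactly $|\F|$ of the generators are the colored rays $\varrho_\alpha$ ($\alpha \in \F$) with weights $a_\alpha$, and the remaining $r - |\F|$ are uncolored $G$-stable divisor rays, which by property (P1) of $\omega_X$ have weight $1$.

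Using Theorem \ref{t:main}, the sum $\sum_{n \in \sigma \cap N} \L^{\omega_X(n)}$ factors as $\prod_{i=1}^r (1 - \L^{-a_i})^{-1}$, and combined with $[G/H] = [G/P](\L-1)^r$ this yields
\begin{equation*}
E_{\rm st}(X; u,v) = E(G/P; u,v) \prod_{i=1}^r \frac{(uv - 1)(uv)^{a_i}}{(uv)^{a_i} - 1}.
\end{equation*}
Passing to the limit $u,v \to 1$ (L'Hopital on each factor) gives $e_{\rm st}(X) = e(G/P_I)/\prod_{\alpha \in \F} a_\alpha$. For $e(X)$, the $G$-orbits of $X$ are $Z_{\tau,\F'}$ indexed by faces of $(\sigma, \F)$, and by Proposition \ref{p:orb} each is a torus fibration over $G/P_{I \cup \F'}$ of torus rank $r - \dim \tau$; since $e(\C^*) = 0$, only the closed orbit $\tau = \sigma$ contributes, giving $e(X) = e(G/P_{I \cup \F})$. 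Via the classical formula $e(G/P_J) = |W|/|W_J|$, the inequality $e_{\rm st}(X) \ge e(X)$ reduces to
\begin{equation*}
|W_{I\cup\F}|/|W_I| \geq \prod_{\alpha \in \F} a_\alpha,
\end{equation*}
and both sides factor over the connected components of $\Gamma_{I \cup \F}$: the Coxeter system splits, while $a_\alpha = 2 - \langle 2\rho_I, \check\alpha \rangle$ depends only on positive roots of $\Phi_I$ in the same component as $\alpha$ (since $\check\alpha$ is orthogonal to simple roots in other components). So it suffices to prove, for each connected component $\Gamma$ of $\Gamma_{I\cup\F}$ with $\F_\Gamma := \F \cap \Gamma$ and $I_\Gamma := I \cap \Gamma$, the local inequality
\begin{equation*}
|W(\Gamma)|/|W(I_\Gamma)| \geq \prod_{\alpha \in \F_\Gamma} a_\alpha,
\end{equation*}
with equality if and only if $\Gamma$ satisfies one of the smoothness conditions (C1), (C2), (C3) of Proposition \ref{p:PP}.

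The main obstacle is this local inequality, which requires a finite but genuine Dynkin-by-Dynkin analysis. The equality cases are verified directly: in (C1), $\Gamma = A_\ell$ with a single extremal color $\alpha$ yields ratio $(\ell+1)!/\ell! = \ell+1$, matching $a_\alpha = 2 - \langle 2\rho_{A_{\ell-1}}, \check\alpha \rangle = \ell+1$; in (C2), $\Gamma = C_\ell$ with the long extremal color $\alpha_1$ gives $2^\ell \ell!/(2^{\ell-1}(\ell-1)!) = 2\ell$, matching $a_{\alpha_1} = 2\ell$; (C3) is trivial. Strict inequality in every remaining configuration is checked type by type using $a_\alpha = 2 - \sum_{\beta > 0,\, \beta \in \Phi_{I_\Gamma}} \langle \beta, \check\alpha \rangle$ together with explicit Weyl-group orders. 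For example, in $A_\ell$ with a single non-extremal color $\alpha_k$ one still has $a_{\alpha_k} = \ell + 1$, but the ratio jumps to the binomial coefficient $\binom{\ell+1}{k}$, strictly greater than $\ell + 1$ for $2 \le k \le \ell - 1$; with several colors in $A_\ell$ the ratio becomes a multinomial coefficient that strictly exceeds $\prod a_\alpha$. Analogous computations handle $B_\ell$ and $D_\ell$, and the exceptional types $E_6, E_7, E_8, F_4, G_2$ contribute only finitely many cases to enumerate. This type-by-type verification is mechanical but constitutes the principal labor of the proof.
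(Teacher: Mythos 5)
There is a genuine gap, and it is structural rather than computational. Your argument establishes (i) a formula $e_{\rm st}(X)=e(G/P_I)/\prod_{\alpha\in\F}a_\alpha$ and $e(X)=e(G/P_{I\cup\F})$, and (ii) the purely Weyl-theoretic statement that $|W_{I\cup\F}|/|W_I|\ge\prod_{\alpha\in\F}a_\alpha$ with equality exactly in the configurations (C1)--(C3). Combined with the general fact that smoothness forces $E_{\rm st}=E$, this yields only one implication of the proposition: if $X$ is smooth, then every component of $\Gamma_{I\cup\F}$ is of type (C1), (C2) or (C3). It gives nothing for the converse. The implication ``$e_{\rm st}(X)=e(X)\Rightarrow X$ smooth'' is not a general principle you may invoke; for horospherical varieties it is precisely the nontrivial content of Theorem \ref{t:smo}, and in the paper that theorem is \emph{deduced from} Proposition \ref{p:PP} (which is quoted from Pauer, Pasquier and Timashev, not proved). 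So using the Euler-number equality to conclude smoothness in the configurations (C1)--(C3) is circular; to prove the ``if'' direction one needs an actual geometric argument (local structure theorem, identification of the simple embedding with a smooth local model such as $G\times_P\C^{\ell+1}$ or $G\times_P\C^{2\ell}$ in cases (C1), (C2), and the toroidal case (C3)), none of which appears in your proposal. The paper is explicit about this limitation: its remark after Theorem \ref{t:smo} says the stringy machinery gives ``another proof for the first implication of Pauer's criterion'' only.

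A secondary point: you assume from the outset that $\sigma$ has full dimension $r$, but Proposition \ref{p:PP} makes no such hypothesis. When $\dim\sigma<r$, every $G$-orbit of $X$ fibers over a flag variety with fiber a torus of positive dimension, so $e(X)=e_{\rm st}(X)=0$ and the numerical comparison is vacuous; even the forward implication would then require a separate reduction. (Your component-by-component computations themselves are fine --- e.g.\ $a_{\alpha_k}=\ell+1$ versus the ratio $\binom{\ell+1}{k}$ in type ${\bf A}_\ell$, and $2\ell$ in type ${\bf C}_\ell$, though the colored vertex there is the short extremal root, not the long one --- and they essentially reproduce Lemma \ref{l:root}, Lemma \ref{l:eq} and Proposition \ref{p:Eul} of the paper; the missing piece is the geometric ``if'' direction.)
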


\begin{ex}

1) The standard representation $(\C^{\ell +1},\varpi_1)$ of $G=SL_{\ell +1}(\C)$
is a smooth affine horospherical variety corresponding to the situation (C1).
Namely, the dense orbit $\C^{\ell+1} \smallsetminus 0$ of $\C^{\ell +1}$
is isomorphic to $G/H$ where $H$ is the kernel in the standard maximal parabolic $P$
whose Levi part contains the $\alpha_j$-root subgroups, for $j=2,\ldots,\ell$,
of the restriction to $P$ of $\varpi_1$.

2) The standard representation $(\C^{2\ell},\varpi_1)$ of $G=Sp_{2\ell}(\C)$
is a smooth affine horospherical variety corresponding to the situation (C2).
We have the same description of the dense orbit as in 1):
The dense orbit $\C^{2\ell} \smallsetminus 0$ of $\C^{2\ell}$
is isomorphic to $G/H$ where $H$ is the kernel in the standard maximal parabolic $P$
whose Levi part contains the $\alpha_j$-root subgroups, for $j=2,\ldots,\ell$,
of the restriction to $P$ of $\varpi_1$.

3) The case where $\F$ is empty (situation (C3)) corresponds to locally factorial toroidal embeddings
which are known to be smooth.

\end{ex}

We state several technical lemmas useful for the proof of Theorem\,\ref{t:smo}.
Our main reference for basics on Lie algebras and root systems is \cite{OV}.
Assume that $\Gamma_S$ is connected.
Let $I$ be a subset of $S$ and
%of cardinality $\ell'$ such that $\Gamma_{S'}$ is connected.
let us introduce standard related notations.

\textbullet \; We denote by $\root$ the root system of $G$,
by $\root^+$ the set of positive root of $\root$,
by $\root_{I}$
the root subsystem of $\root$ generated by $I$
and by $\root_{I}^+$ the set $\root_{I} \cap \root^+$.

\textbullet \;
For any $\gamma \in \root$, we denote by $\check{\gamma}$
its coroot, and set $\check{S}:=\{Ê\check{\beta} \, ; \, \beta \in S\}$.

\textbullet \; If $\Gamma_{I}$ is connected,
we denote by $W_{I}$ the Weyl group associated with
$\root_{I}$, that is the subgroup of $GL(V)$ where $V:=\Z \root_{I} \otimes_{\Z} \R$
	generated by the reflections,
	$$s_{\alpha}\, : \,  V  \rightarrow V , \
			x  \mapsto  x - \langle x,\check{\alpha} \rangle \, \alpha,
		\qquad \alpha \in I.$$

\textbullet \; The exponents of $S$ (or $\check{S}$) will be denoted by $m_1, \ldots , m_{\ell}$.
We can assume that $m_1 \le \cdots \le m_\ell$.
The integers $m_1 +1 ,\ldots, m_\ell +1$ are the degrees of the basic $W_S$-invariant polynomials and we have
	$$| W_{S} |= \prod_{i=1}^\ell (m_i+1). $$
In addition, $\sum_{i=1}^{\ell} m_i = |\root^+|$.

\textbullet \; For $\gamma \in \root^+$, the {\em height}
of $\gamma$ is ${\rm ht} ( \gamma) := \sum_{\beta \in S}  \langle \check{\varpi}_{\beta}, {\gamma} \rangle$
where for $\beta \in S$, $\check{\varpi}_{\beta}$
is the fundamental weight of $\check{S}$ corresponding to $\check{\beta}$.
We denote by $\theta_{S}$ the highest root of $S$
and by $\theta_{\check{S}}$ the highest root of $\check{S}$.
%The partition of $|\mathcal{R}^+|$ given by the exponents is
%dual to the one given by the number of positive roots of each height.
%In particular,
One has $m_\ell = {\rm ht}(\theta_{S}) =  {\rm ht}(\theta_{\check{S}})$.

\textbullet \;  We denote by $\rho_{I} := \frac{1}{2} \sum_{\gamma \in \root_{I}^+}  \gamma$
the half sum of positive roots of $I$.
We have $\rho_{S}  =  \sum_{\beta \in S} \varpi_\beta$
and $\langle \rho_I ,\check{\beta} \rangle =1$
for any $\beta \in I$.

\textbullet \; Set $J := S \smallsetminus I$.
	The integers $a_\alpha$, for $\alpha \in J$, are defined by:
	$$
		a_\alpha :=	2 \, \langle \rho_S -\rho_I , \check{\alpha} \rangle
			  =	2 -  2 \langle \rho_I , \check{\alpha} \rangle
			  =     2 - \sum_{\gamma \in \root_I^+}  \langle \gamma , \check{\alpha} \rangle \, .
	$$
%We have $ 2( \rho_S - \rho_I ) = \sum\limits_{\alpha \in S \smallsetminus I} a_\alpha \varpi_\alpha$.

A dominant weight $\mu$ %, that is an element of  $\bigoplus_{\beta \in S} \N \varpi_\beta$,
is called {\em minuscule} if $\langle \mu, \theta_{\check{S}} \rangle =1$.
If $\mu$ is minuscule then there is $\beta \in S$ such
that $\mu = \varpi_{\beta}$, cf. \cite[Chapter VI, \S2, exercise 24]{Bo}.

\begin{lemma}    \label{l:root}

Let $\alpha \in J = S \smallsetminus I$.
Then, $a_\alpha \in \{2,\ldots, m_\ell + 1\}$.
Furthermore, the equality $a_\alpha = m_\ell +1$ holds
if and only if $J=\{\alpha\}$
and $\varpi_\alpha$ is minuscule,
that is if $\alpha$ is one of the simple roots
as described below:

%\smallskip

\begin{tabular}{llll}
&&&\\
${\bf A}_\ell$, $\ell \ge 1$  :&
$
\begin{Dynkin}
	\Dbloc{\Dbullet\Deast\Dtext{t}{\beta_1}}
	\Dbloc{\Dbullet\Dwest\Deast\Dtext{t}{\beta_2}}
	\Dbloc{\Dbullet\Dwest\Deast\Dtext{t}{\beta_3}}
	\Dbloc{\Ddots}
	\Dbloc{\Dbullet\Dwest\Deast\Dtext{t}{\beta_{\ell-2}}}
	\Dbloc{\Dbullet\Dwest\Deast\Dtext{t}{\beta_{\ell-1}}}
	\Dbloc{\Dbullet\Dwest\Dtext{t}{\beta_\ell}}
\end{Dynkin}
$
&  $\alpha \in \{ \beta_1,\ldots,\beta_\ell\}$;\\ % and $a_{\alpha}=\ell +1$;\\
%&&&\\
%&&&\\
${\bf B}_\ell$, $\ell \ge 2$ :&
$
\begin{Dynkin}
	\Dbloc{\Dcirc\Deast\Dtext{t}{\beta_1}}
	\Dbloc{\Dcirc\Dwest\Deast\Dtext{t}{\beta_2}}
	\Dbloc{\Dcirc\Dwest\Deast\Dtext{t}{\beta_3}}
	\Dbloc{\Ddots}
	\Dbloc{\Dcirc\Dwest\Deast\Dtext{t}{\beta_{\ell-2}}}
	\Dbloc{\Dcirc\Dwest\Ddoubleeast\Dtext{t}{\beta_{\ell-1}}}
	\Drightarrow
	\Dbloc{\Dbullet\Ddoublewest\Dtext{t}{\beta_\ell}}
\end{Dynkin}
$
& $\alpha=\beta_\ell$; \\ %and $a_{\alpha}=2 \ell$; \\
%&&&\\
%&&&\\
${\bf C}_\ell$,  $\ell \ge 3$ :&
$
\begin{Dynkin}
	\Dbloc{\Dbullet\Deast\Dtext{t}{\beta_1}}
	\Dbloc{\Dcirc\Dwest\Deast\Dtext{t}{\beta_2}}
	\Dbloc{\Dcirc\Dwest\Deast\Dtext{t}{\beta_3}}
	\Dbloc{\Ddots}
	\Dbloc{\Dcirc\Dwest\Deast\Dtext{t}{\beta_{\ell-2}}}
	\Dbloc{\Dcirc\Dwest\Ddoubleeast\Dtext{t}{\beta_{\ell-1}}}
	\Dleftarrow
	\Dbloc{\Dcirc\Ddoublewest\Dtext{t}{\beta_\ell}}
\end{Dynkin}
$  & $\alpha=\beta_1$; \\  %and $a_{\alpha}=2 \ell$; \\
%&&&\\
%&&&\\
%&&& \\
${\bf D}_\ell$, $\ell \ge 4$ : &
$
\begin{Dynkin}
	\Dspace\Dspace\Dspace\Dspace\Dspace
		\Dbloc{\Dbullet\Dsouthwest\Dtext{t}{\beta_{\ell}}}
	\Dskip
	\Dbloc{\Dbullet\Deast\Dtext{t}{\beta_1}}
	\Dbloc{\Dcirc\Dwest\Deast\Dtext{t}{\beta_2}}
	\Dbloc{\Dcirc\Dwest\Deast\Dtext{t}{\beta_3}}
	\Dbloc{\Ddots}
	\Dbloc{\Dcirc\Dwest\Dnortheast\Dsoutheast\Dtext{t}{\beta_{\ell-2}}}
	\Dskip
	\Dspace\Dspace\Dspace\Dspace\Dspace
		\Dbloc{\Dbullet\Dnorthwest\Dtext{t}{\beta_{\ell-1}}}
	\end{Dynkin}
$
& $\alpha \in \{ \beta_1,\beta_{\ell-1},\beta_\ell \}$;\\ %and $a_{\alpha}=2 \ell -2$;   \\
%&&& \; \\
%&&&\\
%&&&\\
${\bf E}_6$  :&$
\begin{Dynkin}
	\Dbloc{\Dbullet\Deast\Dtext{t}{\beta_1}}
	\Dbloc{\Dcirc\Dwest\Deast\Dtext{t}{\beta_3}}
	\Dbloc{\Dcirc\Dwest\Deast\Dsouth\Dtext{t}{\beta_4}}
	\Dbloc{\Dcirc\Dwest\Deast\Dtext{t}{\beta_5}}
	\Dbloc{\Dbullet\Dwest\Dtext{t}{\beta_6}}
	\Dskip
	\Dspace\Dspace\Dbloc{\Dcirc\Dnorth\Dtext{r}{\beta_2}}
\end{Dynkin}
$
&
$\alpha \in \{\beta_1,\beta_6\}$; \\ %and $a_{\alpha}=12$; \\
%&&&\\
%&&&\\
%&&&\\
${\bf E}_7$ :&
$
\begin{Dynkin}
	\Dbloc{\Dcirc\Deast\Dtext{t}{\beta_1}}
	\Dbloc{\Dcirc\Dwest\Deast\Dtext{t}{\beta_3}}
	\Dbloc{\Dcirc\Dwest\Deast\Dsouth\Dtext{t}{\beta_4}}
	\Dbloc{\Dcirc\Dwest\Deast\Dtext{t}{\beta_5}}
	\Dbloc{\Dcirc\Dwest\Deast\Dtext{t}{\beta_6}}
	\Dbloc{\Dbullet\Dwest\Dtext{t}{\beta_7}}
	\Dskip
	\Dspace\Dspace\Dbloc{\Dcirc\Dnorth\Dtext{r}{\beta_2}}
\end{Dynkin}$
	&  $\alpha=\beta_7$. \\ %and $a_{\alpha}=18$. \\
%&&&\\
%&&&\\
\end{tabular}

\end{lemma}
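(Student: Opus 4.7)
The plan is to first establish the lower bound $a_\alpha \ge 2$, then reduce the upper bound by a monotonicity in $I$ to the case $J=\{\alpha\}$, and finally handle that case together with the equality characterization by invoking the classification of simple Lie algebras.

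For the lower bound, use $a_\alpha = 2 - \sum_{\gamma \in \root_I^+} \langle \gamma, \check\alpha\rangle$. Each $\gamma \in \root_I^+$ is a non-negative combination of simple roots from $I$, and $\langle\beta,\check\alpha\rangle \le 0$ for every $\beta \in I$ (since $\alpha \notin I$), so the sum is $\le 0$ and $a_\alpha \ge 2$. For the monotonicity, I claim $a_\alpha^{I'} \ge a_\alpha^{I}$ whenever $I \subseteq I' \subseteq S \smallsetminus \{\alpha\}$: indeed $\rho_{I'} - \rho_I = \tfrac12 \sum_{\gamma \in \root_{I'}^+ \smallsetminus \root_I^+} \gamma$, and each such $\gamma$ is supported in $S \smallsetminus \{\alpha\}$ so $\langle\gamma,\check\alpha\rangle \le 0$. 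The inequality is strict whenever $I \subsetneq S \smallsetminus \{\alpha\}$: pick $\beta \in (S \smallsetminus \{\alpha\}) \smallsetminus I$ and a shortest path $\beta=\beta_0,\ldots,\beta_{k-1},\alpha$ in the connected diagram $\Gamma_S$; since a sum of simple roots along a connected subdiagram is a positive root, $\gamma := \beta_0 + \cdots + \beta_{k-1}$ lies in $\root_{S \smallsetminus \{\alpha\}}^+ \smallsetminus \root_I^+$, and by minimality only $\beta_{k-1}$ is adjacent to $\alpha$, giving $\langle\gamma,\check\alpha\rangle = \langle\beta_{k-1},\check\alpha\rangle < 0$. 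Thus it suffices to treat $J=\{\alpha\}$, and any equality case automatically forces $J = \{\alpha\}$.

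In the case $J=\{\alpha\}$, $I = S \smallsetminus \{\alpha\}$, the identity $2(\rho_S - \rho_I) = a_\alpha \varpi_\alpha$ paired with the highest coroot $\theta_{\check S}$ yields
\[ a_\alpha \cdot n_\alpha \;=\; 2 m_\ell - \langle 2\rho_I, \theta_{\check S}\rangle, \]
using $\langle 2\rho_S, \theta_{\check S}\rangle = 2\,\mathrm{ht}(\theta_{\check S}) = 2 m_\ell$. Here $n_\alpha := \langle\varpi_\alpha,\theta_{\check S}\rangle$ is the coefficient of $\check\alpha$ in $\theta_{\check S}$, a positive integer, and $n_\alpha = 1$ is precisely the minuscule condition. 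I would then verify case-by-case along the classification that $a_\alpha \le m_\ell + 1$ with equality iff $n_\alpha = 1$: in each Dynkin type $A_\ell, B_\ell, C_\ell, D_\ell, E_6, E_7, E_8, F_4, G_2$, one enumerates the positive roots with $c_\alpha(\gamma) > 0$ and computes $a_\alpha = \sum_{\gamma:c_\alpha(\gamma)>0} \langle\gamma,\check\alpha\rangle$, comparing with $m_\ell + 1 = h$. The list of pairs $(S,\alpha)$ achieving equality is the classical list of minuscule fundamental weights (cf.\,\cite[Chap.\,VI, \S2, exercise~24]{Bo}) and matches the diagrams displayed in the statement.

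The main obstacle is precisely this case-by-case verification in the last step: a uniform proof via the displayed identity would require a sharp lower bound on $\langle 2\rho_I, \theta_{\check S}\rangle$ that I do not see coming from general principles, so my plan relies on explicit calculations in each type. Combined with the monotonicity argument, this yields the full statement: $a_\alpha \le m_\ell + 1$ in all cases, with equality $a_\alpha = m_\ell + 1$ iff $J = \{\alpha\}$ and $\varpi_\alpha$ is minuscule.
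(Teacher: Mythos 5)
Your proposal is correct in outline but follows a genuinely different route in the decisive step. The common part: your lower bound $a_\alpha\ge 2$ and the monotonicity reduction to $J=\{\alpha\}$ via $a_\alpha \le 2-2\langle\rho_{S\smallsetminus\{\alpha\}},\check{\alpha}\rangle$ are exactly the paper's opening moves, and your strictness argument (a minimal path in the connected diagram $\Gamma_S$ producing $\gamma\in\root^+_{S\smallsetminus\{\alpha\}}\smallsetminus\root_I$ with $\langle\gamma,\check{\alpha}\rangle<0$) is a welcome addition, since the paper leaves the ``equality forces $J=\{\alpha\}$'' direction implicit. Where you diverge is the heart of the lemma: for $I=S\smallsetminus\{\alpha\}$ you pair $2(\rho_S-\rho_I)=a_\alpha\varpi_\alpha$ with $\theta_{\check{S}}$ (your identity $a_\alpha n_\alpha=2m_\ell-\langle 2\rho_I,\theta_{\check{S}}\rangle$ is correct) and then propose to finish by running through the classification, computing $a_\alpha$ type by type and comparing with $m_\ell+1$. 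The paper instead argues uniformly: if $\varpi_\alpha$ is not minuscule, the estimate $\langle\varpi_\alpha,\theta_{\check{S}}\rangle\ge 2$ together with $\langle\rho_I,\theta_{\check{S}}\rangle\ge 0$ and $-\langle\rho_I,\check{\alpha}\rangle\ge 0$ gives $a_\alpha<m_\ell+1$ outright; if $\varpi_\alpha$ is minuscule, the exact equality $a_\alpha=m_\ell+1$ follows from a bijection $\check{\gamma}\mapsto\theta_{\check{S}}-\check{\gamma}$ between the positive roots outside $\root_I$ (other than the one with coroot $\theta_{\check{S}}$) and the roots of $\root_I^+$ pairing positively with $\theta_{\check{S}}$, a bijection which preserves $\langle\cdot,\theta_{\check{S}}\rangle$ by a short root-string argument (the only classification input being the absence of minuscule weights in type ${\bf G}_2$). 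So the trade-off is: your plan is conceptually simpler but its decisive inequality/equality rests on a finite case check that you state but do not carry out (it does succeed --- your $a_\alpha$ are the Fano indices of the $G/P_\alpha$, all tabulated, and equality with the Coxeter number $m_\ell+1$ holds precisely in the minuscule cases), whereas the paper's argument is classification-free and self-contained; to turn your proposal into a complete proof you would need to actually perform, or properly cite, that type-by-type computation, including the types ${\bf E}_8$, ${\bf F}_4$, ${\bf G}_2$ with no minuscule node.
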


\begin{proof}

Let $\alpha \in J$.
To begin with, since the coefficients of the Cartan matrix of $S$
are nonpositive outside the diagonal,  one has
$a_\alpha \ge 2$.
Moreover,
$a_\alpha \le 2 -
 2 \langle \rho_{S \smallsetminus \{ \alpha \}} , \check{\alpha} \rangle$.
Hence, we may assume that $J = \{ \alpha \}$,
i.e., $I = S \smallsetminus \{Ê\alpha\}$.
Consider now the two cases
depending on whether $\varpi_\alpha$ is minuscule or not.

\textasteriskcentered \; Assume that $\varpi_\alpha$ is not minuscule, i.e., $ \langle \varpi_\alpha, \theta_{\check{S}} \rangle >1$.
Then we have
\begin{eqnarray*}
m_\ell + 1  =   {\rm ht}(\theta_{\check{S}}) +1
= \sum_{\beta \in S} \langle \varpi_\beta, \theta_{\check{S}} \rangle +1
& = & \langle \varpi_\alpha, \theta_{\check{S}} \rangle
+  \sum_{\beta \in I } \langle \varpi_\beta, \theta_{\check{S}} \rangle +1\\
& > & 2 + \sum_{\beta \in I } \langle \varpi_\beta, \theta_{\check{S}} \rangle
 =  2 +  \langle \rho_I , \theta_{\check{S}} - \langle \varpi_\alpha , \theta_{\check{S}} \rangle \check{\alpha} \rangle .
\end{eqnarray*}
Since $\varpi_\alpha$ is not minuscule, $\langle \varpi_\alpha , \theta_{\check{S}} \rangle \ge 2$.
So,
$\langle \rho_I , - \langle \varpi_\alpha , \theta_{\check{S}} \rangle \check{\alpha} \rangle
\ge -2 \langle \rho_I , \check{\alpha} \rangle$ because $- \langle \rho_I , \check{\alpha} \rangle \ge 0$.
On the other hand,  one has $\langle \rho_I , \theta_{\check{S}} \rangle \ge 0$.
Otherwise there would be $\beta \in I$ such that $\langle \beta , \theta_{\check{S}} \rangle < 0$
which is impossible since $\theta_{\check{S}}$ is the highest root.
In conclusion, we get
$
m_\ell + 1 > 2 - 2  \langle \rho_I , \check{\alpha} \rangle = a_\alpha$
as desired.

\textasteriskcentered \; Assume that $\varpi_\alpha$ is minuscule, i.e., $ \langle \varpi_\alpha, \theta_{\check{S}} \rangle = 1$.
Then,
$a_\alpha = 2 \langle \rho_S - \rho_I , \check{\alpha} \rangle
 =   2 \langle \rho_S - \rho_I , \check{\alpha} +  \sum_{\beta \in I } \langle \varpi_\beta, \theta_{\check{S}} \rangle \check{\beta} \rangle
 = 2 \langle \rho_S - \rho_I , \theta_{\check{S}} \rangle$.
 Hence, we have
\begin{eqnarray*}
a_\alpha
 =  2 \langle \rho_S - \rho_I , \theta_{\check{S}} \rangle = {\rm ht}(\theta_{\check{S}}) + \langle \rho_S  - \rho_I ,\theta_{\check{S}} \rangle - \langle   \rho_I , \theta_{\check{S}} \rangle
=  m_\ell + 1 + \frac{1}{2} (
\sum_{\gamma \in \root^+ \smallsetminus  \root_I \atop\check{\gamma} \not= \theta_{\check{S}} } \langle \gamma   , \theta_{\check{S}} \rangle
- \sum_{\delta \in \root_{I}^+ } \langle   \delta, \theta_{\check{S}} \rangle )
\end{eqnarray*}
since $\langle \gamma   , \theta_{\check{S}} \rangle = 2$ whenever $\check{\gamma} = \theta_{\check{S}}$.
Then, our goal is to show that
$$
 \sum_{\gamma \in \root^+ \smallsetminus  \root_I \atop \check{\gamma} \not= \theta_{\check{S}} } \langle \gamma  , \theta_{\check{S}} \rangle
= \sum_{\delta \in  \root_I^+ } \langle   \delta, \theta_{\check{S}} \rangle .
$$
For any $\gamma \in \root^+$, %such that $\check{\gamma} \not= \theta_{\check{S}}$,
we have $\langle \gamma, \theta_{\check{S}} \rangle \ge 0$ since $\theta_{\check{S}}$
is the highest root.
%(cf. e.g. \cite[Theorem 18.9.2(iv)]{TY}).
Set $\root' := \{Ê\gamma \in \root_{S}^+ \smallsetminus  \root_I \; | \;  \gamma \not= \theta_{\check{S}}
\textrm{ and } \langle \gamma, \theta_{\check{S}} \rangle > 0 \}$
and  $\root'' := \{Ê\delta \in \root_I^+ \; | \;  \langle \delta, \theta_{\check{S}} \rangle > 0 \}$.
Then we have to show the equality:
\begin{eqnarray}  \label{eq:root}
 \sum_{\gamma \in \root' } \langle \gamma  , \theta_{\check{S}} \rangle
= \sum_{\delta \in  \root'' } \langle   \delta, \theta_{\check{S}} \rangle .
\end{eqnarray}

Let $\gamma \in \root'$.
Since $\langle \gamma, \theta_{\check{S}} \rangle > 0$,
$\check{\delta} = \theta_{\check{S}} - \check{\gamma} $ is a root
of $\check{S}$
and $\theta_{\check{S}} - \check{\delta}$ is a root too.
In particular, $\langle \delta, \theta_{\check{S}} \rangle > 0$.
Next, show that $\delta \in \root_I^+$.

Since $\gamma \not \in \root_I$, $\check{\gamma} \not \in \root_{\check{I}}$.
Moreover, since $\varpi_\alpha$
is minuscule, $\langle \varpi_\alpha, \check{\gamma} \rangle = \langle \varpi_\alpha, \theta_{\check{S}} \rangle =1$.
So, $\check{\delta} = \theta_{\check{S}} - \check{\gamma} \in \root_{\check{I}}^+$
and then $\delta \in \root_I^+$.
Conversely, if $\delta \in \root''$, then $\check{\gamma} = \theta_{\check{S}} -\check{\delta}$
is a root and so $\langle \gamma, \theta_{\check{S}} \rangle > 0$.
Moreover, $\gamma$ is clearly an element of $\root_{S}^+ \smallsetminus  \root_I^+$
which is different from $\theta_{\check{S}}$,
that is $\gamma \in \root'$.
Therefore, the map from $\root'$ to $\root''$
sending $\gamma$ to $\delta$, where $\check{\delta} = \theta_{\check{S}} -\check{\gamma}$,
gives a bijection between the sets $\root'$ and $\root''$.
So, in order to prove the equality (\ref{eq:root}),
it remains to show that for any $\gamma \in \root'$,
we have $\langle \gamma, \theta_{\check{S}} \rangle = \langle \delta, \theta_{\check{S}} \rangle$
where $\check{\delta} = \theta_{\check{S}} -\check{\gamma}$.

Let $\gamma \in \root'$ and set $p := \langle \gamma, \theta_{\check{S}} \rangle > 0$.
%Since $\theta_{\check{S}}$ is the highest root of $\check{S}$,
Then the $\check{\gamma}$-string through
$\theta_{\check{S}}$ is $\{ \theta_{\check{S}}, \ldots, \theta_{\check{S}} - p\check{\gamma} \}$.
%So, $\theta_{\check{S}} - p\check{\gamma} = \check{\delta} - (p-1)\check{\gamma}$ is a root and $\theta_{\check{S}} - (p+1)\check{\gamma}
%=\check{\delta} - p \check{\gamma}$
%is not.
Since there is no minuscule weight in type {\bf G}$_{2}$,
we have $p \in \{Ê1,2\}$.
If $p =1$, then $\theta_{\check{S}}$ and $\theta_{\check{S}} - \check{\gamma} = \check{\delta}$
are roots but not $\theta_{\check{S}} - 2\check{\gamma} = \check{\delta} - \check{\gamma} = - (\theta_{\check{S}} - 2\check{\delta})$.
So, the $\check{\delta}$-string through
$\theta_{\check{S}}$ is $\{Ê\theta_{\check{S}}, \theta_{\check{S}} - \check{\delta} \}$
and $\langle \delta, \theta_{\check{S}} \rangle = 1$.
If $p=2$, then $\theta_{\check{S}}$, $\theta_{\check{S}} - \check{\gamma} = \check{\delta}$
and $\theta_{\check{S}} - 2\check{\gamma} = \check{\delta} - \check{\gamma} = - (\theta_{\check{S}} - 2\check{\delta})$
are roots.
So $\langle \delta, \theta_{\check{S}} \rangle \ge 2$ and then $\langle \delta, \theta_{\check{S}} \rangle = 2$.
Hence, in both cases, we have obtained
that $\langle \delta, \theta_{\check{S}} \rangle = p = \langle \gamma, \theta_{\check{S}} \rangle$
and the equality (\ref{eq:root}) is proven.

In conclusion, if $\varpi_\alpha$ is minuscule,
we have showed that $a_\alpha = m_\ell +1$.

\end{proof}

\begin{lemma}   \label{l:ht}

Let $S'$ be a subset of $S$
such that $\Gamma_{S'}$ is connected
and denote by $m'_1  \le \cdots \le m'_{l}$ the exponents of $S'$.
Then we have $m'_j \le m_j$ for any $j \in \{1,\ldots,l\}$.
In particular, ${\rm ht}( \theta_{S'} )  \le m_l$.

\end{lemma}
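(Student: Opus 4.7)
My plan is to prove the lemma in three stages. First, I would reduce to the case where $\Gamma_S$ is connected: writing $S = \bigsqcup_k S_k$ as a disjoint union of its connected components, the hypothesis that $\Gamma_{S'}$ is connected forces $S' \subseteq S_{k_0}$ for some $k_0$, and the multiset of exponents of $S$ is the disjoint union of the exponents of the $S_k$ (since the Weyl group of $S$ is the direct product of the Weyl groups of its components). Thus, when arranged in increasing order, the $j$-th exponent of the single component $S_{k_0}$ is bounded above by the $j$-th exponent of $S$, so proving the inequality with $S$ replaced by $S_{k_0}$ suffices.

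Second, the ``In particular'' clause becomes immediate once $\Gamma_S$ is connected. Any positive root $\gamma \in \root_{S'}^+$ is a nonnegative integer combination of simple roots lying in $S' \subseteq S$, hence a positive root of $S$, and its height as the sum of the coefficients on simple roots does not depend on whether one views $\gamma$ inside $\root_{S'}$ or $\root_S$. Consequently ${\rm ht}(\theta_{S'}) \le \max\{{\rm ht}(\gamma) : \gamma \in \root_S^+\} = {\rm ht}(\theta_S) = m_\ell$.

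Third, for the positional inequality $m'_j \le m_j$, I would proceed by induction on $|S| - |S'|$. The base case $|S| = |S'|$ forces $S = S'$ and the claim is trivial. For the inductive step, choose any $\beta \in S \smallsetminus S'$, put $\bar S := S \smallsetminus \{\beta\}$, and note that $S' \subseteq \bar S$ with $|\bar S| - |S'| = |S| - |S'| - 1$. By the inductive hypothesis the exponents of $S'$ are positionally bounded by those of $\bar S$, so it suffices to verify that removing a single vertex from a Dynkin diagram can only decrease each exponent positionally. This in turn reduces to the finite claim: for a connected Dynkin diagram $S_0$ and a vertex $\beta \in S_0$, the exponents of $S_0 \smallsetminus \{\beta\}$ (which is again a Dynkin diagram, possibly disconnected) are positionally bounded by those of $S_0$. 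This claim is verified case by case using the classification of irreducible Dynkin diagrams and the explicit lists of exponents: $1, 2, \ldots, \ell$ for $A_\ell$; $1, 3, \ldots, 2\ell - 1$ for $B_\ell$ and $C_\ell$; $1, 3, \ldots, 2\ell - 3$ together with $\ell - 1$ for $D_\ell$; and the finite tables for the exceptional types $E_6, E_7, E_8, F_4, G_2$.

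The main obstacle I anticipate is the tedious nature of this case analysis, particularly for $D_\ell$ (where the extra exponent $\ell - 1$ must be carefully placed when sorting) and for the exceptional types. A tempting unified approach via Kostant's theorem, which gives $p_i := \#\{\gamma \in \root_S^+ : {\rm ht}(\gamma) = i\} = \#\{k : m_k \ge i\}$, yields directly the inequality $p'_i \le p_i$ since positive roots of $S'$ are positive roots of $S$ of the same height. Unfortunately this translates only to $\#\{k : m'_k \ge i\} \le \#\{k : m_k \ge i\}$, which falls short of the positional inequality whenever $l < \ell$, so the classification appears unavoidable for the strong positional statement.
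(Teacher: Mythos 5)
Your plan founders on a point you use twice without checking it: bottom-aligned positional domination of exponent multisets behaves badly for \emph{disconnected} diagrams, and both your stage 1 and your stage 3 pass through disconnected diagrams. Concretely, take $S'={\bf B}_2$ inside $T={\bf A}_1\sqcup{\bf B}_2$: the exponents of $S'$ are $1,3$, the sorted exponents of $T$ are $1,1,3$, and at the second position $3>1$. So the statement of the lemma is simply false when the ambient diagram is allowed to be disconnected. This already invalidates your stage 1 (your claim that the $j$-th exponent of the component $S_{k_0}$ is bounded by the $j$-th exponent of $S$ has the inequality backwards, since the extra components insert \emph{small} exponents; fortunately the paper's standing assumption is that $\Gamma_S$ is connected, so that stage is unnecessary). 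More seriously, it breaks the inductive step of stage 3: if you ``choose any $\beta\in S\smallsetminus S'$'', then $\bar S=S\smallsetminus\{\beta\}$ may be disconnected and the inductive hypothesis you invoke for $S'\subseteq\bar S$ is an instance of the false statement above (e.g. $S={\bf B}_4$, $S'$ the terminal ${\bf B}_2$, $\beta=\beta_2$ gives $\bar S={\bf A}_1\sqcup{\bf B}_2$). Likewise ``removing a single vertex can only decrease each exponent positionally'' fails for disconnected diagrams, which occur deeper in the recursion: deleting an end vertex of the ${\bf A}_2$-component of ${\bf A}_2\sqcup{\bf B}_2$ changes the sorted exponents from $1,1,2,3$ to $1,1,3$, and $3>2$ in the third position. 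The repair is easy but must be stated: at each step delete a \emph{leaf} of the tree $\Gamma_S$ lying outside $S'$ (such a leaf exists, because a proper connected subdiagram of a tree cannot contain all of its leaves), so every intermediate diagram stays connected and the finite check only concerns a connected diagram versus the same diagram minus a leaf. As written, though, the induction does not go through. A smaller point: the ``in particular'' clause asserts ${\rm ht}(\theta_{S'})\le m_l$ with $l=|S'|$ (this stronger form is exactly what is used in the proof of Lemma \ref{l:eq}); your stage 2 only proves the weaker bound ${\rm ht}(\theta_{S'})\le m_\ell$, although the intended statement would follow from your positional inequality at $j=l$ together with $m'_l={\rm ht}(\theta_{S'})$.

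For comparison, the paper's own proof is a one-line appeal to Kostant's theorem that the exponents form the partition dual to the height distribution of the positive roots. Your closing remark about that route is pertinent: the inclusion $\mathcal{R}^+_{S'}\subseteq\mathcal{R}^+_S$ with heights preserved gives only $p'_i\le p_i$, hence domination of the exponents aligned from the top, and upgrading this to the bottom-aligned inequality $m'_j\le m_j$ requires a further input (for instance that at every height realized in $\mathcal{R}^+_{S'}$ the complement $\mathcal{R}^+_S\smallsetminus\mathcal{R}^+_{S'}$ contains at least $\ell-l$ roots, which itself needs a connectivity argument or a classification check). So your instinct to supply a more explicit argument is reasonable, and a corrected, connectivity-preserving version of your induction would be a legitimate alternative to the paper's terse proof; the version you wrote is not yet one.
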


\begin{proof}

By a classical result, \cite{Ko59}, the partition of $|\mathcal{R}^+|$ formed by the exponents is
the dual to that formed by the number of positive roots of each height.
This easily implies the statement.

\end{proof}

Let $k$ be the cardinality of $I$,
and $m_1', \ldots , m_k ' $ the union
of all the exponents of subsets $S'$ such
that $\Gamma_{S'}$ is a connected component of $\Gamma_I$.
Order them so that
$m_1' \le \cdots \le m_k '.$
Number the roots $\alpha_{k+1},\ldots,\alpha_\ell$ of $J$
so that
$a_{\alpha_{k+1}} \le \cdots \le a_{\alpha_\ell}$
and set for simplicity $a_j : = a_{\alpha_j}$ for any $j \in \{k+1,\ldots,\ell\}$.

\begin{lemma}  \label{l:eq}

{\rm (i)} For all $i \in \{ 1,\ldots, k\}$, one has $m_i ' \le m_i$
and, for all $j \in \{ k+1,\ldots, \ell \}$, one has $a_{j} \le m_j +1$.
In particular,
$$
	| W_I | \, a_{k+1} \cdots  a_\ell
		\le  | W_{S}|.
$$

{\rm (ii)} Equality
holds in the above inequality if and only if $I$ and $J$
are in one of the configurations {\rm (C1)}, {\rm (C2)}
or {\rm (C3)} as described in Proposition \ref{p:PP}
with $\F=J$.

\end{lemma}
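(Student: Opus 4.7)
The plan is to derive the product inequality in (i) from two factor-by-factor inequalities, $m'_i\le m_i$ for $i\in\{1,\dots,k\}$ and $a_{\alpha_j}\le m_j+1$ for $j\in\{k+1,\dots,\ell\}$, whose product gives $|W_I|\prod_{j=k+1}^{\ell}a_{\alpha_j}\le|W_S|$; for (ii), equality in the product then forces each factor inequality to be an equality, and this is analyzed via the minuscule classification in Lemma~\ref{l:root}.

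For the first inequality, I would decompose $\Gamma_I$ into connected components $I_1,\dots,I_p$, denoting the exponent sequence of $I_t$ by $m^{(t)}_1\le\cdots\le m^{(t)}_{|I_t|}$. Lemma~\ref{l:ht} applied to each connected subdiagram $I_t\subseteq\Gamma_S$ yields $m^{(t)}_j\le m_j$ for all admissible $t,j$. To transfer this to the merged sorted list $(m'_1\le\cdots\le m'_k)$, I argue by contradiction: if $m'_i>m_i$ for some $i$, then at least $k-i+1$ merged exponents exceed $m_i$; on the other hand, each component contributes at most $\max(0,|I_t|-i)$ exponents exceeding $m_i$, since $m^{(t)}_j>m_i$ combined with $m^{(t)}_j\le m_j$ forces $j>i$. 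Summing gives the total $\sum_t\max(0,|I_t|-i)=k-\sum_t\min(|I_t|,i)\le k-i$, a contradiction.

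For the second inequality, for each $\alpha\in J$ let $C_\alpha$ denote the connected component of $\alpha$ in $\Gamma_{I\cup\{\alpha\}}$. The crucial observation is that $a_\alpha$ depends only on $C_\alpha$: any simple root of $I$ lying in another connected component of $\Gamma_I$ is non-adjacent to $\alpha$ in $\Gamma_S$, hence orthogonal to $\check\alpha$, so the corresponding positive roots of $R_I^+$ contribute nothing to $\sum_{\gamma\in R_I^+}\langle\gamma,\check\alpha\rangle$. Thus $a_\alpha$ coincides with the analogous quantity computed inside the connected subsystem $C_\alpha$ with ``$I$''$=C_\alpha\cap I$, and Lemma~\ref{l:root} applied to $C_\alpha$ gives $a_\alpha\le m(C_\alpha)_{|C_\alpha|}+1$, where $m(C_\alpha)_{|C_\alpha|}$ is the largest exponent of $C_\alpha$. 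Lemma~\ref{l:ht} then bounds $m(C_\alpha)_{|C_\alpha|}\le m_{|C_\alpha|}$. Since $C_\alpha\subseteq I\cup\{\alpha\}$ has at most $k+1$ elements and $k+1\le j$ whenever $j\ge k+1$, we conclude $a_{\alpha_j}\le m_{k+1}+1\le m_j+1$. Multiplying all factor inequalities yields $|W_I|\prod_{j=k+1}^{\ell}a_{\alpha_j}\le\prod_{i=1}^{\ell}(m_i+1)=|W_S|$.

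For (ii), equality in the product forces both $m'_i=m_i$ for all $i\le k$ and $a_{\alpha_j}=m_j+1$ for all $j>k$. In particular $a_{\alpha_\ell}=m_\ell+1$, so the equality statement of Lemma~\ref{l:root} forces either $J=\varnothing$ (giving $I=S$, configuration (C3)) or $|J|=1$ with $\alpha=\alpha_\ell$ minuscule. In the latter subcase, a type-by-type inspection over the minuscule list $A_\ell,B_\ell,C_\ell,D_\ell,E_6,E_7$ of whether removing $\alpha$ from $S$ produces a subsystem whose exponents exactly coincide with $(m_1,\dots,m_{\ell-1})$ singles out only $A_\ell$ with $\alpha$ extremal (configuration (C1)) and $C_\ell$ with $\alpha=\beta_1$ the simple extremal vertex (configuration (C2)); for $B_\ell,D_\ell,E_6,E_7$ one checks directly that the removed subsystem's exponents never agree with the first $\ell-1$ exponents of $S$. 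The main obstacle I anticipate is the merging step for the first inequality: Lemma~\ref{l:ht} is stated only for connected subdiagrams, so extending it to a possibly disconnected $\Gamma_I$ requires the pigeonhole count described above; the equality case, though requiring case-by-case verification across minuscule types, proceeds routinely once the classification is in hand.
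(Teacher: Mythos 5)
Your proposal is correct and follows essentially the same route as the paper: factor-by-factor bounds ($m_i'\le m_i$ via Lemma \ref{l:ht}, and $a_{\alpha_j}\le m_j+1$ by restricting the computation of $a_{\alpha_j}$ to the connected component of $\alpha_j$ in an augmented subdiagram and applying Lemma \ref{l:root} together with Lemma \ref{l:ht}), then forcing equality in every factor, using the minuscule/equality case of Lemma \ref{l:root} to get $\F=J$ of size at most one, and finishing with the same type-by-type comparison of exponents. The only differences are cosmetic: you work with $I\cup\{\alpha_j\}$ where the paper uses $I\cup\{\alpha_{k+1},\dots,\alpha_j\}$, and you spell out the pigeonhole argument merging exponents over the connected components of $\Gamma_I$, a step the paper treats as immediate from Lemma \ref{l:ht}.
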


\begin{proof}

(i) By Lemma \ref{l:ht},
for all $i \in \{ 1,\ldots, k\}$, we have $m_i ' \le m_i$.
Turn to the second statement.
Set for $j \in \{k+1, \ldots, \ell \}$,
$I_{j} := I \cup \{ \alpha_{k+1} , \ldots, \alpha_j\}$.
Let $j \in \{k+1, \ldots, \ell \}$
and $S_j$ the connected component of $I_j$ containing $\alpha_j$.
We have
$a_j = 2 -\langle \rho_{I} ,\check{\alpha} \rangle
= 2 -\langle \rho_{I \cap S_j} ,\check{\alpha} \rangle
= 2\langle \rho_{S_j} - \rho_{I \cap S_j} ,\check{\alpha} \rangle$.
So, by Lemma \ref{l:root},
$a_j \le {\rm ht}(\theta_{S_j} ) + 1$.
Hence, by Lemma \ref{l:ht}, $a_j \le m_j +1$ since $I_j$ has cardinality $j$.
All this shows:
$$Ê| W_I |   \, \prod_{j=k+1}^{\ell} a_{j}  = \prod_{i=1}^{k}( m_i '  +1)  \prod_{j=k+1}^{\ell} a_{j}
\le \prod_{i=1}^{\ell}( m_i  +1) = |W_S |.
$$

\medskip

(ii) By the proof of (i),
if equality holds in the above inequality then $|W_I | = \prod_{i=1}^{k}( m_i   +1) $
and for all $j \in \{ k+1,\ldots, \ell \}$, $a_{j} = m_j +1.$
In particular, $a_\ell = m_\ell +1$.
Therefore, we are in one of the situations
of the Lemma \ref{l:root}
and we consider the six cases as described in it.

\textbullet \; Type ${\bf A}_\ell$, $\ell \ge 1$: The $\ell-1$ smallest degrees of the basic invariants are
$2,3\ldots, \ell$.
If $\alpha_\ell$ is not an extremal vertex,
then $| W_{S \smallsetminus \{\alpha_\ell\}} | < \ell \, !$ as we easily verify.
So $\alpha_\ell$ must be extremal and $I$ and $J$ are in the configuration (C1).

\textbullet \; Type ${\bf B}_\ell$,  $\ell \ge 2$:
The $\ell-1$ smallest degrees of the basic invariants are
$2,4,\ldots, 2 (\ell-1)$.
So their product is strictly greater than $|W_{S \smallsetminus \{\beta_\ell\}} | = \ell ! $
and the equality does not hold.

\textbullet \; Type ${\bf C}_\ell$, $\ell \ge 3$: $I$ and $J$ are in the configuration (C2).

\textbullet \; Type ${\bf D}_\ell$, $\ell \ge 4$:
The degrees of the basic invariants of ${\bf D}_{\ell}$, for $\ell \ge 4$, are
$2,4,\ldots, 2 \ell -2, \ell$.
So, the $\ell-1$ smallest are $2,4,\ldots, 2 \ell - 4, \ell$ ($\ell \ge 4$)
and their product is $2^{\ell - 2} \ell$.
But for any $i \in \{1,\ldots,\ell\}$,
$| W_{S \smallsetminus \{\beta_i\}} | \le | W_{S \smallsetminus \{\beta_1\}} | =2^{\ell -2} (\ell-1) ! <  2^{\ell - 2} \ell$;
so the equality does not hold.

\textbullet \; Type ${\bf E}_6$:
The 5-th smallest exponents of ${\bf E}_{6}$ are
$1,4,5,7,8$ and those of $S \smallsetminus \{ \beta_1\}$
(or of $S \smallsetminus \{ \beta_6\}$)
are $1,3,4,5,7$; so, the equality does not hold.

\textbullet \; Type ${\bf E}_7$:
The 6-th smallest exponents of ${\bf E}_{7}$ are
$1,5,7,9,11,13$ and those of $S \smallsetminus \{ \beta_7 \}$
are $1,4,5,7,8,11$; so, the equality does not hold.

\smallskip

One has proven one implication.
The converse implication is an easy computation,
left to the reader.

\end{proof}

\begin{prop}  \label{p:Eul}

Assume that $X$ is a simple locally factorial $G/H$-embedding with maximal colored cone $(\sigma,\F)$
of dimension $r$.
Let $I$ be the subset of $S$ such that $N_G(H) = P_I$.
Then,

$$ e_{\rm st} (X) = \displaystyle{\frac{ | W_{S} | }{  | W_{I} | \,
		\prod_{ \alpha \in \F} Êa_\alpha }} \quad
		\textrm{ and } \quad
	e(X) = \displaystyle{\frac{ | W_{S} | }{ | W_{I \cup \F} |}}  \, .
$$

\end{prop}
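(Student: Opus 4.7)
The plan is to prove the two formulas by completely different means: the formula for $e(X)$ comes from the orbit decomposition of $X$, whereas the formula for $e_{\rm st}(X)$ comes from specializing the stringy motivic formula of Theorem \ref{t:main} at $u=v=1$.

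For the usual Euler number $e(X)$, I would use that $E(\cdot; u,v)$ factors through $\mathrm{K}_0(\V)$, so that $e(X) = \sum_{(\sigma',\F')} e(Z_{\sigma',\F'})$ where $(\sigma',\F')$ runs over the faces of the maximal colored cone $(\sigma,\F)$. By Proposition \ref{p:orb}, each $G$-orbit $Z_{\sigma',\F'}$ is isomorphic to $G/H_{\sigma',\F'}$ and admits a natural surjective map onto $G/P_{I \cup \F'}$ whose fiber is the algebraic torus $P_{I \cup \F'}/H_{\sigma',\F'}$ of dimension ${\rm rk}\,M_{\sigma'}$. Since Euler numbers are multiplicative on Zariski-locally trivial fibrations, $e(Z_{\sigma',\F'}) = e(G/P_{I \cup \F'}) \cdot e((\C^{*})^{{\rm rk}\,M_{\sigma'}})$, which vanishes whenever ${\rm rk}\,M_{\sigma'} > 0$. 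As $\sigma$ has maximal dimension $r$, only the closed orbit $Z_{\sigma,\F}$ (for which $M_\sigma = 0$) contributes, yielding $e(X) = e(G/P_{I \cup \F}) = |W_S|/|W_{I \cup \F}|$ by the Bruhat decomposition.

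For $e_{\rm st}(X)$, I would apply Theorem \ref{t:main}. By Theorem \ref{t:lf}, the locally factorial assumption says $\sigma$ is generated by part of a $\Z$-basis of $N$, which we may complete to a basis $\{e_1,\ldots,e_r\}$; by (L2) we may arrange that $e_{k+1},\ldots,e_r$ are the generators $\varrho_\alpha$ ($\alpha \in \F$) and $e_1,\ldots,e_k$ are primitive generators of the uncolored rays. By properties (P1), (P2), (P3) of $\omega_X$ one has $\omega_X\bigl(\sum c_i e_i\bigr) = -\sum c_i a_i$ where $a_i = 1$ for $i \leq k$ and $a_i = a_\alpha$ for $i > k$, so the sum factorizes:
$$
\sum_{n \in \sigma \cap N} \L^{\omega_X(n)} \;=\; \prod_{i=1}^{r}\frac{1}{1-\L^{-a_i}}.
$$
Combined with $[G/H] = [G/P_I]\cdot(\L-1)^r$ (from the trivial $B$-bundle description of $\widetilde{U}_0$), Theorem \ref{t:main} gives
$$
\E_{\rm st}(X) \;=\; [G/P_I]\cdot \prod_{i=1}^r \frac{(\L-1)\,\L^{a_i}}{\L^{a_i}-1}.
$$

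It then suffices to apply the ring homomorphism $E(-;u,v)$ and take the limit $u=v=1$. Using $\L^{a_i}-1 = (\L-1)(1+\L+\cdots+\L^{a_i-1})$, each factor evaluates to $1/a_i$ at $\L=1$, while $E([G/P_I];1,1) = |W_S|/|W_I|$. Since $a_i=1$ contributes trivially, only the colored generators remain and we obtain
$$
e_{\rm st}(X) \;=\; \frac{|W_S|}{|W_I|}\,\prod_{\alpha \in \F}\frac{1}{a_\alpha}.
$$
The main obstacle is bookkeeping rather than substance: one has to combine the locally factorial description of $\sigma$ from Theorem \ref{t:lf} with the correct identification of $-\omega_X(e_i)$ at each ray, and check that the computation of the limit from $E_{\rm st}(X;u,v)$ matches the definition of $e_{\rm st}(X)$ (which is where Proposition \ref{p:div} guarantees that $X$ has at worst log-terminal singularities, so $\E_{\rm st}(X)$ is well-defined).
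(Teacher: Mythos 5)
Your proof is correct and follows essentially the same route as the paper: the usual Euler number is computed from the $G$-orbit decomposition, with all positive-rank orbits contributing zero so that only the closed orbit $G/P_{I\cup\F}$ survives, while the stringy Euler number is obtained from Theorem \ref{t:main} by using the $\Z$-basis structure of $\sigma$ from Theorem \ref{t:lf}, the identity $[G/H]=[G/P_I](\L-1)^r$, and evaluation of the resulting product at $uv=1$. The only differences are cosmetic (you spell out the fibration argument via Proposition \ref{p:orb} and flag the log-terminal/limit bookkeeping explicitly), so no further comparison is needed.
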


\begin{proof}

First of all, observe that the Euler number of $G/B$
is the number of fixed points of a maximal torus, i.e.,
the order of the Weyl group $W_S$.
More generally, for any $S' \subset S$, the Euler number of
$G/P_{S'}$ is $|W_S| / |W_{S'}|$.
Thus, we have to show:
$$ e_{\rm st} (X) = \displaystyle{\frac{ e(G/P_I )}{
		\prod_{ \alpha \in \F} Êa_\alpha }}   \quad
\textrm{ and } \quad
e(X) = e(G/ P_{I \cup \F} )
\, .
$$
Now, we observe that the usual Euler number of a horospherical homogeneous space
is nonzero if and only if it has rank zero.
As a consequence, one has $e(X) = e(G /P_{I \cup \F})$,
according to the description of $G$-orbits in $X$ (see Proposition \ref{p:orb}).

Turn to the formula for $e_{\rm st}(X)$.
Let $e_1,\ldots,e_r$ be the primitive generators of $\sigma$.
Since $X$ is locally factorial, $e_1,\ldots,e_r$ is
a $\Z$-basis of $\sigma \cap N$ (cf. Theorem \ref{t:lf}).
Then
$$
	\sum\limits_{e_i \in \sigma \cap N} \L^{\omega_X (e_i)}
	 =  \prod_{i=1}^r \displaystyle{\frac{1}{1 - \L^{\omega_X (e_i)}}}
	= \displaystyle{\frac{1}{ (\L -1)^r}}
		\prod_{i=1}^r \displaystyle{\frac{\L^{ - \omega_X(e_i)} }{\L^{-\omega_X (e_i) -1 } + \cdots +1}} .
$$
Then, by Theorem\,\ref{t:main}, one has:
\begin{eqnarray*}
	\E_{\rm st}(X ) \ = \ [G/H] \sum\limits_{e_i \in \sigma \cap N} \L^{\omega_X (e_i)}
		& = & [ G/P] \, [T] \ \displaystyle{\frac{1}{ (\L -1)^r}}
		\prod_{i=1}^r \displaystyle{\frac{\L^{ - \omega_X(e_i)} }{\L^{-\omega_X (e_i) -1 } + \cdots +1}} \\
		& = &  [ G/P ]
		\prod_{i=1}^r \displaystyle{\frac{\L^{ - \omega_X(e_i)} }{\L^{-\omega_X (e_i) -1 } + \cdots +1}} \, .
\end{eqnarray*}
From this, we get
$$
	e_{\rm st} (X) = e(G/P) \prod_{i=1}^r \displaystyle{\frac{1}{\big( -\omega_X(e_i) \big)}}
	= \displaystyle{\frac{e(G/P_I)}{\prod_{\alpha \in \F} a_\alpha}} \, .
$$
The last equality holds because the set of elements $\varrho_\alpha$
($\alpha \in \F$)
is a subset of the basis $\{e_1,\ldots,e_r\}$ (cf. Theorem \ref{t:lf}).

\end{proof}

We are in a position to prove Theorem\,\ref{t:smo}.

\begin{proof}[Proof of Theorem\,\ref{t:smo}]

We can assume without loss of generality that $S$ is connected
and $I \cup \F = S$.
By Lemma \ref{l:eq} and Proposition \ref{p:Eul},
we have $e_{\rm st}(X) \ge e(X) $.
This proves one part of the theorem.
Moreover, the equality holds if and only if $(I,\F)$ is
in one of the configurations (C1), (C2)
or (C3) as described in Proposition \ref{p:PP},
that is to say if and only if $X$ is smooth by Proposition \ref{p:PP}.

\end{proof}

\begin{rem}

As a matter of fact,
we gave another proof for the first implication
of Pauer's criterion (Proposition \ref{p:PP}).
Indeed, whenever $(I,\F)$ is not in one of the configurations (C1), (C2)  or (C3)
of Proposition \ref{p:PP}, we have shown that $e_{\rm st}(X) > e(X)$,
and so $X$ is not smooth.

\end{rem}

%%%%%%%%%%%%%%%%%%%%%%%%%%%%%%
%%%%%%%%%%%%%%%%%%%%%%%%%%%%%%
%%%%
%%%%
\section{Some applications and open questions}                   \label{S:App}
%%%%
%%%%
%%%%%%%%%%%%%%%%%%%%%%%%%%%%%%
%%%%%%%%%%%%%%%%%%%%%%%%%%%%%%

Let $X$ be a complete locally factorial horospherical $G/H$-embedding
with colored fan $\Sigma$.
Let $e_1,\ldots, e_s$ be the primitive integral
generators of all $1$-dimensional %oK?
cones in $\Sigma$
and set $a_i : = -\omega_X (e_i)$ for all $i \in \{Ê1,\ldots, s\}$.

Consider the polynomial
ring $\C[z_1,\ldots,z_s]$ whose
variables $z_1,\ldots,z_s$ are in
bijection with the lattice vectors $e_1,\ldots,e_s$.
Recall that the Stanley-Reisner ring $R_\Sigma$ is
the quotient of $\C[z_1,\ldots,z_s]$
by the ideal generated by all square free monomials
$z_{i_1} \ldots z_{i_k}$
such that the lattice vectors
$e_{i_1} \ldots e_{i_k}$ do not generate any
$k$-dimensional cone in $\Sigma$.
Recall also that the \emph{weighted Stanley-Reisner ring}
$R_\Sigma^w$ is  defined
by putting $\deg z_i = a_i$
in the standard Stanley-Reisner ring $R_\Sigma$.

\begin{prop}       \label{p:SR}

Let $X$ be a complete locally factorial horospherical $G/H$-embedding
with colored fan $\Sigma$.
Then, one has:
\begin{eqnarray}   \label{eq:P}
	&& \sum\limits_{n\in N} (uv)^{\omega_X(n)}
	= P(R_\Sigma^w , (uv)^{-1})
	=  \sum\limits_{\sigma \in \Sigma}
		\displaystyle{ \frac{(-1)^{\dim \sigma}}{ \prod_{e_i \in \sigma} \big(1 - (uv)^{a_i} \big)} }  \, ;
			\\ \label{eq:ai}
	&& E_{\rm st} (X ; u,v ) = E(G/H ; u,v) (-1)^r P(R_\Sigma^w ,uv) \, ,
\end{eqnarray}
where $P(R_\Sigma^w, t)$
denotes the Poincar\'e series of the weighted Stanley-Reisner ring $R_\Sigma^w$.

\end{prop}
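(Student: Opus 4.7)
The plan is to establish (\ref{eq:P}) by two direct combinatorial computations and then deduce (\ref{eq:ai}) from Theorem~\ref{t:main} via a Dehn--Sommerville type reciprocity for complete simplicial fans. Since $X$ is locally factorial and complete, Theorem~\ref{t:lf} together with completeness implies that, as an uncolored fan, $\Sigma$ is a complete simplicial fan: every cone $\sigma$ is generated by a subset $\{e_i\in\sigma\}$ of a $\Z$-basis of $N$, and $|\Sigma|=N_\R$. In particular, each $n\in N$ lies in the relative interior of a unique cone.

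First I would prove both equalities of (\ref{eq:P}) separately and match them. For the Poincar\'e series of $R_\Sigma^w$, a monomial basis indexed by pairs $(\sigma,(b_i)_{i\in\sigma})$ with $\sigma\in\Sigma$ and $b_i\ge 1$ gives
\[
P(R_\Sigma^w,t) \;=\; \sum_{\sigma\in\Sigma}\prod_{e_i\in\sigma}\frac{t^{a_i}}{1-t^{a_i}}.
\]
Substituting $t=(uv)^{-1}$ and simplifying $\frac{(uv)^{-a_i}}{1-(uv)^{-a_i}}=\frac{-1}{1-(uv)^{a_i}}$ produces the rightmost member of (\ref{eq:P}). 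For the leftmost expression, I would stratify $N$ by the unique cone containing $n$ in its relative interior; writing $n=\sum_{i\in\sigma}b_ie_i$ with $b_i\ge 1$ (valid because the $e_i$ for $i\in\sigma$ extend to a $\Z$-basis of $N$), the linearity of $\omega_X$ on $\sigma$ together with $\omega_X(e_i)=-a_i$ gives $\omega_X(n)=-\sum_i b_ia_i$, and summing the resulting geometric series cone by cone yields the same expression. This establishes (\ref{eq:P}).

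For (\ref{eq:ai}), Theorem~\ref{t:main} (composed with the ring map $E$) combined with (\ref{eq:P}) gives $E_{\rm st}(X;u,v) = E(G/H;u,v)\,P(R_\Sigma^w,(uv)^{-1})$, so it suffices to establish the reciprocity $P(R_\Sigma^w,t^{-1}) = (-1)^r P(R_\Sigma^w,t)$. I would prove this by expanding $\frac{t^{a_i}}{1-t^{a_i}} = -1 + \frac{1}{1-t^{a_i}}$ in each factor of the cone sum and regrouping contributions over faces $\tau\subseteq\sigma$, obtaining
\[
P(R_\Sigma^w,t) \;=\; \sum_{\tau\in\Sigma}\bigl(\sum_{\sigma\supseteq\tau}(-1)^{\dim\sigma-\dim\tau}\bigr)\prod_{e_i\in\tau}\frac{1}{1-t^{a_i}}.
\]
For a complete simplicial fan of dimension $r$, the link of each cone $\tau$ triangulates a sphere of dimension $r-\dim\tau-1$, and a standard Euler-characteristic computation gives $\sum_{\sigma\supseteq\tau}(-1)^{\dim\sigma-\dim\tau}=(-1)^{r-\dim\tau}$. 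Substituting and comparing with the closed form of $P(R_\Sigma^w,t^{-1})$ obtained in the previous paragraph yields the reciprocity.

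The principal obstacle is precisely this last reciprocity: its proof uses the completeness of $\Sigma$ in an essential way, via the sphere property of the links of cones in a complete simplicial fan, so the locally factorial and completeness hypotheses both enter crucially here. Apart from this step, the argument is a bookkeeping exercise using the lattice-point decomposition of a simplicial fan and the monomial description of weighted Stanley--Reisner rings.
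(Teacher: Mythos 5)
Your proof of (\ref{eq:P}) is essentially the paper's: you group monomials of $R_\Sigma^w$ by their support and stratify $N$ by relative interiors of cones, using local factoriality (Theorem \ref{t:lf}) to write each $n$ uniquely as a nonnegative integral combination of the generators of the cone containing it, and completeness to know that $|\Sigma|\cap N=N$; the paper does the same via the bijection between the monomial basis of $R_\Sigma$ and lattice points, followed by the identical cone-by-cone geometric series computation. Where you genuinely diverge is (\ref{eq:ai}). The paper also starts from $E_{\rm st}(X;u,v)=E(G/H;u,v)\,P(R_\Sigma^w,(uv)^{-1})$, but then invokes Poincar\'e duality for stringy $E$-functions \cite[Theorem 3.7]{Ba98}, applied both to $X$ and to the smooth projective variety $G/P$, together with $E(G/H;u,v)=E(G/P;u,v)(uv-1)^r$, to convert $P(R_\Sigma^w,(uv)^{-1})$ into $(-1)^rP(R_\Sigma^w,uv)$. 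You instead prove the underlying rational-function reciprocity $P(R_\Sigma^w,t^{-1})=(-1)^rP(R_\Sigma^w,t)$ purely combinatorially: expanding $\frac{t^{a_i}}{1-t^{a_i}}=\frac{1}{1-t^{a_i}}-1$, regrouping over faces, and using that $\sum_{\sigma\supseteq\tau}(-1)^{\dim\sigma-\dim\tau}=(-1)^{r-\dim\tau}$ for a complete simplicial fan (Euler characteristic of the sphere decomposed by the star of $\tau$); this identity is correct and the bookkeeping checks out, since $\Sigma$ has finitely many cones and every subset of the generators of a cone is a face lying in $\Sigma$. The trade-off: your route is more elementary and self-contained, needing only standard facts about complete simplicial fans and no geometric duality input, and in fact it makes the stringy Poincar\'e duality for these $X$ a consequence of Theorem \ref{t:main}; the paper's route is shorter given Batyrev's duality theorem and highlights that the combinatorial reciprocity is forced by the geometry. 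Both arguments use completeness and local factoriality exactly where you say they are needed.
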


\begin{proof}

The ring $R_\Sigma$ has a monomial basis over $\C$
whose elements are in one-to-one correspondence
with $N$.
Namely,
any monomial $z_{i_1}^{k_1} \ldots z_{i_t}^{k_t}$
in $R_\Sigma$ corresponds to the
lattice point
$k_{1} e_{i_1} + \cdots + k_t e_{i_t}$
and the weighted degree of  $z_{i_1}^{k_1} \ldots z_{i_t}^{k_t}$
is $- k_{1} \omega_X(e_{i_1}) - \cdots  - k_t \omega_X(e_{i_t})$.
Thus, the $k$-homogeneous component of the weighted Stanley-Reisner
ring $R_{\Sigma}^{w}$ consists of all
monomials $z_{i_1}^{k_1} \ldots z_{i_t}^{k_t}$
corresponding
to lattice points $n \in N$ such that
$\omega_X(n) = -k$.
This implies the first equality in (\ref{eq:P}).
For any cone $\sigma \in \Sigma$, we denote by
$\sigma^\circ$ the relative interior of $\sigma$.
Since $X$ is locally factorial, one has by Theorem \ref{t:lf}:
\begin{eqnarray}     \label{eq:P2}
	\sum\limits_{n \in N} t^{\omega_X (n)}
	& =&  \sum\limits_{\sigma \in \Sigma} \sum_{n \in \sigma^\circ} t^{\omega_X (n)}
	= \sum\limits_{\sigma\in \Sigma} \prod_{e_i \in \sigma} \displaystyle{ \frac{t^{-a_i}}{1 - t^{-a_i}}}
	= \sum\limits_{\sigma\in \Sigma} \prod_{e_i \in \sigma} \displaystyle{ \frac{(-1)^{\dim \sigma}}{1 - t^{a_i}}} \, .
\end{eqnarray}
This implies the second equality in (\ref{eq:P}).

\smallskip

Let us prove the equality (\ref{eq:ai}).
By Theorem \ref{t:main} and (\ref{eq:P}), we have:
$$
	 E_{\rm st} (X ; u ,v ) =  E(G/H ; u,v) P(R_\Sigma^w, (u v)^{-1}) .
$$
By the Poincar\'e duality \cite[Theorem 3.7]{Ba98},
we have
\begin{eqnarray*}
	&& (uv)^{\dim X} E_{\rm st} (X ; u^{-1} ,v^{-1} )
		 = E_{\rm st} (X ;u,v), \\
	&& (uv)^{\dim G/P} E (G/P ; u^{-1} ,v^{-1} )
		= E (G/P ;u,v) \, .
\end{eqnarray*}
The above equalities imply:
\begin{eqnarray*}
	 E_{\rm st} (X ; u ,v )  & = & (uv)^{\dim X} E_{\rm st} (X ; u^{-1} ,v^{-1} ) \\
	 & = &  (uv)^{\dim X} E(G/H ; u^{-1},v^{-1})  P(R_\Sigma^w, u v) \\
	& = &(uv)^{\dim G/P} E(G/P ; u^{-1},v^{-1}) (uv)^{r} ((uv)^{-1} -1)^r P(R_\Sigma^w, u v) \\
	 & = &  E(G/P ; u ,v) (uv-1)^r (-1)^r P(R_\Sigma^w, u v) \\
	& = & E(G/H ; u,v) (-1)^r P(R_\Sigma^w, u v)\, .
\end{eqnarray*}

\end{proof}

\begin{ex}

1) Consider the locally factorial
completion $\overline{Q}$ of the affine 5-dimensional quadric $Q$ in Example \ref{ex:Q};
$\overline{Q}$ is a singular projective quadric.
The colored fan $\overline{\Sigma}$ of $\overline{Q}$ is represented in Figure \ref{fig:1}
and  the positive integer $a_i = -\omega_{\overline{Q}}(e_i)$ ($i=1,2,3$)
is written down near to  the integral point $e_i$.
The circles stand for the colors $\varrho_\alpha$,
$\alpha \in \F$.

\begin{figure}[htb]
{\setlength{\unitlength}{0.1in}

\begin{center}
\begin{picture}(8,8)(0,0)

\put(4,4){\line(1,0){4}}
\put(4,4){\line(0,1){4}}
\put(4,4){\line(-1,-1){4}}

\put(6,4){\circle{0.8}}
\put(4,6){\circle{0.8}}

\put(6,4){\circle*{0.4}}
\put(4,6){\circle*{0.4}}

\put(2,2){\circle*{0.4}}

\put(1,1.9){\tiny$1$}
\put(6.1,2.8){\tiny$2$}
\put(2.8,5.9){\tiny$2$}

%\qbezier[15](0,0)(4,0)(8,0)
\qbezier[15](0,2)(4,2)(8,2)
\qbezier[15](0,4)(4,4)(8,4)
\qbezier[15](0,6)(4,6)(8,6)
%\qbezier[15](0,8)(4,8)(8,8)

%\qbezier[15](0,0)(0,4)(0,8)
\qbezier[15](2,0)(2,4)(2,8)
\qbezier[15](4,0)(4,4)(4,8)
\qbezier[15](6,0)(6,4)(6,8)
%\qbezier[15](8,0)(8,4)(8,8)

\qbezier[10](5,4)(4.5,4.5)(4,5)
\qbezier[20](6,4)(5,5)(4,6)
\qbezier[30](7,4)(5.5,5.5)(4,7)
\qbezier[40](8,4)(6,6)(4,8)
\qbezier[30](8,5)(6.5,6.5)(5,8)
\qbezier[20](8,6)(7,7)(6,8)
\qbezier[10](8,7)(7.5,7.5)(7,8)

\qbezier[40](4,4)(4,2)(4,0)
\qbezier[40](4.5,4)(4.5,2)(4.5,0)
\qbezier[40](5,4)(5,2)(5,0)
\qbezier[40](5.5,4)(5.5,2)(5.5,0)
\qbezier[40](6,4)(6,2)(6,0)
\qbezier[40](6.5,4)(6.5,2)(6.5,0)
\qbezier[40](7,4)(7,2)(7,0)
\qbezier[40](7.5,4)(7.5,2)(7.5,0)
\qbezier[40](8,4)(8,2)(8,0)

\qbezier[35](3.5,3.5)(3.5,1.75)(3.5,0)
\qbezier[30](3,3)(3,1.5)(3,0)
\qbezier[25](2.5,2.5)(2.5,1.25)(2.5,0)
\qbezier[15](2,2)(2,1)(2,0)
\qbezier[10](1.5,1.5)(1.5,.75)(1.5,0)
\qbezier[5](1,1)(1,0.5)(1,0)
\qbezier[3](0.5,0.5)(0.5,0.25)(0.5,0)

\qbezier[40](4,4)(2,4)(0,4)
\qbezier[40](4,4.5)(2,4.5)(0,4.5)
\qbezier[40](4,5)(2,5)(0,5)
\qbezier[40](4,5.5)(2,5.5)(0,5.5)
\qbezier[40](4,6)(2,6)(0,6)
\qbezier[40](4,6.5)(2,6.5)(0,6.5)
\qbezier[40](4,7)(2,7)(0,7)
\qbezier[40](4,7.5)(2,7.5)(0,7.5)
\qbezier[40](4,8)(2,8)(0,8)

\qbezier[35](3.5,3.5)(1.75,3.5)(0,3.5)
\qbezier[30](3,3)(1.5,3)(0,3)
\qbezier[25](2.5,2.5)(1.25,2.5)(0,2.5)
\qbezier[15](2,2)(1,2)(0,2)
\qbezier[10](1.5,1.5)(.75,1.5)(0,1.5)
\qbezier[5](1,1)(0.5,1)(0,1)
\qbezier[3](0.5,0.5)(0.25,0.5)(0,0.5)

\end{picture}
\end{center}}

\caption{The colored fan $\overline{\Sigma}$ of $\overline{Q}$} \label{fig:1}
\end{figure}
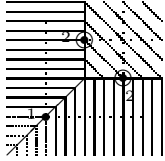

\noindent
The Stanley-Reisner ring is $R_{\overline{\Sigma}} \simeq \C[z_1,z_2,z_3]/ {(z_1 z_2 z_3)}$
and we have
$$
	P (R_{\overline{\Sigma}}^w , t) =  \displaystyle{\frac{1 - t^5 }{ (1-t) (1-t^2)^2}}.
$$
Hence, by Proposition \ref{p:SR}, we get
$$
	E_{\rm st}(\overline{Q} ; u,v )
	= \displaystyle{\frac{(1 + uv + (uv)^2)(1+ uv +(uv)^2+(uv)^3) }{ (1+ uv)}} \, .
$$

\smallskip

2) Consider the locally factorial
completion $\overline{X}$ of the affine 7-dimensional cone $X$ over the Grassmannian
$G(2,5)$ from Example \ref{ex:Grass}; $\overline{X}$ is the projective cone over
the Grassmannian $G(2,5)$.
The colored fan $\overline{\Sigma}$ of $\overline{X}$ is represented in Figure \ref{fig:2}.

\begin{figure}[htb]
{\setlength{\unitlength}{0.1in}

\begin{center}
\begin{picture}(8,8)(0,0)

\put(4,4){\line(1,0){4}}
\put(4,4){\line(0,1){4}}
\put(4,4){\line(-1,-1){4}}

\put(6,4){\circle{0.8}}
\put(4,6){\circle{0.8}}

\put(6,4){\circle*{0.4}}
\put(4,6){\circle*{0.4}}

\put(2,2){\circle*{0.4}}

\put(1,1.9){\tiny$1$}
\put(6.1,2.8){\tiny$2$}
\put(2.8,5.9){\tiny$3$}

%\qbezier[15](0,0)(4,0)(8,0)
\qbezier[15](0,2)(4,2)(8,2)
\qbezier[15](0,4)(4,4)(8,4)
\qbezier[15](0,6)(4,6)(8,6)
%\qbezier[15](0,8)(4,8)(8,8)

%\qbezier[15](0,0)(0,4)(0,8)
\qbezier[15](2,0)(2,4)(2,8)
\qbezier[15](4,0)(4,4)(4,8)
\qbezier[15](6,0)(6,4)(6,8)
%\qbezier[15](8,0)(8,4)(8,8)

\qbezier[10](5,4)(4.5,4.5)(4,5)
\qbezier[20](6,4)(5,5)(4,6)
\qbezier[30](7,4)(5.5,5.5)(4,7)
\qbezier[40](8,4)(6,6)(4,8)
\qbezier[30](8,5)(6.5,6.5)(5,8)
\qbezier[20](8,6)(7,7)(6,8)
\qbezier[10](8,7)(7.5,7.5)(7,8)

\qbezier[40](4,4)(4,2)(4,0)
\qbezier[40](4.5,4)(4.5,2)(4.5,0)
\qbezier[40](5,4)(5,2)(5,0)
\qbezier[40](5.5,4)(5.5,2)(5.5,0)
\qbezier[40](6,4)(6,2)(6,0)
\qbezier[40](6.5,4)(6.5,2)(6.5,0)
\qbezier[40](7,4)(7,2)(7,0)
\qbezier[40](7.5,4)(7.5,2)(7.5,0)
\qbezier[40](8,4)(8,2)(8,0)

\qbezier[35](3.5,3.5)(3.5,1.75)(3.5,0)
\qbezier[30](3,3)(3,1.5)(3,0)
\qbezier[25](2.5,2.5)(2.5,1.25)(2.5,0)
\qbezier[15](2,2)(2,1)(2,0)
\qbezier[10](1.5,1.5)(1.5,.75)(1.5,0)
\qbezier[5](1,1)(1,0.5)(1,0)
\qbezier[3](0.5,0.5)(0.5,0.25)(0.5,0)

\qbezier[40](4,4)(2,4)(0,4)
\qbezier[40](4,4.5)(2,4.5)(0,4.5)
\qbezier[40](4,5)(2,5)(0,5)
\qbezier[40](4,5.5)(2,5.5)(0,5.5)
\qbezier[40](4,6)(2,6)(0,6)
\qbezier[40](4,6.5)(2,6.5)(0,6.5)
\qbezier[40](4,7)(2,7)(0,7)
\qbezier[40](4,7.5)(2,7.5)(0,7.5)
\qbezier[40](4,8)(2,8)(0,8)

\qbezier[35](3.5,3.5)(1.75,3.5)(0,3.5)
\qbezier[30](3,3)(1.5,3)(0,3)
\qbezier[25](2.5,2.5)(1.25,2.5)(0,2.5)
\qbezier[15](2,2)(1,2)(0,2)
\qbezier[10](1.5,1.5)(.75,1.5)(0,1.5)
\qbezier[5](1,1)(0.5,1)(0,1)
\qbezier[3](0.5,0.5)(0.25,0.5)(0,0.5)

\end{picture}
\end{center}}

\caption{The colored fan $\overline{\Sigma}$ of $\overline{X}$} \label{fig:2}
\end{figure}
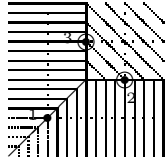

\noindent
We have,
$$
	P (R_{\overline{\Sigma}}^w , t) =  \displaystyle{\frac{1 - t^6 }{ (1-t)(1-t^2)(1-t^3)}} \, ,
$$
and
$$
	E_{\rm st}(\overline{X} ; u,v ) = (1 + (uv)^2)(1 + uv + (uv)^2 + (uv)^3 + (uv)^4 + (uv)^5) .
$$

\end{ex}

\smallskip

It would be interesting to compute the cohomology ring $H^\ast (X_\Sigma , \C)$
of an arbitrary smooth projective horospherical variety $X_\Sigma$
defined by a colored fan $\Sigma$.
If $X_\Sigma$ is a toroidal horospherical variety, then $X_\Sigma$ is a toric bundle
over $G/P$, and  a general result of Sankaran
and Uma \cite[Theorem 1.2]{SU03} implies the following description
of the cohomology ring of $X_\Sigma$:

\begin{prop}    \label{p:coh}

Let $X_\Sigma$ be a smooth projective toroidal horospherical variety
defined by a (uncolored) fan $\Sigma$.
Then the cohomology ring $H^\ast (X_\Sigma , \C)$ is isomorphic
to the quotient of  $H^\ast (G/P ,\C) \otimes_\C R_\Sigma$
by the ideal generated by the regular sequences $f_1,\ldots,f_r$
where $f_i $ is given by
$$
	f_i :=  \delta(m_i) \otimes 1 + 1\otimes  \sum_{j=1}^s \langle m_i , e_j \rangle
		\in \big( H^2 (X, \C) \otimes R_\Sigma^0 \big) \oplus
		\big( H^0 (X, \C) \otimes R_\Sigma^1 \big) \, ,
$$
for some integral basis $\{ m_1, \ldots, m_r \}$ of the lattice $M$.

\end{prop}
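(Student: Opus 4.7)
The plan is to reduce the statement to a direct application of the Sankaran--Uma theorem \cite[Theorem 1.2]{SU03} on the cohomology ring of a smooth toric fiber bundle over a complex manifold, and to verify that the regular sequence they exhibit matches the one given in our formula.

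First I would identify $X_\Sigma$ with the associated bundle $G \times_P Y_\Sigma \to G/P$, where $Y_\Sigma$ is the smooth toric $T$-embedding associated with the uncolored fan $\Sigma$, as explained after Theorem~\ref{t:bij}. Since $X_\Sigma$ is smooth and projective, so are its fibers, and $Y_\Sigma$ is complete and smooth. The $P$-action on $Y_\Sigma$ factors through the quotient $T = P/H$, so $X_\Sigma \to G/P$ is a toric bundle with fiber $Y_\Sigma$ and structure group $T$ in the sense of Sankaran--Uma. This places us exactly in the setting of their theorem.

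Next I would spell out the dictionary between our data and theirs. The toric fiber has cohomology $H^*(Y_\Sigma, \C) \cong R_\Sigma/(g_1,\ldots, g_r)$ with $g_i = \sum_{j=1}^s \langle m_i, e_j\rangle z_j$ for any $\Z$-basis $\{m_1,\ldots, m_r\}$ of $M$, where $z_j$ is the class of the toric divisor corresponding to $e_j$. In the globalization, each $m_i \in M \hookrightarrow \mathcal{X}(P)$ determines a line bundle on $G/P$ whose first Chern class is precisely $\delta(m_i) \in H^2(G/P, \C)$; this is the Chern class of the associated line bundle $G \times_P \C_{m_i}$ restricted to the base. The Sankaran--Uma theorem then asserts that $H^*(X_\Sigma, \C)$ is the quotient of $H^*(G/P, \C) \otimes_\C R_\Sigma$ by the ideal generated by the classes $\delta(m_i) \otimes 1 + 1 \otimes g_i$, which is exactly our $f_i$. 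The underlying geometric reason is that, for any character $m \in M$, the rational function $\chi^m$ on a fiber extends to a rational section of the pullback of $L_m$ over $X_\Sigma$, whose divisor is $\sum_j \langle m, e_j\rangle D_j$; this linear equivalence forces $\delta(m_i) + \sum_j \langle m_i, e_j\rangle z_j = 0$ in $H^2(X_\Sigma, \C)$.

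Finally I would verify that the $f_i$ form a regular sequence in $H^*(G/P, \C) \otimes_\C R_\Sigma$. Since the $g_i$ form a regular sequence in $R_\Sigma$ (a classical fact for complete simplicial toric varieties, which here are smooth), and since $H^*(G/P, \C)$ is a free $\C$-module with a homogeneous basis concentrated in even degrees, a direct Koszul-type argument (or simply a Hilbert/Poincar\'e series comparison) shows that the twisted sequence $f_i$ is again regular. The main obstacle in writing this up is bookkeeping: one needs to check that the toric bundle hypotheses of Sankaran--Uma are met verbatim (in particular, $X_\Sigma \to G/P$ is Zariski-locally trivial over the big cell and globally a fiber bundle in the analytic topology with structure group $T$, so Leray--Hirsch applies), and that the identification $c_1(L_{m_i}) = \delta(m_i)$ is the correct one under all our sign conventions; once these verifications are in place, the statement follows formally.
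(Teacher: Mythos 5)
Your proposal is correct and takes essentially the same route as the paper: the paper offers no separate proof, obtaining the proposition directly from the toric-bundle structure $X_\Sigma \simeq (G\times Y_\Sigma)/P$ over $G/P$ together with the Sankaran--Uma theorem \cite[Theorem 1.2]{SU03}, exactly as you do. Your extra verifications (identifying $\delta(m_i)$ with the Chern class of the line bundle attached to the character $m_i$, and checking regularity of $f_1,\ldots,f_r$) merely make explicit what the paper leaves to the cited reference.
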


Together with Proposition \ref{p:coh}, our formula (\ref{eq:ai}) in Proposition \ref{p:SR} motivates the following
question:

\begin{question}        \label{q:coh}

Does there exist an analogous description of the cohomology
ring of an arbitrary smooth projective horospherical variety
defined by a colored fan $\Sigma$
which involves the weighted Stanley-Reisner ring $R_\Sigma^w$?

\end{question}

Another interesting question is motivated by Theorem \ref{t:main}:

\begin{question}     \label{q:st}

How to compute  $E_{\rm st}(X ; u,v)$
for an arbitrary $\Q$-Gorenstein spherical $G/H$-embedding?

\end{question}

\begin{rem}
We hope that there is  a formula for $E_{\rm st}(X ; u,v)$ similar to the one
in the horospherical case, e.g.,  which involves
the summation of $(uv)^{\omega_X(n)}$
over all lattice points
in the valuation cone $\mathcal{V}(G/H)$
of the spherical homogeneous space $G/H$.
\end{rem}

A smoothness criterion for arbitrary spherical varieties
was obtained by M. Brion in \cite{Br91}.
Unfortunately, this criterion is difficult to apply in practice.
We expect that the smoothness criterion
for locally factorial horospherical varieties (see Theorem \ref{t:smo})
can be extended to arbitrary locally factorial spherical varieties:

\begin{conj}       \label{c:smo}

Let $X$ be a locally factorial spherical $G/H$-embedding
whose closed orbits are projective.
Then one has $e_{\rm st}(X) \ge e (X)$, and
the equality holds if and only if $X$ is smooth.

\end{conj}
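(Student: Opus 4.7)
The plan is to mimic the proof of Theorem \ref{t:smo}. Since smoothness is a local condition and both $e_{\rm st}$ and $e$ decompose additively under the covering of $X$ by simple open sub-embeddings $X_Z$, I would first reduce to the case where $X$ is simple with a single projective closed $G$-orbit $Z$ associated with a maximal colored cone $(\sigma,\F)$.

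The first step is to establish a spherical analog of Theorem \ref{t:main} (i.e.\ answer Question \ref{q:st}). Following the strategy of Sections \ref{S:Arc}--\ref{S:Est}, one would construct a bijection between $G(\O)$-orbits in $X(\O)\cap(G/H)(\K)$ and lattice points in $|\Sigma|\cap\mathcal{V}(G/H)\cap N$, where $\mathcal{V}(G/H)$ is the valuation cone of $G/H$, and compute the motivic measure of each such orbit along the lines of Theorem \ref{t:meas}. Combined with Brion's canonical class formula for spherical varieties (the analog of Proposition \ref{p:KX}), this should yield
\[
\E_{\rm st}(X)\;=\;[G/H]\sum_{n\in|\Sigma|\cap\mathcal{V}(G/H)\cap N}\L^{\omega_X(n)}
\]
for a suitable $\Sigma$-piecewise linear function $\omega_X$ encoding the discrepancies of a toroidal-type resolution.

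The second step would use the local structure theorem of Brion-Luna-Vust to generalize Proposition \ref{p:orb}, identifying $Z\simeq G/H_{\sigma,\F}$ with normalizer a parabolic $P_J$ for some subset $J\subseteq S$ depending on $I$, $\F$ and the spherical root data of $G/H$. As in the horospherical case, the usual Euler number of a nontrivial spherical homogeneous space vanishes unless it is of wonderful type, so only the closed orbit contributes and $e(X)=e(G/P_J)=|W_S|/|W_J|$. Specializing the first-step formula at $u=v=1$ as in Proposition \ref{p:Eul} should give $e_{\rm st}(X)=(|W_S|/|W_I|)\cdot C(\sigma,\F)$ for a combinatorial factor $C(\sigma,\F)$ depending on the primitive generators of $\sigma$ and their multiplicities $a_i:=-\omega_X(e_i)$. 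The conjecture would then reduce to the inequality $|W_I|\cdot C(\sigma,\F)\le|W_J|$, with equality characterizing Brion's smoothness criterion for spherical varieties, to be verified by an analog of Lemmas \ref{l:root} and \ref{l:eq}.

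The main obstacle is the first step. For horospherical varieties the identification $P/H=T$ transports the arc-space analysis to the toric case via Lemma \ref{l:lattX}, and the discrepancies along $G$-stable divisors are explicit thanks to the fibration $G/H\to G/P$. For general spherical $G/H$, the valuation cone $\mathcal{V}(G/H)$ is only a strictly convex proper cone in $N_\R$ (governed by the little Weyl group of $G/H$), and the $G(\O)$-orbit structure on $(G/H)(\K)$ — studied in related contexts by Gaitsgory-Nadler and Sakellaridis-Venkatesh — is substantially more delicate; in particular the indexing by lattice points is less clean, and the measure of an orbit need not factor as cleanly as $[G/H]\,\L^{\omega_X(n)}$. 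A secondary difficulty lies in the combinatorial step: the case-by-case analysis of Lemma \ref{l:root} must be extended to cover spherical roots of non-horospherical type, where contributions from the little Weyl group action may perturb the product structure of the multiplicities $a_\alpha$ that made the horospherical inequality tight exactly on the Pauer-Pasquier list.
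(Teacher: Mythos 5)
You are addressing Conjecture \ref{c:smo}, which the paper does not prove: it is left open precisely because the two pillars of your plan are open problems, so what you have is a program rather than a proof. Your first step is exactly Question \ref{q:st}. The parametrization of $G(\O)$-orbits in $(G/H)(\K)$ by lattice points of $\mathcal{V}(G/H)\cap N$ is indeed available (Luna--Vust, Gaitsgory--Nadler), but the factorization of the motivic measure of each orbit as $[G/H]\,\L^{\omega_X(n)}$ --- the heart of Theorem \ref{t:meas} and Lemma \ref{l:main2} --- genuinely fails beyond the horospherical case. The horospherical proof uses that the toroidal model fibers over $G/P$ with toric fibers, so each orbit is a cylinder whose $m$-base is a jet scheme of $G/P$ times $(\A\smallsetminus 0)^r\times\A^{\,qr-\sum\langle n,u_j\rangle}$; for a general spherical $G/H$ (already for $G$ viewed as a $G\times G$-variety) the class of the $m$-base attached to $n$ depends on $n$ through the geometry of the corresponding stratum of the loop space (affine-Grassmannian-type strata, affine bundles over \emph{varying} partial flag varieties), not merely through a power of $\L$. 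So the formula $\E_{\rm st}(X)=[G/H]\sum_{n}\L^{\omega_X(n)}$ cannot be expected verbatim, and the paper itself only expresses the hope, in the remark following Question \ref{q:st}, that some formula of this flavour exists.

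Your second step contains an outright error: the reduction $e(X)=e(G/P_J)$ rests on the claim that the usual Euler number of a spherical homogeneous space of positive rank vanishes. This is true for horospherical orbits (they fiber in tori over flag varieties), which is what makes Proposition \ref{p:Eul} work, but it is false in general: $SL_2/T$ is spherical of rank one with Euler number $2$. Hence for a general spherical $X$ the non-closed orbits can contribute to $e(X)$, the target of the conjectured inequality is not $|W_S|/|W_J|$, and the combinatorial reduction you propose as an analogue of Lemma \ref{l:eq} does not yet have a well-defined statement to prove. In short, the strategy is the natural one --- it mirrors the proof of Theorem \ref{t:smo} --- but both the motivic-integration step and the Euler-number step break exactly where horospherical structure was used, which is why the statement remains a conjecture in the paper.
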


\end{document}